\newtheorem{lemma}{Lemma}[section]
\newtheorem{proposition}[lemma]{Proposition}
\newtheorem{theorem}[lemma]{Theorem}
\newtheorem{corollary}[lemma]{Corollary}
\theoremstyle{definition}
\newtheorem{remark}[lemma]{Remark}
\newtheorem{definition}[lemma]{Definition}
\newtheorem{example}[lemma]{Example}
\newtheorem{question}[lemma]{Question}
\DeclareMathOperator{\id}{id}
\DeclareMathOperator{\chr}{char}
\DeclareMathOperator{\im}{im}
\DeclareMathOperator{\Hom}{Hom}
\DeclareMathOperator{\KR}{KR}
\newcommand{\Z}{\mathbb{Z}}
\newcommand{\Q}{\mathbb{Q}}
\newcommand{\R}{\mathbb{R}}
\newcommand{\C}{\mathbb{C}}
\newcommand{\F}{\mathbb{F}}
\newcommand{\K}{\mathbb{K}}
\newcommand{\Fr}{\mathcal{F}}
\newcommand{\FrMV}{\mathcal{F}_{\mathrm{MV}}}
\newcommand{\FrK}{\mathcal{F}_{\mathrm{Kh}}}
\newcommand{\Frof}{\mathcal{F}_{\bar{f}}}
\newcommand{\fa}{\mathbf{a}}
\newcommand{\fb}{\mathbf{b}}
\newcommand{\fc}{\mathbf{c}}
\newcommand{\of}{\bar{f}}
\newcommand{\ZMV}{\Z[\fa, \fb, \fc]}
\newcommand{\ZMVX}{\Z[\fa, \fb, \fc, X]}
\newcommand{\Fm}{\mathbf{Foam}}
\newcommand{\Csl}{C_{\mathfrak{sl}_3}}
\newcommand{\Hsl}{H_{\mathfrak{sl}_3}}
\newcommand{\pcrossing}[3] {
\draw[->, thick, >=latex] (#1, #2) -- (#1+#3, #2+#3);
\draw[-, thick, >=latex] (#1+#3, #2) -- (#1+ 0.6 * #3, #2 + 0.4 * #3);
\draw[->, thick, >=latex] (#1+ 0.4 * #3, #2 + 0.6 * #3) -- (#1, #2+#3);
}
\newcommand{\mcrossing}[3] {
\draw[->, thick, >=latex] (#1+#3, #2) -- (#1, #2+#3);
\draw[-, thick, >=latex] (#1, #2) -- (#1+ 0.4 * #3, #2 + 0.4 * #3);
\draw[->, thick, >=latex] (#1+ 0.6 * #3, #2 + 0.6 * #3) -- (#1+#3, #2+#3);
}
\newcommand{\jcrossing}[3] {
\draw[->, thick, >=latex] (#1+#3, #2) -- (#1, #2+#3);
\draw[->, thick, >=latex] (#1, #2) -- (#1+#3, #2+#3);
}
\newcommand{\smoothing}[4] {
\draw[#4, thick, >=latex] (#1, #2) to [out = 45, in = 315] (#1, #2+#3);
\draw[#4, thick, >=latex] (#1+#3, #2) to [out = 135, in = 225] (#1+#3, #2+#3);
}
\newcommand{\spidering}[4] {
\draw[#4, thick, >=latex] (#1, #2) -- (#1+0.5*#3, #2+0.25*#3);
\draw[#4, thick, >=latex] (#1+#3, #2) -- (#1+0.5*#3, #2+0.25*#3);
\draw[#4, thick, >=latex] (#1+0.5*#3, #2+0.75*#3) -- (#1, #2+#3);
\draw[#4, thick, >=latex] (#1+0.5*#3, #2+0.75*#3) -- (#1+#3, #2+#3);
\draw[#4, thick, >=latex] (#1+0.5*#3, #2+0.75*#3) -- (#1+0.5*#3, #2+0.25*#3);
}
\newcommand{\parallelbox}[4] {
\shade[color = gray!40, opacity = 0.3] (#1,#2) -- (#1 + #3, #2) -- (#1 + #3 + 0.3 * #4, #2 + #4) -- (#1 + 0.3 * #4, #2 + #4) -- (#1, #2);
\draw[-] (#1,#2) -- (#1 + #3, #2) -- (#1 + #3 + 0.3 * #4, #2 + #4) -- (#1 + 0.3 * #4, #2 + #4) -- (#1, #2);
}
\newcommand{\cylinder}[4]{
\shade[ball color = gray!40, opacity = 0.3] (#1,#2) arc (180:360:0.4*#3 and 0.2*#3) -- (#1+0.8*#3,#2+#4*#3) 
arc (0:180:0.4*#3 and -0.2*#3);
\shade[ball color = gray!40, opacity = 0.15] (#1,#2+#4*#3) arc (180:360:0.4*#3 and -0.2*#3) arc (0:180:0.4*#3 and -0.2*#3);
\draw (#1,#2) arc (180:360:0.4*#3 and 0.2*#3) -- (#1+0.8*#3,#2+#4*#3) 
arc (0:180:0.4*#3 and 0.2*#3) -- (#1,#2);
\draw[dashed] (#1+0.8*#3,#2) arc (0:180:0.4*#3 and 0.2*#3);
\draw (#1,#2+#4*#3) arc (180:360:0.4*#3 and 0.2*#3);
}
\newcommand{\cylinderd}[4]{
\shade[ball color = gray!40, opacity = 0.3] (#1,#2) arc (180:360:0.4*#3 and 0.2*#3) -- (#1+0.8*#3,#2+#4*#3) 
arc (0:180:0.4*#3 and -0.2*#3);
\shade[ball color = gray!40, opacity = 0.15] (#1,#2+#4*#3) arc (180:360:0.4*#3 and -0.2*#3) arc (0:180:0.4*#3 and -0.2*#3);
\draw (#1+0.8*#3, #2) -- (#1+0.8*#3,#2+#4*#3) arc (0:180:0.4*#3 and 0.2*#3) -- (#1,#2);
\draw (#1,#2+#4*#3) arc (180:360:0.4*#3 and 0.2*#3);
}
\newcommand{\cylindert}[4]{
\shade[ball color = gray!40, opacity = 0.3] (#1,#2) arc (180:360:0.4*#3 and 0.2*#3) -- (#1+0.8*#3,#2+#4*#3) 
arc (0:180:0.4*#3 and -0.2*#3);
\draw (#1, #2+#4 * #3) -- (#1,#2) arc (180:360:0.4*#3 and 0.2*#3) -- (#1+0.8*#3,#2+#4*#3) ;
\draw[dashed] (#1+0.8*#3,#2) arc (0:180:0.4*#3 and 0.2*#3);
}
\newcommand{\bowl}[4]{
\shade[ball color = gray!40, opacity = 0.3] (#1+0.8*#3,#2) arc (0:180:0.4*#3 and -0.2*#3) -- (#1,#2-#4*#3) arc (180:360:0.4*#3 and 0.4*#3) -- (#1+0.8*#3,#2);
\draw (#1+0.8*#3,#2) arc (0:180:0.4*#3 and -0.2*#3) -- (#1,#2-#4*#3) arc (180:360:0.4*#3 and 0.4*#3) -- (#1+0.8*#3,#2);
\draw (#1,#2) arc (180:360:0.4*#3 and -0.2*#3);
\shade[ball color = gray!40, opacity = 0.15] (#1,#2) arc (180:360:0.4*#3 and -0.2*#3) arc (0:180:0.4*#3 and -0.2*#3);
}
\newcommand{\bowludd}[4]{
\shade[ball color = gray!40, opacity = 0.3] (#1,#2) arc (180:360:0.4*#3 and 0.2*#3) -- (#1+0.8*#3,#2+#4*#3) arc (0:180:0.4*#3 and 0.4*#3) -- (#1,#2);
\draw (#1,#2) arc (180:360:0.4*#3 and 0.2*#3) -- (#1+0.8*#3,#2+#4*#3) arc (0:180:0.4*#3 and 0.4*#3) -- (#1,#2);
}
\newcommand{\bowlud}[4]{
\shade[ball color = gray!40, opacity = 0.3] (#1,#2) arc (180:360:0.4*#3 and 0.2*#3) -- (#1+0.8*#3,#2+#4*#3) arc (0:180:0.4*#3 and 0.4*#3) -- (#1,#2);
\draw (#1,#2) arc (180:360:0.4*#3 and 0.2*#3) -- (#1+0.8*#3,#2+#4*#3) arc (0:180:0.4*#3 and 0.4*#3) -- (#1,#2);
\draw[dashed] (#1+0.8*#3,#2) arc (0:180:0.4*#3 and 0.2*#3);
}
\newcommand{\sphere}[3]{
\shade[ball color = gray!40, opacity = 0.3] ({#1},{#2}) circle ({#3});
\draw (#1,#2) circle ({#3});
\draw (#1-#3,#2) arc (180:360:#3 and 0.3*#3);
\draw[dashed] (#1+#3,#2) arc (0:180:#3 and 0.3*#3);
}
\newcommand{\thetafoam}[3] {
\fill[color = gray, opacity = 0.7] (#1-#3,#2) arc (180:360:#3 and 0.3*#3) arc (0:180:#3 and 0.3*#3);
\shade[ball color = gray!40, opacity = 0.3] ({#1},{#2}) circle ({#3});
\draw (#1,#2) circle ({#3});
\draw[thick] (#1-#3,#2) arc (180:360:#3 and 0.3*#3);
\draw[dashed, thick] (#1+#3,#2) arc (0:180:#3 and 0.3*#3);
\draw[<-, >=latex, thick] (#1-0.2*#3,#2-0.3 * #3) -- (#1, #2-0.3 * #3);
}
\newcommand{\tripod}[5] {
\shade[color = gray!40, opacity = 0.2] (#1,#2) -- (#1-0.5, #2+0.5) -- (#1-0.5, #2+1.5) -- (#1,#2+1);
\draw (#1,#2) -- (#1-0.5, #2+0.5) -- (#1-0.5, #2+1.5) -- (#1,#2+1);
\shade[color = gray!40, opacity = 0.7] (#1,#2) -- (#1-0.4, #2-0.65) -- (#1-0.4, #2+0.35) -- (#1,#2+1);
\draw (#1,#2) -- (#1-0.4, #2-0.65) -- (#1-0.4, #2+0.35) -- (#1,#2+1);
\shade[color = gray!40, opacity = 0.3] (#1,#2) -- (#1+0.7, #2+0.1) -- (#1+0.7, #2+1.1) -- (#1,#2+1);
\draw[very thick] (#1,#2) -- (#1,#2+1);

\draw (#1,#2) -- (#1+0.7, #2+0.1) -- (#1+0.7, #2+1.1) -- (#1,#2+1);
\ifnum#3=1
\node at (#1-0.23, #2+0.1) {$\bullet$};
\fi
\ifnum#4=1
\node at (#1-0.27, #2+0.75) {$\bullet$};
\fi
\ifnum#5=1
\node at (#1+0.35, #2+0.55) {$\bullet$};
\fi
}
\newcommand{\bubble} {
\bowl{0}{0}{1}{0.2}
\fill[color = gray, opacity = 0.4] (-0.65,-0.325) -- (1, -0.325) -- (1.5, 0.325) -- (-0.15, 0.325); 
\draw (-0.65,-0.325) -- (1, -0.325) -- (1.5, 0.325) -- (-0.15, 0.325) -- (-0.65,-0.325); 
\fill[color = white] (0.4, 0) ellipse (0.4 and 0.21);
\draw[thick, color = gray] (0,0) arc (180:360:0.4 and -0.2) -- (0.8,0);
\bowludd{0}{0}{1}{0.2}
\draw[thick] (0,0) arc (180:360:0.4 and 0.2) -- (0.8,0);
\draw[<-, thick] (0.35, -0.2) -- (0.37, -0.2);
}
\newcommand{\cylinderslice} {
\cylindert{0}{-0.8}{1}{0.8}
\fill[color = gray, opacity = 0.4] (-0.65,-0.325) -- (1, -0.325) -- (1.5, 0.325) -- (-0.15, 0.325); 
\draw (-0.65,-0.325) -- (1, -0.325) -- (1.5, 0.325) -- (-0.15, 0.325) -- (-0.65,-0.325); 
\fill[color = white] (0.4, 0) ellipse (0.4 and 0.2);
\draw[thick, color = gray] (0,0) arc (180:360:0.4 and -0.2) -- (0.8,0);
\cylinderd{0}{0}{1}{0.8}
\draw[thick] (0,0) arc (180:360:0.4 and 0.2) -- (0.8,0);
\draw[<-, thick] (0.35, -0.2) -- (0.37, -0.2);
}
\newcommand{\resolveslice} {
\bowlud{0}{-0.8}{1}{0}
\fill[color = gray, opacity = 0.4] (-0.65,-0.325) -- (1, -0.325) -- (1.5, 0.325) -- (-0.15, 0.325); 
\draw (-0.65,-0.325) -- (1, -0.325) -- (1.5, 0.325) -- (-0.15, 0.325) -- (-0.65,-0.325); 
\bowl{0}{0.8}{1}{0}
}
\newcommand{\resolvetop} {
\cylindert{0}{-1}{1}{1}
\fill[color = gray, opacity = 0.4] (-0.65,-0.325) -- (1, -0.325) -- (1.5, 0.325) -- (-0.15, 0.325); 
\draw (-0.65,-0.325) -- (1, -0.325) -- (1.5, 0.325) -- (-0.15, 0.325) -- (-0.65,-0.325); 
\fill[color = white] (0.4, 0) ellipse (0.4 and 0.2);
\draw[thick, color = gray] (0,0) arc (180:360:0.4 and -0.2) -- (0.8,0);
\bowludd{0}{0}{1}{0}
\bowl{0}{1}{1}{0}
\draw[thick] (0,0) arc (180:360:0.4 and 0.2) -- (0.8,0);
\draw[<-, thick] (0.35, -0.2) -- (0.37, -0.2);
}
\newcommand{\resolvebot} {
\bowlud{0}{-1}{1}{0}
\bowl{0}{0}{1}{0.1}
\fill[color = gray, opacity = 0.4] (-0.65,-0.325) -- (1, -0.325) -- (1.5, 0.325) -- (-0.15, 0.325); 
\draw (-0.65,-0.325) -- (1, -0.325) -- (1.5, 0.325) -- (-0.15, 0.325) -- (-0.65,-0.325); 
\fill[color = white] (0.4, 0) ellipse (0.4 and 0.2);
\draw[thick, color = gray] (0,0) arc (180:360:0.4 and -0.2) -- (0.8,0);
\bowludd{0}{0}{1}{0}
\bowl{0}{1}{1}{0}
\draw[thick] (0,0) arc (180:360:0.4 and 0.2) -- (0.8,0);
\draw[<-, thick] (0.35, -0.2) -- (0.37, -0.2);
}
\begin{document}
\parindent0em
\setlength\parskip{.1cm}
\thispagestyle{empty}
\title{On Variations of $S$-invariants from $\mathfrak{sl}_3$-link homology}
\author[Dirk Sch\"utz]{Dirk Sch\"utz}
\address{Department of Mathematical Sciences\\ Durham University\\ Durham DH1 3LE\\ United Kingdom}
\email{dirk.schuetz@durham.ac.uk}

\begin {abstract}
We use the Mackaay--Vaz universal $\mathfrak{sl}_3$-link homology to deepen the study of $s$-invariants on Khovanov's link homology associated to $\mathfrak{sl}_3$. Such $s$-invariants have already been studied by Lobb and Wu in characteristic $0$ and we show how to extend this to other characteristics, particularly to $p=3$. We also use Bar-Natan's scanning algorithm for efficient calculations of these invariants, and exhibit more examples of unusual behaviour that has been previously observed by Lewark--Lobb.
\end{abstract}

\maketitle

\section{Introduction}
Rasmussen's groundbreaking work \cite{MR2729272} on a lower bound for the slice genus of a knot coming from Khovanov homology \cite{MR1740682} has been generalized in many different directions. One of these directions was pursued independently by Lobb \cite{MR2554935} and Wu \cite{MR2509322} using Khovanov--Rozansky $\mathfrak{sl}_n$-link homology \cite{MR2391017} for $n\geq 3$.

Many of the formal properties of Rasmussen's $s$-invariant carry over to the setting using $\mathfrak{sl}_n$, $n\geq 3$, but one big difference lies in computability. While the $s$-invariant can easily be computed in bulk for knots with up to $20$ crossings, and even knots with $60$ crossings do not represent a serious difficulty, the same can certainly not be said for $\mathfrak{sl}_n$-link homology. However, for $n=3$ Lewark \cite{MR3248745} showed that the scanning technique of Bar-Natan \cite{MR2320156} can be adapted for fast calculations of $\mathfrak{sl}_3$-link homology as defined in \cite{MR2100691}, and for general $n$ this can be done provided the knot is {\em bipartite}, see Lewark--Lobb \cite{MR3458146}.

Whether a knot is bipartite or not is generally difficult to tell, but certain pretzel knots are, compare \cite{MR3458146}, and Lewark--Lobb were able to use these knots to show that the resulting $s$-invariants for $\mathfrak{sl}_n$ can be quite different for various $n$, and that other interesting phenomena appear for $n\geq 3$. One of these phenomena is that there are more spectral sequences starting from $\mathfrak{sl}_n$-link homology than one might expect, and that their $E_\infty$-pages can even lead to different slice obstructions.

In this paper, we focus on the case of $\mathfrak{sl}_3$-link homology. This allows us to perform reasonably fast computations without having to restrict ourselves to bipartite knots. Furthermore, Lewark \cite{MR3248745} restricted himself to computations of the standard $\mathfrak{sl}_3$-link homology, making $s$-invariant computations somewhat reliant on being able to infer the spectral sequence from the homology.  

The calculations of $s$-invariants in \cite{MR3248745, MR3458146} are done in characteristic $0$, but recent results for standard Khovanov homology, compare \cite{LewarkZib, MR4873797, dunfield2024, lewark2024new}, suggest that different characteristics should also be of interest for $\mathfrak{sl}_3$-link homology. The extension to characteristics $p\not=3$ is straightforward, but we also get a homomorphism $s^3_{\mathfrak{sl}_3}\colon \mathfrak{C}\to \Z$ in this case, where $\mathfrak{C}$ is the smooth concordance group of knots.

\begin{theorem}\label{thm:first_main}
The homomorphism $s^3_{\mathfrak{sl}_3}\colon \mathfrak{C}\to \Z$ satisfies
\begin{enumerate}
\item $g_4(K) \geq |s^3_{\mathfrak{sl}_3}(K)|/2$ for each knot $K$, where $g_4$ refers to the $4$-genus of $K$.
\item $s^3_{\mathfrak{sl}_3}(T(p,q)) = (p-1)(q-1)$ for the $(p,q)$-torus knot $T(p,q)$.
\item The image of $s^3_{\mathfrak{sl}_3}$ is generated by the image of $T(2,3)$.
\end{enumerate}
\end{theorem}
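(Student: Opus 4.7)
My plan is to follow the template laid out by Lobb and Wu in characteristic zero, tracking carefully which ingredients survive the specialization of the Mackaay--Vaz universal $\mathfrak{sl}_3$-link homology to characteristic $3$. First I would verify that $s^3_{\mathfrak{sl}_3}$ descends to a homomorphism $\mathfrak{C}\to\Z$, using the fact that an oriented cobordism $\Sigma\colon K_0\to K_1$ of genus $g$ induces a chain map on the Mackaay--Vaz complex whose quantum filtration shift is at most $-2g$; this is the usual Lee--Rasmussen estimate, now run inside the foam TQFT at the appropriate $p=3$ specialization. Concordances then give filtered isomorphisms on the reduced summand, and additivity under connected sum follows from the K\"unneth formula for the underlying Frobenius algebra. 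Statement~(1) is then immediate, since a slice surface for $K$ of genus $g$ gives a cobordism $K\to U$ and $s^3_{\mathfrak{sl}_3}(U)=0$, so $|s^3_{\mathfrak{sl}_3}(K)|\leq 2g$.

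For (2), the upper bound $s^3_{\mathfrak{sl}_3}(T(p,q)) \leq (p-1)(q-1)$ is just (1) combined with the Milnor conjecture $g_4(T(p,q))=(p-1)(q-1)/2$. The matching lower bound is the delicate step. I would take the standard positive braid diagram of $T(p,q)$ on $p$ strands with $q(p-1)$ positive crossings, and analyze the canonical cycle supported on its oriented resolution. For a positive braid the Seifert state is a disjoint union of $p$ nested circles, and a short computation of foam gradings identifies a distinguished cycle sitting in quantum filtration $(p-1)(q-1)$; the Lobb--Wu argument that this cycle survives to the $E_\infty$-page then yields $s^3_{\mathfrak{sl}_3}(T(p,q))\geq (p-1)(q-1)$. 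Part~(3) reduces to parity: the quantum grading on $\mathfrak{sl}_3$-link homology for knots takes only even values, so the values of $s^3_{\mathfrak{sl}_3}$ are even, and since (2) gives $s^3_{\mathfrak{sl}_3}(T(2,3))=2$, the image is exactly $2\Z$.

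The main obstacle I anticipate is the positive-braid lower bound at $p=3$. In characteristic zero a Gornik-type deformation splits the homology along the distinct roots of the universal potential and pins down the canonical generator explicitly, but at $p=3$ the roots of $X^3-1$ collapse and this decomposition degenerates. My plan is to work directly inside the Mackaay--Vaz complex, using the foam relations that remain available at the $p=3$ specialization to show that the canonical cycle on the Seifert state is still nonzero on the $E_\infty$-page in the predicted filtration. If this direct analysis runs into trouble, a fallback is to transfer a lower bound from the characteristic zero invariant via a filtered comparison map, combined with a Bennequin-style positivity bound on the writhe of a positive braid diagram.
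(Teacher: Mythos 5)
The first two items of your plan are fine and follow the paper: cobordisms induce filtered maps, self-duality gives $s(-K)=-s(K)$, the connected sum inequality comes from the K\"unneth/surgery diagram, and the torus knot value follows from the slice genus bound together with a positivity argument for positive braids. So parts (1) and (2) are in order.

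Part (3), however, has a real gap. You argue that the quantum grading on $\mathfrak{sl}_3$-homology is always even, hence $s^3_{\mathfrak{sl}_3}(K)$ is even. But the evenness of the $q$-grading only forces the \emph{un-divided} quantity $s'_f(K)$ to lie in $2\Z$, and since $s^p_{\mathfrak{sl}_3}(K)=s'_f(K)/2$, this gives $s^p_{\mathfrak{sl}_3}(K)\in\Z$, not $2\Z$. Your argument, as stated, would prove (3) for every $p$ — yet the paper's computations (e.g.\ for $10_{125}$) exhibit knots with $s^p_{\mathfrak{sl}_3}(K)$ \emph{odd} for $p\neq 3$, and the theorem statement is explicitly drawn in contrast to the $p\neq 3$ case. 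What is actually needed is a congruence modulo $4$ on the $q$-degree of the relevant filtration level, and that congruence is specific to the polynomial $X^3-X$ used at $p=3$. The paper obtains it by observing that $\Csl(D;\F_{X^3-X})$ splits as $C_0\oplus C_2$ according to $q$-degree mod $4$ (because the polynomial $X^3-X$ mixes only monomials whose $q$-degrees differ by $4$), and then proving a lemma that the canonical cocycle $I_0(D)$ lies in $C_{2c}$ where $c$ is the number of components mod $2$; this lemma is established by a careful check of how $I_0(D)$ transforms under Morse and Reidemeister moves. For a knot this places the canonical generator in $C_2$, forcing $s'''_{X^3-X}(K)\in 4\Z$ and hence $s^3_{\mathfrak{sl}_3}(K)\in 2\Z$.

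A secondary point: your anticipated ``main obstacle'' — that the roots of $X^3-1$ collapse at $p=3$ — suggests you are attempting to carry the $X^3-1$ (Gornik) deformation into characteristic $3$. The invariant $s^3_{\mathfrak{sl}_3}$ is built from the polynomial $X^3-h^2X$ (i.e.\ $X^3-X$ after setting $h=1$), which has the three distinct roots $0,\,1,\,-1$ in $\F_3$; so the decomposition into idempotents does \emph{not} degenerate, and no fallback ``filtered comparison with characteristic zero'' is required. The genuine extra difficulty at $p=3$ is that only one of the three idempotents is fixed by the grading-preserving automorphism $\Phi_h$, so the argument that $s'=s''=s'''$ uses a Jordan block rather than an eigenbasis; this is what goes into the definition of $s^3_{\mathfrak{sl}_3}$, and is independent of (3).
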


The first two conditions mean that $s^3_{\mathfrak{sl}_3}$ is (up to a scalar) a {\em slice-torus invariant}, compare \cite{MR2057779}. The third condition is worth mentioning, since for $p\not=3$ the homomorphism only satisfies (1) and (2), but not (3). This puts $s^3_{\mathfrak{sl}_3}$ closer to the standard Rasmussen $s$-invariants, but we will see that they are not the same. It also shows that $s^3_{\mathfrak{sl}_3}$ is linearly independent of $s^0_{\mathfrak{sl}_3}$ and we will also see that $p = 0, 2, 3$ give linearly independent homomorphisms.

The aforementioned spectral sequences, which can also be used to calculate $s^p_{\mathfrak{sl}_3}$, are closely related to Frobenius algebras of the form $\F[X]/f(X)$, where $f(X)$ is a separable polynomial of degree $3$. In particular, $X^3-1$ is used for $p\not=3$ in the definition of $s^p_{\mathfrak{sl}_3}$, and $X^3-X$ for $p=3$. 

Concordance invariants can also be derived from other separable polynomials, but as was observed in \cite{MR3458146} need not give rise to homomorphisms from the concordance group, at least not in a straightforward way. Lobb--Lewark \cite[\S 3]{MR3458146} introduced a notion of $\KR$-equivalence (depending on a knot $K$) on polynomials over $\C$ to get a better grasp on the various invariants. While $X^3-1$ and $X^3-X$ are easily seen to not be $\KR$-equivalent, finding other separable polynomials not $\KR$-equivalent to $X^3-X$ is more difficult. Lobb--Lewark showed that $X^3-X-1$ is not $\KR$-equivalent to $X^3-X$ for certain connected sums of pretzel knots.

We make calculations that show that for some torus knots with $4$, $5$, or $6$ strands the polynomials $X^3-X$ and $X^3-X-1$ are not $\KR$-equivalent. We also give a criterion (see Proposition \ref{prp:kr_crit}) that explains why $X^3-X$ and $X^3-X-1$ are so often $\KR$-equivalent for knots with a small number of crossings.

\subsection*{Acknowledgements} The author would like to thank Lukas Lewark for useful comments and clearing up a few technicalities, and Andrew Lobb for useful discussions.

\section{Rank Three Frobenius systems}

A {\em Frobenius system} $\Fr=(\Lambda,A,\varepsilon, \Delta)$ consists of an inclusion of commutative rings $\imath\colon \Lambda\to A$, a $\Lambda$-module map $\varepsilon\colon A\to \Lambda$, and a $A$-bimodule map $\Delta\colon A\to A\otimes_\Lambda A$ which is co-associative and co-commutative, such that $(\varepsilon\otimes \id)\Delta = \id$. We say that $\Fr$ is of rank $3$, if there exists $X\in A$ such that $1, X, X^2$ is a basis of $A$ as a $\Lambda$-module.

We note that all our rings are assumed to have an identity $1$, and any ring homomorphism sends $1$ to $1$.

\begin{example}
The {\em Mackaay--Vaz system} 
\[
\FrMV = (\ZMV, \ZMVX/(X^3-\fa X^2-\fb X-\fc), \varepsilon, \Delta)
\]
is given by
\[
\varepsilon(1) = 0,\hspace{1cm}\varepsilon(X) = 0, \hspace{1cm}\varepsilon(X^2) = -1,
\]
and
\begin{align}\label{eq:MV_comult}
\nonumber&\Delta(1) = -1\otimes X^2 - X\otimes X - X^2\otimes 1 + \fa (1\otimes X + X \otimes 1) + \fb \otimes 1,\\
&\Delta(X) = -X\otimes X^2 - X^2\otimes X +\fa X\otimes X - \fc \otimes 1,\\
\nonumber&\Delta(X^2) = -X^2\otimes X^2 - \fb X\otimes X - \fc (1\otimes X + X\otimes 1).
\end{align}
This Frobenius system has a $q$-grading given by
\[
|1|_q = 0, \hspace{0.5cm} |\fa|_q = 2, \hspace{0.5cm} |\fb|_q = 4, \hspace{0.5cm}|\fc|_q = 6, \hspace{0.5cm}|X|_q = 2.
\]
\end{example} 

Following Khovanov \cite{MR2232858}, a ring homomorphism $\psi\colon \Lambda \to \Lambda'$ induces a Frobenius system $(\Lambda', A', \varepsilon',\Delta')$, called a {\em base change}, where $A' = A\otimes_\Lambda \Lambda'$, $\varepsilon' = \varepsilon\otimes \id_{\Lambda'}$, and $\Delta' = (\imath'\otimes \id_{A'})\circ (\Delta\otimes \id_{\Lambda'})$, where $\imath'\colon A\to A'$ is given by $\imath'(a) = a\otimes 1$.

\begin{example}\label{ex:easy_change}
Let $\Lambda$ be a commutative ring and $a,b,c\in \Lambda$. Define the polynomial 
\begin{equation}\label{eq:poly_standard}
f(X) = X^3 - aX^2-bX-c.
\end{equation}
Then $\Fr_f=(\Lambda, \Lambda[X]/(f(X)), \varepsilon, \Delta)$ is the base change from $\FrMV$ via the ring homomorphism $\psi\colon \Z[\fa,\fb,\fc]\to \Lambda$ sending $1$ to $1$, $\fa$ to $a$, $\fb$ to $b$, and $\fc$ to $c$. If $a=b=c =0$ we get the Frobenius system $\FrK$ considered by Khovanov in \cite{MR2100691}. If $\Lambda=\C$, we get the Frobenius systems from Mackaay--Vaz in \cite[\S 3]{MR2336253}.
\end{example}

We want to keep the grading of $\FrMV$ intact, while also having a more flexible system as in \cite[\S 3]{MR2336253}. We will therefore be mainly interested in the following systems.

\begin{example}
Let $\K$ be a commutative ring, $a,b,c\in \K$ and $\Lambda = \K[h]$, and consider the polynomial
\begin{equation}\label{eq:poly_graded}
\of(X) = X^3 - ahX^2 -bh^2X-ch^3.
\end{equation}
Define $A = \K[h, X]/(\of(X))$. We then get a Frobenius system $\Frof = (\K[h], A, \varepsilon, \Delta)$ as a base change from $\FrMV$. Setting $|h|_q = 2$ turns it into a graded Frobenius system (with elements of $\K$ having $q$-grading $0$). The base change $\K[h] \to \K$ sending $h$ to $1$ recovers Example \ref{ex:easy_change} for general $a,b,c\in \K$, while sending $h$ to $0$ recovers $\FrK$.
\end{example}

Any base change from $\FrMV$ is a system $\Fr_f=(\Lambda, A,\varepsilon, \Delta)$, with $A = \Lambda[X]/(f(X))$ for some $f(X)$ as in (\ref{eq:poly_standard}). Given $\alpha\in \Lambda$, define
\[
f'(X) = X^3-(a-3\alpha)X^2-(b+2a\alpha-3\alpha^2)X-(c+b\alpha+a\alpha^2-\alpha^3),
\]
and $A' = \Lambda[X]/(f'(X))$.

\begin{lemma}\label{lm:easyiso}
The Frobenius systems $\Fr_f$ and $\Fr_{f'}$ are isomorphic via the ring isomorphism $\Phi_\alpha\colon A\to A'$ given by $\Phi_{\alpha}(X) = X+\alpha$.
\end{lemma}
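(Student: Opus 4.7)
The plan is to show that the substitution $X \mapsto X+\alpha$ gives a well-defined ring isomorphism, then verify that this isomorphism intertwines the Frobenius structures. The first step is purely computational: I would expand
\[
f(X+\alpha) = (X+\alpha)^3 - a(X+\alpha)^2 - b(X+\alpha) - c
\]
and collect coefficients of $X^2$, $X^1$, $X^0$. This should yield exactly $f'(X)$ as defined in the statement. Once this polynomial identity is in hand, the ring homomorphism $\Lambda[X] \to \Lambda[X]$ sending $X \mapsto X+\alpha$ descends to a well-defined ring map $\Phi_\alpha\colon A \to A'$, because the generator $f(X)$ of the ideal defining $A$ is sent into the ideal $(f'(X))$ defining $A'$. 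The analogous map $\Phi_{-\alpha}\colon A' \to A$ (using $f'(X-\alpha) = f(X)$) gives a two-sided inverse, so $\Phi_\alpha$ is a ring isomorphism.

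Next I would verify compatibility with the counit. Since both $\Fr_f$ and $\Fr_{f'}$ arise as base changes from $\FrMV$, both counits send $1,X,X^2$ to $0,0,-1$ respectively. Expanding $\Phi_\alpha(X^2) = X^2 + 2\alpha X + \alpha^2$ and applying $\varepsilon'$ gives $-1$, while $\Phi_\alpha(X) = X+\alpha$ maps to $0$, so $\varepsilon' \circ \Phi_\alpha = \varepsilon$ on the generating basis $\{1,X,X^2\}$ and hence on all of $A$.

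For compatibility with the comultiplication, the clean argument is to invoke the fact that on a commutative Frobenius algebra, the comultiplication is uniquely determined by the algebra structure together with the counit: $\Delta$ is characterized by $(\varepsilon \otimes \id)\Delta = \id$ (plus coassociativity and the Frobenius condition), equivalently by the dual basis formula relative to the nondegenerate pairing $\langle x,y\rangle = \varepsilon(xy)$. Since $\Phi_\alpha$ is a ring isomorphism with $\varepsilon' \circ \Phi_\alpha = \varepsilon$, the comultiplication $(\Phi_\alpha^{-1} \otimes \Phi_\alpha^{-1}) \circ \Delta' \circ \Phi_\alpha$ satisfies the same defining properties as $\Delta$, hence equals $\Delta$. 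This forces $(\Phi_\alpha \otimes \Phi_\alpha) \circ \Delta = \Delta' \circ \Phi_\alpha$, completing the proof.

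The main obstacle is really just the bookkeeping in step one, where one must shuffle signs carefully to match the unusual sign convention $f(X)=X^3-aX^2-bX-c$ with the claimed expression for $f'$. If one preferred a hands-on argument over the abstract uniqueness of $\Delta$, one could instead verify the comultiplication compatibility on each of $1,X,X^2$ directly using the explicit formulas \eqref{eq:MV_comult}; this is routine but tedious, and would constitute the bulk of the work. The uniqueness-of-$\Delta$ argument sidesteps this calculation entirely.
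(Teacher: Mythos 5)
Your proposal is correct. The first two steps (well-definedness of $\Phi_\alpha$ as a ring isomorphism, compatibility with the counit) follow the paper's proof closely. The treatment of the comultiplication, however, is where you genuinely diverge. The paper reduces to checking $\Phi_\alpha\otimes\Phi_\alpha\circ\Delta(1)=\Delta'\circ\Phi_\alpha(1)$ (using that both $\Delta$ and $\Delta'$ are $A$-bimodule maps, hence determined by their value on $1$) and then asserts this is a ``straightforward calculation'' using the explicit formulas in (\ref{eq:MV_comult}). You instead invoke the uniqueness of the Frobenius comultiplication: on a finitely generated projective $\Lambda$-algebra, a bimodule map $\Delta$ satisfying $(\varepsilon\otimes\id)\Delta=\id$ is uniquely determined by $\varepsilon$ (equivalently, $\Delta(1)=\sum_i e_i\otimes e^i$ for any basis $\{e_i\}$ with $\{e^i\}$ the $\varepsilon$-dual basis), so once $\varepsilon'\circ\Phi_\alpha=\varepsilon$ is established, the transported comultiplication $(\Phi_\alpha^{-1}\otimes\Phi_\alpha^{-1})\circ\Delta'\circ\Phi_\alpha$ must coincide with $\Delta$. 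This sidesteps the explicit expansion of $\Delta(1)$ entirely, at the modest cost of invoking nondegeneracy of the Frobenius pairing (which holds here, and indeed is implicit in the definition of a Frobenius system). Your abstract argument is cleaner and less error-prone; the paper's direct computation is more self-contained and keeps the reader close to the explicit formulas that recur throughout. Both are valid. One small remark: for the uniqueness argument you really only need the bimodule property and the counit axiom, not coassociativity or the full ``Frobenius condition'' that you also mention; these extra properties are automatically satisfied once $\Delta$ is pinned down by the dual-basis formula.
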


\begin{proof}
The polynomial $f'(X)$ is chosen exactly so that the ring homomorphism $\Phi\colon \Lambda[X]\to\Lambda[X]$ mapping $X$ to $X+\alpha$ sends $f(X) = X^3-aX^2-bX-c$ to $f'(X)$. Hence $\Phi_\alpha$ is a well defined ring homomorphism. It is also an isomorphism with inverse $X\mapsto X-\alpha$. It remains to show that $\Phi_\alpha$ commutes with co-units and co-multiplication. For the co-unit we have
\[
\varepsilon'(\Phi_\alpha(1)) = \varepsilon'(\Phi_\alpha(X)) = 0, \hspace{1cm}\varepsilon'(\Phi_\alpha(X^2)) = -1,
\]
so $\varepsilon'\circ \Phi_\alpha = \varepsilon$. For the co-multiplication, we only need to check that
\[
\Phi_\alpha\otimes \Phi_\alpha \circ \Delta(1) = \Delta'\circ\Phi_\alpha(1),
\] 
since $\Delta$ and $\Delta'$ are bimodule maps. A straightforward calculation reveals that this is indeed the case.
\end{proof}

The {\em dual system} of the Frobenius system $\Fr = (\Lambda, A, \varepsilon, \Delta)$ is the Frobenius system $\Fr^\ast=(\Lambda, A^\ast, \imath^\ast, m^\ast)$, where $A = \Hom_\Lambda(A,\Lambda)$, $m^\ast$ is the dual map of the multiplication map on $A$, and $\imath^\ast$ is the dual of the inclusion $\Lambda\subset A$. If $\Fr$ is isomorphic to $\Fr^\ast$ as Frobenius systems, we call $\Fr$ {\em self-dual}.

\begin{proposition}\label{prp:self_dual}
The Frobenius system $\FrMV$ is self-dual.
\end{proposition}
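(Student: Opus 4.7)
The plan is to construct an explicit Frobenius system isomorphism $\phi \colon \FrMV \to \FrMV^\ast$ using the canonical pairing. I would define $\phi\colon A \to A^\ast$ by
\[
\phi(a)(b) = \varepsilon(ab),
\]
which is $\Lambda$-linear by construction and is the standard candidate for a self-duality isomorphism of a commutative Frobenius algebra.

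The main technical step is to show that $\phi$ is a $\Lambda$-module isomorphism. Since $A$ is free of rank three over $\Lambda = \ZMV$ with basis $\{1, X, X^2\}$, this reduces to checking that the Gram matrix of the bilinear form $(a,b) \mapsto \varepsilon(ab)$ is invertible. Using $X^3 = \fa X^2 + \fb X + \fc$ to reduce the higher powers, one computes $\varepsilon(X^3) = -\fa$ and $\varepsilon(X^4) = -\fa^2-\fb$, and the Gram matrix becomes
\[
\begin{pmatrix} 0 & 0 & -1 \\ 0 & -1 & -\fa \\ -1 & -\fa & -\fa^2-\fb \end{pmatrix},
\]
whose determinant is $1$. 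Hence $\phi$ is an isomorphism of $\Lambda$-modules.

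With $\phi$ known to be an isomorphism, the remaining conditions are routine. The unit condition $\phi(1) = \varepsilon$ (the unit of $A^\ast$) and the co-unit condition $\imath^\ast \circ \phi = \varepsilon$ are immediate from the definition, since $\phi(1)(b) = \varepsilon(b)$ and $\phi(a)(1) = \varepsilon(a)$. The two intertwining identities—that $\phi$ is a ring homomorphism with respect to the multiplication $\Delta^\ast$ on $A^\ast$, and that $(\phi \otimes \phi) \circ \Delta = m^\ast \circ \phi$—both reduce to the general Frobenius identity
\[
\textstyle \sum \varepsilon(a c_{(1)}) \varepsilon(b c_{(2)}) = \varepsilon(abc),
\]
which follows from the $A$-bimodule property of $\Delta$, co-commutativity, and $(\varepsilon \otimes \id) \Delta = \id$: applying $(\id \otimes \varepsilon)$ to $\Delta(cb) = \Delta(c) \cdot b$ yields $\sum c_{(1)} \varepsilon(c_{(2)} b) = cb$, and then applying $\varepsilon(a \cdot -)$ together with commutativity of $A$ gives the required identity.

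The main obstacle, conceptually, is ensuring non-degeneracy of $\varepsilon(\cdot\,,\cdot)$ holds over the universal base ring $\ZMV$ itself rather than after inverting some polynomial in $\fa, \fb, \fc$. The determinant calculation above is what pins this down: a priori it could have been a nontrivial polynomial whose vanishing loci would obstruct self-duality, and the fact that it equals $1$ is exactly what makes $\FrMV$ self-dual over its universal ring.
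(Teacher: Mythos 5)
Your proof is correct, and it takes a genuinely different route from the paper's. The paper proceeds by computing $\FrMV^\ast$ explicitly: it introduces a dual basis $\varepsilon, X^\ast, X^{\ast 2}$, works out $X^{\ast 3}$ and the dual co-structure to identify $\FrMV^\ast \cong \Fr_f$ with $f(X) = X^3 + 2\fa X^2 - (\fb - \fa^2)X - (\fc + \fa\fb)$, and then applies Lemma~\ref{lm:easyiso} with $\alpha = \fa$ to return to $\FrMV$. You instead use the canonical pairing $\phi(a)(b) = \varepsilon(ab)$ and the generic Frobenius identity, which is shorter and more conceptual. The paper's computation yields a concrete dividend — it identifies $\FrMV^\ast$ with a specific base change polynomial, which makes the isomorphism explicit in the coordinates used throughout the paper; your argument is cleaner but delivers only the abstract statement.

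One small conceptual caveat on your closing paragraph: the invertibility of the Gram matrix is not a special feature of $\FrMV$ that "happens to work out." The axiom $(\varepsilon \otimes \id)\Delta = \id$ of a Frobenius system forces the pairing $(a,b)\mapsto\varepsilon(ab)$ to be nondegenerate over any base: writing $\Delta(1)=\sum u_i\otimes v_i$, the map $\psi(f)=\sum f(u_i)v_i$ is a two-sided inverse to $\phi$ by exactly that axiom and co-commutativity. So there is no potential "vanishing locus" obstruction; the determinant is automatically a unit. Your explicit computation that the determinant equals $1$ is a valid sanity check and confirms the bases are compatible, but the worry it was meant to address cannot arise.
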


\begin{proof}
Since $\FrMV$ is a rank $3$ system, we have that $A^\ast$ is a free $\ZMV$-module of rank $3$. Define $X^\ast \in A^\ast$ by
\[
X^\ast(1) = 0,\hspace{1cm}X^\ast(X) = -1, \hspace{1cm}X^\ast(X^2) = 0.
\]
It follows from (\ref{eq:MV_comult}) that
\[
X^\ast\otimes X^\ast\circ \Delta(1) = -1, \hspace{0.5cm} X^\ast\otimes X^\ast\circ\Delta(X) = \fa, \hspace{0.5cm} X^\ast\otimes X^\ast\circ\Delta(X^2) = -\fb.
\]
Therefore $\varepsilon, X^\ast, X^{\ast\,2}$ is a basis of $A^\ast$, and $\FrMV^\ast$ is a rank $3$ system. Now consider $X^{\ast\,3}$. We have
\begin{align*}
X^{\ast\,3} (1) &= X^{\ast\,2}\otimes X^\ast\circ \Delta(1) = -X^{\ast\,2}(X)\otimes X^\ast(X) + \fa X^{\ast\,2}(1)\otimes X^\ast(X) = 2\fa,\\
X^{\ast\,3}(X) &= -X^{\ast\,2}(X^2)\otimes X^\ast(X) + \fa X^{\ast\,2}(X)\otimes X^\ast(X) = -\fb-\fa^2,\\
X^{\ast\,3}(X^2) &= -\fb X^{\ast\,2}(X)\otimes X^\ast(X) - \fc X^{\ast\,2}(1)\otimes X^\ast(X) =\fa\fb - \fc.
\end{align*}
We can express $X^{\ast\, 3}$ in terms of the basis $\varepsilon, X^\ast, X^{\ast\,2}$ and get
\[
X^{\ast\,3} = -2\fa X^{\ast\,2}+(\fb-\fa^2)X^\ast + (\fc+\fa\fb)\varepsilon.
\]
In particular, $A^\ast \cong \ZMVX / (f(X))$ with 
\[
f(X) = X^3 + 2\fa X^2 - (\fb-\fa^2) X - (\fc+\fa\fb).
\]
We also have that $\imath^\ast$ vanishes on $\varepsilon$ and $X^\ast$, while $X^{\ast\,2}$ is sent to $-1$. Hence $\imath$ can be identified with $\varepsilon$ from $\Fr_f$. Now consider $m^\ast(\varepsilon) = \varepsilon\circ m\colon A\otimes A \to \ZMV$. This vanishes on $1\otimes 1$, $1\otimes X$, and $X\otimes 1$, while each of $1\otimes X^2$, $X\otimes X$, and $X^2\otimes 1$ are sent to $-1$. Also, $X\otimes X^2$ and $X^2\otimes X$ are sent to $-\fa$, and $X^2\otimes X^2$ is sent to $-\fb-\fa^2$. It is straightforward to check that
\[
-\varepsilon\otimes X^{\ast \,2}- X^\ast\otimes X^\ast-X^{\ast\, 2}\otimes \varepsilon -2\fa(\varepsilon\otimes X^\ast+X^\ast\otimes \varepsilon)+(\fb-\fa^2)\varepsilon\otimes \varepsilon
\]
is the same map $A\otimes A \to \ZMV$. Therefore $\FrMV^\ast \cong \Fr_f$. By Lemma \ref{lm:easyiso} $\FrMV$ is self-dual via the isomorphism $\Phi_\fa$.
\end{proof}

This means that all base changes of $\FrMV$ are self-dual as well.

Let $\Fr = (\Lambda, A, \varepsilon, \Delta)$ be a Frobenius system and $x\in A$ be a unit. Then $\Fr^x = (\Lambda, A, \varepsilon^x, \Delta^x)$ given by
\[
\varepsilon^x(r) = \varepsilon(xr), \hspace{1cm} \Delta^x(a) = \Delta(x^{-1}a) = x^{-1}\Delta(a)
\]
for $r\in \Lambda$ and $a\in A$, is a Frobenius system, called the {\em twisting of $\Fr$ by $x$}.

Consider the Frobenius system $\Fr_f$ from Example \ref{ex:easy_change}, and let $\beta\in \Lambda$ be a unit. The ring isomorphism $\Psi_\beta\colon \Lambda[X]\to \Lambda[X]$ induced by $\Psi_\beta(X) = \beta X$ induces a ring isomorphism $\Psi_\beta\colon \Lambda[X]/(f(X)) \Lambda[X]/(g(X))$, where
\[
g(X) = X^3 - a\beta^{-1}X^2-b\beta^{-2}X - c\beta^{-3}.
\]
For this to induce an isomorphism on Frobenius systems, we need to twist one of the systems by $\beta^{-2}$.

\begin{lemma}\label{lm:twistbeta}
The ring isomorphism $\Psi_\beta\colon \Lambda[X]/(f(X)) \Lambda[X]/(g(X))$ induces an isomorphism $\Fr_f\cong \Fr_g^{\beta^{-2}}$.
\end{lemma}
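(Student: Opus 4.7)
The proof will follow the same template as Lemma \ref{lm:easyiso}: verify that $\Psi_\beta$ is a well-defined ring isomorphism, then check that it intertwines the counits and the comultiplications, using the modifications introduced by the twist $x = \beta^{-2}$.

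First I would confirm that $\Psi_\beta$ is well defined: applying the extension $\Psi_\beta\colon \Lambda[X]\to \Lambda[X]$ (sending $X$ to $\beta X$) to $f(X)$ yields $\beta^3 X^3 - a\beta^2 X^2 - b\beta X - c = \beta^3 g(X)$, and since $\beta$ is a unit the ideals $(f(X))$ and $(\beta^3 g(X)) = (g(X))$ are mapped to each other. Thus $\Psi_\beta$ descends to a ring isomorphism on the quotients, with inverse induced by $X\mapsto \beta^{-1}X$.

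Next I would check the counit condition $\varepsilon_g^{\beta^{-2}}\circ \Psi_\beta = \varepsilon_f$. By definition $\varepsilon_g^{\beta^{-2}}(r) = \varepsilon_g(\beta^{-2}r)$, so on the basis $1, X, X^2$ we have
\[
\varepsilon_g^{\beta^{-2}}(\Psi_\beta(1)) = \varepsilon_g(\beta^{-2}) = 0, \quad \varepsilon_g^{\beta^{-2}}(\Psi_\beta(X)) = \varepsilon_g(\beta^{-1}X) = 0, \quad \varepsilon_g^{\beta^{-2}}(\Psi_\beta(X^2)) = \varepsilon_g(X^2) = -1,
\]
which matches $\varepsilon_f$.

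Finally I would verify the comultiplication condition. By the definition of twisting, $\Delta_g^{\beta^{-2}}(a) = \beta^2 \Delta_g(a)$. Since both $\Delta_f$ and $\Delta_g^{\beta^{-2}}$ are bimodule maps, it is enough (as in Lemma \ref{lm:easyiso}) to test the identity $(\Psi_\beta\otimes \Psi_\beta)\circ \Delta_f(1) = \Delta_g^{\beta^{-2}}(\Psi_\beta(1)) = \beta^2 \Delta_g(1)$ on the element $1$. Applying the explicit formula from (\ref{eq:MV_comult}) base-changed to $\Fr_f$ gives
\[
(\Psi_\beta\otimes\Psi_\beta)\Delta_f(1) = -\beta^2(1\otimes X^2 + X\otimes X + X^2\otimes 1) + a\beta(1\otimes X + X\otimes 1) + b(1\otimes 1),
\]
while multiplying the analogous formula for $\Delta_g(1)$ (with coefficients $a\beta^{-1}$, $b\beta^{-2}$) by $\beta^2$ yields exactly the same expression. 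Hence $\Psi_\beta$ defines an isomorphism $\Fr_f \cong \Fr_g^{\beta^{-2}}$.

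There is no real obstacle; the only point worth being careful about is the bookkeeping of the twist factor, and in particular the observation that scaling $X$ by $\beta$ scales the top basis element $X^2$ by $\beta^2$, which is precisely why the correct twisting element is $\beta^{-2}$ rather than $\beta^{-1}$ or $\beta^{-3}$.
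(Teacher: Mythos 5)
Your proof is correct and follows the same route the paper takes: check the counit on the basis $1,X,X^2$ and the comultiplication on $1$, using that a twist by $\beta^{-2}$ rescales the counit and comultiplication by $\beta^{-2}$ and $\beta^{2}$ respectively. The paper simply asserts both checks are ``straightforward from the definitions,'' whereas you carry out the computations (and also spell out why $\Psi_\beta$ descends to the quotients), which is a harmless and perhaps helpful expansion.
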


\begin{proof}
It remains to show that $\varepsilon_f(X^k) = \varepsilon^{\beta^{-2}}(X^k)$ for $k=0,1,2$, and $\Psi_\beta\otimes \Psi_\beta (\Delta_f(1)) = \Delta_g^{\beta^{-2}}(1) = \beta^2\Delta_g(1)$. Both calculations are straightforward from the definitions.
\end{proof}

\section{Generalities on $\mathfrak{sl}_3$-link homology}
\label{sec:link_hom}

A {\em closed web} $\Gamma$ is a finite trivalent oriented graph in $\R^2$, possibly with vertex-less loops, such that at each vertex all edges are either incoming, or all are outgoing. 

An oriented link diagram $D$ with $n$ crossings gives rise to $2^n$ closed webs by resolving each crossing in two different ways as in Figure \ref{fig:resolutions}.

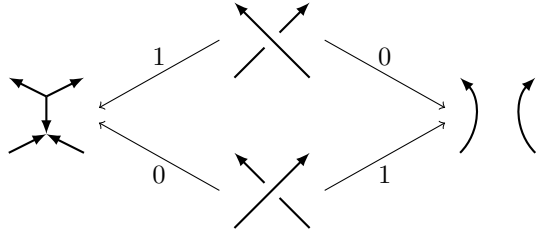
\begin{figure}[ht]
\begin{tikzpicture}
\pcrossing{0}{0}{1}
\mcrossing{0}{2}{1}
\smoothing{3}{1}{1}{->}
\spidering{-3}{1}{1}{->}
\draw[->] (1.2, 0.5) -- node [below] {$1$} (2.8, 1.4);
\draw[->] (1.2, 2.5) -- node [above] {$0$} (2.8, 1.6);
\draw[->] (-0.2, 0.5) -- node [below] {$0$} (-1.8, 1.4);
\draw[->] (-0.2, 2.5) -- node [above] {$1$} (-1.8, 1.6);
\end{tikzpicture}
\caption{\label{fig:resolutions}The $0$ and $1$ resolutions of a positive (the lower diagram) and negative (the upper diagram) crossing.}
\end{figure}

The {\em Kuperberg bracket} $\langle \Gamma\rangle$ of a web $\Gamma$ is the Laurent polynomial in one variable $q$ determined by the relations
\begin{align*}
\left\langle \Gamma \sqcup
\begin{tikzpicture}[baseline={([yshift=-.6ex]current bounding box.center)}]
\draw[thick] (0,0) circle (4pt);
\end{tikzpicture}
\right\rangle &= (q^2+1+q^{-2}) \langle \Gamma \rangle,\\
\left\langle 
\begin{tikzpicture}[baseline={([yshift=-.6ex]current bounding box.center)}]
\draw[>=latex, ->, thick] (0,0) -- (0.5,0);
\draw[>=latex, <-, thick] (0.5,0) to [out = 45, in = 135] (1.5, 0);
\draw[>=latex, <-, thick] (0.5,0) to [out = 315, in = 225] (1.5, 0);
\draw[>=latex,->, thick] (1.5,0) -- (2,0);
\end{tikzpicture}
\right\rangle &= (q+q^{-1}) \langle
\begin{tikzpicture}[baseline={([yshift=-.6ex]current bounding box.center)}]
\draw[>=latex, ->, thick] (0,0) -- (2,0);
\end{tikzpicture}
\rangle, \\
\left\langle 
\begin{tikzpicture}[baseline={([yshift=-.6ex]current bounding box.center)}]
\draw[>=latex,->, thick] (0,0) -- (0.25,0.25);
\draw[>=latex,->, thick] (0.75,0.25) -- (0.25,0.25);
\draw[>=latex,->, thick] (0.25,0.75) -- (0.25,0.25);
\draw[>=latex,->, thick] (0.75,0.25) -- (1, 0);
\draw[>=latex,->, thick] (0.75, 0.25) -- (0.75, 0.75);
\draw[>=latex,->, thick] (0.25, 0.75) -- (0, 1);
\draw[>=latex,->, thick] (0.25, 0.75) -- (0.75, 0.75);
\draw[>=latex,->, thick] (1, 1) -- (0.75, 0.75);
\end{tikzpicture}
\right\rangle &= \left\langle 
\begin{tikzpicture}[baseline={([yshift=-.6ex]current bounding box.center)}]
\draw[>=latex,->, thick] (0,0) to [out = 45, in = 315] (0,1);
\draw[>=latex,->, thick] (1,1) to [out = 225, in = 135] (1, 0);
\end{tikzpicture}
\right\rangle + \left\langle
\begin{tikzpicture}[baseline={([yshift=-.6ex]current bounding box.center)}]
\draw[>=latex,->, thick] (0,0) to [out = 45, in = 135] (1,0);
\draw[>=latex,->, thick] (1,1) to [out = 225, in = 315] (0,1);
\end{tikzpicture}
\right\rangle .
\end{align*}

In \cite{MR2100691}, Khovanov used foams and the rank 3 Frobenius system $\FrK$ to construct graded free abelian groups $\FrK(\Gamma)$ for a web $\Gamma$ such that the graded rank of this abelian group is $\langle \Gamma \rangle$. Moreover, given an oriented link diagram $D$ he used these groups to construct a $q$-graded cochain complex $\Csl(D;\FrK)$, whose bigraded homology groups, denoted here by $\Hsl^{i,j}(L;\FrK)$ only depend on the underlying oriented link $L$.

This construction was generalized by Mackaay and Vaz \cite{MR2336253} to any base change system $\Fr = (\Lambda, A, \varepsilon, \Delta)$ of $\FrMV$, so that $\Fr(\Gamma)$ is a graded free $\Lambda$-module of graded rank $\langle\Gamma\rangle$, and the resulting homology groups $\Hsl^{i,j}(L;\Fr)$ are link invariants.

To define $\Fr(\Gamma)$ and the $\mathfrak{sl}_3$-homology groups, we need the notion of a foam. We refer the reader to \cite{MR2100691} for a precise definition. Here we think of a foam as a surface that admits singular arcs, so that each point of a singular arc has a neighborhood homeomorphic to the letter $Y$ crossed with an interval. Removing the singular set we get a surface which is oriented in a way that induces an orientation on the singular arcs. We call the closures of the components of the non-singular set {\em facets}, so that each singular arc has three adjacent facets.

In particular, we think of foams as cobordisms between webs $\Gamma_0$, $\Gamma_1$ embedded in $\R^2\times [0,1]$, where $\Gamma_i$ is embedded in $\R^2\times \{i\}$ for $i=0,1$. The embedding of a foam into $\R^3$ gives rise to a cyclic ordering of the facets around a singular arc using the {\em left-hand rule}, see Figure \ref{fig:left-hand}.

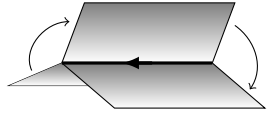
\begin{figure}[ht]
\begin{tikzpicture}
\draw[-] (0,0) -- (-0.7, -0.3) -- (0.3, -0.3) -- (1.3, -0.3) -- (2, 0);
\shade[color = gray, opacity = 0.3] (0,0) -- (0.3 ,0.8) -- (2.3,0.8) -- (2,0);
\shade[color = gray!40, opacity = 0.2] (0,0) -- (-0.7, -0.3) -- (1.3, -0.3) -- (2, 0);
\shade[color = gray!40, opacity = 0.85] (0,0) -- (0.7, -0.6) -- (2.7, -0.6) -- (2, 0);
\draw[<-, very thick, >=latex] (0.8,0) -- (2,0);
\draw[-, very thick] (0,0) -- (1.2,0);
\draw[-] (0,0) -- (0.3 ,0.8) -- (2.3,0.8) -- (2,0);
\draw[-] (0,0) -- (0.7, -0.6) -- (2.7, -0.6) -- (2, 0);
\draw[->] (2.3, 0.5) to [out = 325, in = 55] (2.5, -0.35);
\draw[->] (-0.4, -0.1) to [out =115, in = 180] (0.1, 0.55);
\end{tikzpicture}
\caption{\label{fig:left-hand}The cyclic ordering of facets adjacent to an oriented singular arc.}
\end{figure}

Finally, foams are allowed to have finitely many marked points away from the singular arcs, called {\em dots}. A dot can move freely on its facet, but cannot cross singular arcs. 

A notable closed foam is the {\em theta-foam}
\[
\Theta = \{ (x,y,z) \in \R^3 \mid x^2+y^2+z^2 = 1 \mbox{ or } (z = 0 \mbox{ and } x^2+y^2 \leq 1)\},
\]
which has one singular circle consisting of the equator $x^2+y^2 = 1$. A compatible orientation on $\Theta$ induces a cyclic ordering on the three facets. We assume the standard orientation on $\Theta$ to be such that the facet $\{(x,y,0)\mid x^2+y^2\leq 1\}$ is followed by the facet $\{(x,y,z)\mid x^2+y^2+z^2=1, \, z\geq 0\}$. 

As in \cite{MR2336253} we define $\Fm$ to be the category whose objects are closed webs and whose morphisms are $\ZMV$-linear combinations of isotopy classes of foams. 

Also, we need the following relations from \cite{MR2336253}, which reflect the usual geometric interpretation of Frobenius systems in terms of dotted surfaces, compare \cite[\S 3.1]{MR2100691}.
\begin{align}
\begin{tikzpicture}[baseline={([yshift=-.6ex]current bounding box.center)}]
\parallelbox{0}{0}{1}{0.66}
\node[scale = 0.75] at (0.6, 0.33) {$\bullet\bullet\bullet$};
\end{tikzpicture}
 &= \fa
 \begin{tikzpicture}[baseline={([yshift=-.6ex]current bounding box.center)}]
\parallelbox{0}{0}{1}{0.66}
\node[scale = 0.75] at (0.6, 0.33) {$\bullet\,\bullet$};
\end{tikzpicture}
+\fb
\begin{tikzpicture}[baseline={([yshift=-.6ex]current bounding box.center)}]
\parallelbox{0}{0}{1}{0.66}
\node[scale = 0.75] at (0.6, 0.33) {$\bullet$};
\end{tikzpicture}
+\fc
\begin{tikzpicture}[baseline={([yshift=-.6ex]current bounding box.center)}]
\parallelbox{0}{0}{1}{0.66}
\end{tikzpicture}
\tag{3D}\label{eq:3d}\\[0.2cm]
- \,\,
\begin{tikzpicture}[baseline={([yshift=-.6ex]current bounding box.center)}]
\cylinder{0}{0}{1}{1.2}
\end{tikzpicture}
\,&= 
\begin{tikzpicture}[baseline={([yshift=-.6ex]current bounding box.center)}]
\bowl{0}{1.2}{1}{0.05}
\bowlud{0}{0}{1}{0.05}
\node[scale = 0.75] at (0.4, 0.9) {$\bullet\,\bullet$};
\end{tikzpicture}
+
\begin{tikzpicture}[baseline={([yshift=-.6ex]current bounding box.center)}]
\bowl{0}{1.2}{1}{0.05}
\bowlud{0}{0}{1}{0.05}
\node[scale = 0.75] at (0.4, 0.9) {$\bullet$};
\node[scale = 0.75] at (0.4, 0.1) {$\bullet$};
\end{tikzpicture}
+
\begin{tikzpicture}[baseline={([yshift=-.6ex]current bounding box.center)}]
\bowl{0}{1.2}{1}{0.05}
\bowlud{0}{0}{1}{0.05}
\node[scale = 0.75] at (0.4, 0.1) {$\bullet\,\bullet$};
\end{tikzpicture}
-\fa \left(
\begin{tikzpicture}[baseline={([yshift=-.6ex]current bounding box.center)}]
\bowl{0}{1.2}{1}{0.05}
\bowlud{0}{0}{1}{0.05}
\node[scale = 0.75] at (0.4, 0.9) {$\bullet$};
\end{tikzpicture}
+
\begin{tikzpicture}[baseline={([yshift=-.6ex]current bounding box.center)}]
\bowl{0}{1.2}{1}{0.05}
\bowlud{0}{0}{1}{0.05}
\node[scale = 0.75] at (0.4, 0.1) {$\bullet$};
\end{tikzpicture}
\right) - \fb
\begin{tikzpicture}[baseline={([yshift=-.6ex]current bounding box.center)}]
\bowl{0}{1.2}{1}{0.05}
\bowlud{0}{0}{1}{0.05}
\end{tikzpicture}
\tag{CN} \label{eq:cn}\\[0.2cm]
\begin{tikzpicture}[baseline={([yshift=-.6ex]current bounding box.center)}]
\sphere{0}{0}{0.45}
\end{tikzpicture}
&= 
\begin{tikzpicture}[baseline={([yshift=-.6ex]current bounding box.center)}]
\sphere{0}{0}{0.45}
\node[scale = 0.75] at (0, 0.25) {$\bullet$};
\end{tikzpicture}
=0, \hspace{1cm}
\begin{tikzpicture}[baseline={([yshift=-.6ex]current bounding box.center)}]
\sphere{0}{0}{0.45}
\node[scale = 0.75] at (0, 0.25) {$\bullet\, \bullet$};
\end{tikzpicture}
= -1
\tag{S}\label{eq:s}
\end{align}
We also require a relation reflecting the evaluation on theta-foams. For non-negative integers $k,l,m$ let $\Theta(k,l,m)$ be the theta-foam with standard orientation, such that the facet with positive $z$-coordinate is dotted $k$-times, the facet with negative $z$-coordinate is dotted $l$-times, and the remaining facet is dotted $m$-times. If all $k,l,m\in \{0,1,2\}$, we set
\begin{equation}\label{eq:theta}\tag{$\Theta$}
\Theta(k,l,m) = \left\{
\begin{array}{cl}
1 & (k,l,m) = (1,2,0) \mbox{ or a cyclic permutation} \\
-1 & (k,l,m) = (2,1,0) \mbox{ or a cyclic permutation} \\
0 & \mbox{else}
\end{array}
\right.
\end{equation}
compare Figure \ref{fig:theta_example}.
\begin{figure}[ht]
\begin{tikzpicture}
\thetafoam{0}{0}{0.75}
\node at (0, 0.5) {$\bullet$};
\node at (0, -0.5) {$\bullet \, \bullet$};
\node at (1,0) {$=$};
\thetafoam{2}{0}{0.75}
\node at (2,0) {$\bullet\, \bullet$};
\node at (2, -0.5) {$\bullet$};
\node at (3,0) {$=$};
\thetafoam{4}{0}{0.75}
\node at (4,0.5) {$\bullet\,\bullet$};
\node at (4,0) {$\bullet$};
\node at (5.2,0) {$=1$};
\end{tikzpicture}
\caption{\label{fig:theta_example}Theta foams evaluated to $1$.}
\end{figure}
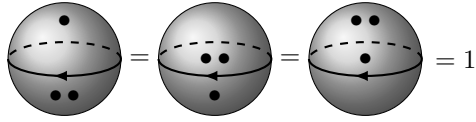

If $U$ is a closed foam, one can perform finitely many surgeries until it is a disjoint union of spheres and theta-foams. In particular, we can use the relations (\ref{eq:3d}), (\ref{eq:cn}), (\ref{eq:s}), and (\ref{eq:theta}) to assign an element $\FrMV(U)\in \ZMV$ which is well-defined by \cite{MR2100691, MR2336253}.

\begin{definition}
The category $\Fm_\ell$ is the quotient of the category $\Fm$ by the local relations (\ref{eq:3d}), (\ref{eq:cn}), (\ref{eq:s}), and (\ref{eq:theta}). For webs $\Gamma$, $\Gamma'$ we get
\[
\Hom_{\Fm_\ell}(\Gamma, \Gamma') = \Hom_{\Fm}(\Gamma, \Gamma')/\!\sim,
\]
where $\sum_i c_iU_i\sim 0$ (with $c_i\in \ZMV$ and $U_i$ foams from $\Gamma$ to $\Gamma'$) if and only if
\[
\sum_ic_i \FrMV(V'\circ U_i\circ V) = 0
\]
for all foams $V$ from $\emptyset$ to $\Gamma$ and $V'$ from $\Gamma'$ to $\emptyset$. The $\ZMV$-module $\FrMV(\Gamma)$ is defined to be
\[
\FrMV(\Gamma) = \Hom_{\Fm_\ell}(\emptyset, \Gamma).
\]
\end{definition}

As in \cite{MR2100691, MR2336253}, foams can be given a $q$-grading so that $\FrMV(\Gamma)$ is a graded $\ZMV$-module. This grading is given by
\[
|U|_q = -2\chi(U) + \chi(\partial U) + 2d(U),
\]
where $\chi$ is Euler characteristic of the underlying CW-complex, and $d(U)$ is the number of dots on $U$. As in \cite{MR2100691, MR2336253}, we have

\begin{proposition}
Let $\Gamma$ be a web. Then $\FrMV(\Gamma)$ is a graded free $\ZMV$-module of graded rank $\langle\Gamma\rangle$.\hfill\qed
\end{proposition}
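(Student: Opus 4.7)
The plan is to proceed by induction on the number of trivalent vertices of $\Gamma$, mirroring the strategy of \cite{MR2100691, MR2336253} and promoting the three defining relations of the Kuperberg bracket to graded $\ZMV$-module isomorphisms of foam state spaces.

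The base case is when $\Gamma$ consists of $k$ disjoint vertex-less loops. For a single loop $O$, the candidate basis of $\FrMV(O)$ consists of three filled-in discs with $0$, $1$, and $2$ dots, living in $q$-gradings $-2$, $0$, $2$ respectively by the grading formula $|U|_q = -2\chi(U)+\chi(\partial U)+2d(U)$. Relation (\ref{eq:3d}) shows that any foam cobordism from $\emptyset$ to $O$ reduces to a $\ZMV$-linear combination of these three, so they span. Their $\ZMV$-linear independence follows from non-degeneracy of the evaluation pairing with dotted capping discs, which by the sphere relation (\ref{eq:s}) yields a unitriangular matrix. For $k$ disjoint loops the state space is the $k$-fold tensor product over $\ZMV$, and graded rank is multiplicative under disjoint union, matching the corresponding relation for $\langle \cdot \rangle$.

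For the inductive step, one establishes three explicit graded-module decompositions corresponding to the three Kuperberg relations: circle removal $\FrMV(\Gamma \sqcup O) \cong \FrMV(\Gamma) \otimes_{\ZMV} \FrMV(O)$, a digon removal isomorphism expressing $\FrMV(\Gamma_{\mathrm{digon}})$ as a sum of two copies of $\FrMV(\Gamma_{\mathrm{edge}})$ with $q$-degree shifts by $\pm 1$, and a square-to-$I\oplus H$ isomorphism $\FrMV(\Gamma_{\square}) \cong \FrMV(\Gamma_I) \oplus \FrMV(\Gamma_H)$. In each case one writes down an explicit pair of foam cobordisms between the relevant webs whose compositions reduce to the identity by the local relations: the sphere relation (\ref{eq:s}) for the circle case, the cylinder-neck relation (\ref{eq:cn}) for the digon, and an I-H foam computation using (\ref{eq:cn}), (\ref{eq:s}), and (\ref{eq:theta}) for the square. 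A standard Euler-characteristic count on a closed trivalent planar web with at least one vertex exhibits a digon or square face, so one of the latter two isomorphisms applies and strictly reduces the vertex count.

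The main obstacle will be verifying the digon and square isomorphisms in the full Mackaay--Vaz setting, where the parameters $\fa,\fb,\fc$ enter both (\ref{eq:cn}) and the theta evaluation (\ref{eq:theta}). The computation follows \cite{MR2336253} but requires careful bookkeeping of the additional terms to ensure that the foams one writes down really do compose to the identity, and that the resulting splitting respects the $q$-grading. Once this is in place, induction combined with the fact that $\langle \cdot \rangle$ satisfies the same three decomposition relations yields the claim.
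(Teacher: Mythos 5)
Your sketch is the standard Kuperberg-relation induction of Khovanov and Mackaay--Vaz, which is exactly what the paper invokes by citation (the proposition carries a \qed with no proof), so the approach matches. Two small imprecisions worth tightening if you were to flesh it out: reducing an arbitrary foam $\emptyset\to O$ to dotted discs uses (\ref{eq:cn}) and surgery, not (\ref{eq:3d}) alone, and the evaluation pairing on $\FrMV(O)$ is anti-triangular with $-1$'s on the anti-diagonal rather than unitriangular, though the determinant is still a unit as required.
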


If $\psi\colon \ZMV\to\Lambda$ is a ring homomorphism and $\Fr$ the base change system, we get an analogous category $\Fm^\psi_\ell$ where the morphism sets are $\Lambda$-modules, and a free $\Lambda$-module $\Fr(\Gamma)$ for every web. Furthermore, if $\Lambda$ is graded and $\psi$-grading preserving, the graded rank of $\Fr(\Gamma)$ is still $\langle\Gamma\rangle$.

Now let $D$ be an oriented link diagram with $n$ ordered crossings $c_1,\ldots, c_n$. Let $p_+$, respectively $p_-$, be the number of positive, respective negative, crossings. Each $J\in \{0,1\}^n$ leads to a web $D_J$ via the resolution rule in Figure \ref{fig:resolutions}. For $J = (j_1,\ldots,j_n)$ let $|J| = \sum_i j_i$. Given a grading preserving base change system $\Fr$ from $\FrMV$, we define a $q$-graded cochain complex $\Csl(D;\Fr)$ as follows. The cochain group in homological degree\footnote{We note here that our convention for a positive crossing is the opposite as used by Khovanov in \cite{MR2100691}. Nevertheless, the $0$-resolution for what we call a positive crossing agrees with the $0$-resolution of what Khovanov calls a negative crossing.} $i-p_+$ is
\[
\Csl^{i-p_+}(D;\Fr) = \bigoplus_{J, |J| = i} q^{3p_+-2p_--i}\Fr(D_J).
\]
The boundary map is induced by the unzip-foam for a positive crossing, and the zip-foam for a negative crossing, see Figure \ref{fig:zip-foams}.

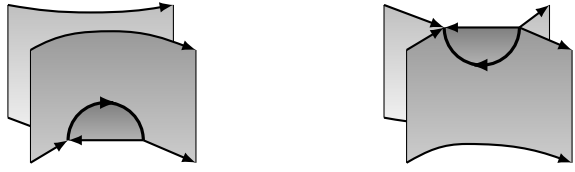
\begin{figure}[ht]
\begin{tikzpicture}
\shade[top color = gray!50, bottom color = gray!10, opacity = 0.75] (-0.3, 0.6) -- (0.5, 0.3) to [out = 90, in = 180] (1, 0.8) to [out = 0, in = 90] (1.5, 0.3) -- (1.9, 0.6) -- (1.9, 2.1) to [out = 190, in = 0] (0.8, 2) to [out = 180, in = 350] (-0.3, 2.1) -- (-0.3, 0.6);
\draw[-] (1.9, 0.6) -- (1.9, 2.1);
\draw[-] (-0.3, 0.6) -- (-0.3, 2.1);
\shade[top color = gray, bottom color = gray!60] (0.5, 0.3) to [out = 90, in = 180] (1, 0.8) to [out = 0, in = 90] (1.5, 0.3) -- (0.5, 0.3);
\draw[->, thick, >=latex] (-0.3, 0.6) -- (0.5, 0.3);
\draw[->, thick, >=latex] (1.5, 0.3) -- (1.9, 0.6);
\shade[top color = gray!80, bottom color = gray!40, opacity = 0.75] (0,0) -- (0.5, 0.3) to [out = 90, in = 180] (1, 0.8) to [out = 0, in = 90] (1.5, 0.3) -- (2.2, 0) -- (2.2, 1.5) to [out = 160, in = 0] (1.1, 1.75) to [out = 180, in = 30] (0, 1.5) -- (0,0);
\draw[-] (0,0) -- (0, 1.5);
\draw[-] (2.2, 0) -- (2.2, 1.5);
\draw[->, >=latex, thick] (0,0) -- (0.5, 0.3);
\draw[<-, >=latex, thick] (0.5, 0.3) -- (1.5, 0.3);
\draw[->, >=latex, thick] (1.5, 0.3) -- (2.2, 0);
\draw[-, very thick] (0.5, 0.3) to [out = 90, in = 180] (1, 0.8) to [out = 0, in = 90] (1.5, 0.3);
\draw[->, very thick, >=latex] (1.1,0.8) -- (1.15, 0.8);
\draw[->, >=latex, thick] (0, 1.5) to [out = 30, in = 180] (1.1, 1.75) to [out = 0, in = 160] (2.2, 1.5);
\draw[->, >= latex, thick] (-0.3, 2.1) to [out = 350, in = 180] (0.8, 2) to [out = 0, in = 190] (1.9, 2.1);
\shade[top color = gray!50, bottom color = gray!10, opacity = 0.75] (4.7, 2.1) -- (5.5, 1.8) to [out= 270, in = 180] (6, 1.3) to [out=0, in = 270] (6.5, 1.8) -- (6.9, 2.1) -- (6.9, 0.6) to [out = 190, in = 0] (5.8, 0.5) to [out = 180, in = 350] (4.7, 0.6) -- (4.7, 2.1);
\draw[->, >=latex, thick] (4.7, 0.6) to [out = 350, in = 180] (5.8, 0.5) to [out = 0, in = 190] (6.9, 0.6);
\draw[-] (4.7, 2.1) -- (4.7, 0.6);
\draw[-] (6.9, 0.6) -- (6.9, 2.1);
\shade[top color = gray, bottom color = gray!60] (5.5, 1.8) to [out = 270, in = 180] (6, 1.3) to [out = 0, in = 270] (6.5, 1.8);
\shade[top color = gray!80, bottom color = gray!40, opacity = 0.75] (5, 0) to [out = 30, in = 180] (5.8, 0.25) to [out = 0, in = 160] (7.2, 0) -- (7.2, 1.5) -- (6.5, 1.8) to [out = 270, in = 0] (6, 1.3) to [out = 180, in = 270] (5.5, 1.8) -- (5, 1.5) -- (5, 0);
\draw[->, >=latex, thick] (5, 0) to [out = 30, in = 180] (5.8, 0.25) to [out = 0, in = 160] (7.2, 0);
\draw[->, >=latex, thick] (5, 1.5) -- (5.5, 1.8);
\draw[<-, >=latex, thick] (5.5, 1.8) -- (6.5, 1.8);
\draw[->, >=latex, thick] (6.5, 1.8) -- (7.2, 1.5);
\draw[->, >=latex, thick] (6.5, 1.8) -- (6.9, 2.1);
\draw[->, >=latex, thick] (4.7, 2.1) -- (5.5, 1.8);
\draw[-, very thick] (5.5, 1.8) to [out = 270, in = 180] (6, 1.3) to [out = 0, in = 270] (6.5, 1.8);
\draw[<-, >=latex, very thick] (5.85, 1.3) -- (5.9, 1.3);
\draw[-] (5, 0) -- (5, 1.5);
\draw[-] (7.2, 0) -- (7.2, 1.5);
\end{tikzpicture}
\caption{\label{fig:zip-foams}The unzip-foam (left) associated to a positive crossing, and the zip-foam (right) associated to a negative crossing.}
\end{figure}

By \cite[Thm.2.4]{MR2336253} the corresponding homology groups $\Hsl(L;\Fr)$ are invariants of the underlying oriented link $L$. 

If the base change $\psi\colon \ZMV \to \Lambda$ does not preserve the grading, we still get a cochain complex $\Csl(D;\Fr)$ and link invariants $\Hsl(L;\Fr)$ from this construction, but without $q$-grading.

Now, for a base change $\psi\colon \ZMV \to \Lambda$ write $a = \psi(\fa), b = \psi(\fb), c = \psi(\fc)\in \Lambda$ and let $f(X) \in \Lambda[X]$ be given by
\[
f(X) = X^3 - aX^2 - bX - c.
\]
Also, for $\alpha\in \Lambda$ let
\[
f'(X) = X^3 -(a-3\alpha)X^2-(b+2a\alpha-3\alpha^2)X-(c+b\alpha+a\alpha^2-\alpha^3)\in \Lambda[X].
\]
We have already seen in Lemma \ref{lm:easyiso} that the Frobenius systems $\Fr_f$ and $\Fr_{f'}$ are isomorphic with the isomorphism induced by $\Phi_\alpha\colon \Lambda[X]\to \Lambda[X]$ given by $\Phi_\alpha(X) = X+\alpha$. This isomorphism extends to the cochain complexes.

\begin{proposition}\label{prp:shift_equiv}
Let $D$ be an oriented link diagram. Then $\Phi_\alpha$ induces an isomorphism of cochain complexes $\Csl(D;\Fr_f)$ and $\Csl(D;\Fr_{f'})$. Moreover, if the base change $\psi\colon \ZMV \to \Lambda$ is grading preserving and $|\alpha| = 2$, this isomorphism is grading preserving.
\end{proposition}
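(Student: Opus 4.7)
The plan is to observe that the construction of $\Csl(D;\Fr)$ depends on $\Fr$ only through the Frobenius data, so the Frobenius-system isomorphism $\Phi_\alpha$ produced by Lemma \ref{lm:easyiso} lifts functorially to the entire foam construction. Concretely, $\Phi_\alpha(X)=X+\alpha$ induces a map between the two incarnations of $\Fm_\ell^\psi$ (one built with the relations for $\Fr_f$, the other for $\Fr_{f'}$) by replacing each dot on a facet by ``dot plus $\alpha$ times the same undotted facet'' and extending $\Lambda$-linearly; on each resolution web $D_J$ this produces a candidate $\Lambda$-linear map $\Fr_f(D_J)\to \Fr_{f'}(D_J)$.

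The main step is to check that this dot-replacement descends to the quotient by the local relations (\ref{eq:3d}), (\ref{eq:cn}), (\ref{eq:s}), (\ref{eq:theta}). Any closed foam $U$ reduces, via these relations, to a $\Lambda$-linear combination of disjoint spheres and theta-foams, and the resulting evaluations on such basic pieces are determined entirely by the multiplication, counit, and comultiplication of the underlying Frobenius system. Since $\Phi_\alpha$ is an isomorphism of Frobenius systems by Lemma \ref{lm:easyiso}, one obtains $\Fr_{f'}(\Phi_\alpha(U))=\Fr_f(U)$ for every closed $U$, and hence the map passes to the $\Lambda$-module isomorphism $\Fr_f(D_J)\cong \Fr_{f'}(D_J)$ for every $J$.

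Next, I take direct sums over $J$ with $|J|=i$ to define the cochain-level map in each homological degree. The differentials of both complexes are induced by the (undotted) zip and unzip foams of Figure \ref{fig:zip-foams}; since $\Phi_\alpha$ only alters dots, it commutes with post-composition by these foams, so the direct-sum map is a chain map. The inverse is provided by $\Phi_{-\alpha}$, yielding the desired isomorphism of cochain complexes.

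For the grading claim, recall that the $q$-grading of a foam $U$ is $-2\chi(U)+\chi(\partial U)+2d(U)$, so each dot contributes $2$, matching $|X|_q=2$. If $\psi$ is grading-preserving and $|\alpha|_q=2$, then replacing $\bullet$ by $\bullet+\alpha$ keeps the $q$-grading constant, so the induced isomorphism is grading-preserving. The real work is entirely concentrated in Lemma \ref{lm:easyiso}; everything else reduces to the functoriality of the Khovanov--Mackaay--Vaz construction in the input Frobenius system, and the only mild obstacle is bookkeeping the descent to local relations, which the closed-foam evaluation argument handles uniformly.
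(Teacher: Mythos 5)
Your proposal takes the same route as the paper: define $\Phi_\alpha$ on foams by the dot-replacement $\Phi_\alpha(U)=\sum_{S\subset T_U}\alpha^{|S|}U_S$, check compatibility with the local relations, and then pass to the cochain complexes. The chain-map step (zip and unzip foams have no dots, so $\Phi_\alpha$ commutes with post-composition by them) and the grading claim are handled correctly.

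However, the descent step is not fully justified as written, and this is where the actual work of the paper's proof lies. You frame the descent as ``passing to the quotient by the local relations,'' but $\Fr_f(\Gamma)=\Hom_{\Fm_\ell}(\emptyset,\Gamma)$ is defined via the closing-off relation: $\sum_i c_i U_i\sim 0$ iff $\sum_i c_i\Fr_f(V\circ U_i)=0$ for \emph{every} foam $V\colon\Gamma\to\emptyset$. To show $\Phi_\alpha$ respects this, knowing only that $\Fr_{f'}(\Phi_\alpha(W))=\Fr_f(W)$ for closed $W$ is not enough, because $V\circ\Phi_\alpha(U)$ is in general \emph{not} of the form $\Phi_\alpha(\text{closed foam})$. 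What is missing from your argument is the multiplicativity $\Phi_\alpha(V\circ U)=\Phi_\alpha(V)\circ\Phi_\alpha(U)$, which you only invoke later for the chain-map step. With it one can write $\Fr_{f'}(V\circ\Phi_\alpha(U))=\Fr_{f'}\bigl(\Phi_\alpha(\Phi_{-\alpha}(V)\circ U)\bigr)=\Fr_f(\Phi_{-\alpha}(V)\circ U)$ and conclude, using the invertibility of $\Phi_\alpha$ — or proceed by the induction on the number of dots on $V$ used in the paper. As it stands your ``and hence the map passes to the $\Lambda$-module isomorphism'' leaps over exactly this point; the closed-foam evaluation identity alone does not yield the descent.
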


\begin{proof}
Denote by $\Fm^\Lambda$ the category whose objects are closed webs and whose morphisms are $\Lambda$-linear combinations of isotopy classes of foams.  Let $\Gamma$, $\Gamma'$ be closed webs, and let $U$ be a foam from $\Gamma$ to $\Gamma'$. Let $T_U$ be the set of dots on $U$ and for $S\subset T_U$ let $U_S$ be the foam with dots given by $T_U-S$. In particular, $U = U_\emptyset$.

Define $\Phi_\alpha^{\Gamma,\Gamma'}\colon \Hom_{\Fm^\Lambda}(\Gamma, \Gamma')\to  \Hom_{\Fm^\Lambda}(\Gamma, \Gamma')$ by
\begin{equation}\label{eq:def_reln}
\Phi_\alpha^{\Gamma,\Gamma'}(U) = \sum_{S\subset T_U} \alpha^{|S|} U_S.
\end{equation}
First notice that that $\Phi_\alpha^{\Gamma,\Gamma'}$ sends the local relations using $(a,b,c)$ to local relations using $(a-3\alpha, b+2a\alpha - 3\alpha^2, c+b\alpha+a\alpha^2-\alpha^3)$. For example, for (\ref{eq:3d}) this just reflects the fact that $\Phi_\alpha\colon A\to A'$ is a well defined ring homomorphism, while (\ref{eq:cn}) holds since this $\Phi_\alpha$ preserves the co-multiplication. 

In particular, we have
\begin{equation}\label{eq:first_step}
\Fr_f(U) = \Fr_{f'}(\Phi_\alpha^{\emptyset, \emptyset}(U))
\end{equation}
for all closed foams $U$.

Also, if $V$ is a foam from $\Gamma'$ to the closed web $\Gamma''$, then
\begin{align}\label{eq:good_compo}
\begin{split}
\Phi_\alpha^{\Gamma',\Gamma''}(V)\circ \Phi_\alpha^{\Gamma, \Gamma'}(U) &= \sum_{S_1\subset T_V} \alpha^{|S_1|} V_{S_1} \circ \sum_{S_2\subset T_U}\alpha^{|S_2|} U_{S_2} \\
&= \sum_{S\subset T_{V\circ U}} \alpha^{|S|} (V\circ U)_S = \Phi_\alpha^{\Gamma, \Gamma''}(V\circ U).
\end{split}
\end{align}
Now assume that $\sum_ic_i U_i\in \Hom_{\Fm^\Lambda}(\emptyset, \Gamma)$ satisfies $\sum_ic_i\Fr_f(VU_i) = 0$ for any foam $V$ from $\Gamma$ to $\emptyset$. We claim that $\sum_ic_i \Phi_\alpha^{\emptyset, \Gamma}(U)$ represents $0$ in the quotient category corresponding to $f'(X)$, that is, we need $\sum_ic_i\Fr_{f'}(V\circ \Phi_\alpha^{\emptyset,\Gamma}(U)) = 0$.

By (\ref{eq:good_compo}), we have
\begin{equation}\label{eq:next_step}
V\circ \Phi_\alpha^{\emptyset, \Gamma}(U) = \Phi_\alpha^{\emptyset,\emptyset}(V\circ U) - \sum_{\emptyset \not= S_1\subset T_V} \alpha^{|S_1|} V_{S_1} \circ \Phi_\alpha^{\emptyset, \Gamma}(U).
\end{equation}
We now get $\sum_ic_i\Fr_{f'}(V\circ \Phi_\alpha^{\emptyset,\Gamma}(U)) = 0$ by an induction on the number of dots on $V$, using (\ref{eq:first_step}) and (\ref{eq:next_step}).

This shows that $\Phi_\alpha^{\emptyset, \Gamma}$ induces a $\Lambda$-linear map $\Fr_f(\Gamma)\to \Fr_{f'}(\Gamma)$. Using this map with $-\alpha$ shows that it is in fact an isomorphism.
\end{proof}

If $\beta\in \Lambda$ is a unit, Lemma \ref{lm:twistbeta} describes an isomorphism between $\Fr_f$ and a twisting of $\Fr_g$, where $g(X) = X^3 - a\beta^{-1}X^2-b\beta^{-2}X-c\beta^{-3}$, induced by $\Psi_\beta(X) = \beta X$. In the case of link-complexes we can compensate for the twisting.

\begin{proposition}\label{prp:scale_equiv}
Let $D$ be an oriented link diagram. Then $\Psi_\beta$ induces an isomorphism of cochain complexes $\Csl(D;\Fr_f)$ and $\Csl(D;\Fr_g)$. Moreover, if the base change $\psi\colon \ZMV \to \Lambda$ is grading preserving and $|\beta| = 0$, this isomorphism is grading preserving.
\end{proposition}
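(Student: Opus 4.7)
The plan is to mimic the proof of Proposition \ref{prp:shift_equiv}. I would define a foam-level map
\[
\Psi_\beta^{\Gamma, \Gamma'}\colon \Hom_{\Fm^\Lambda}(\Gamma, \Gamma') \to \Hom_{\Fm^\Lambda}(\Gamma, \Gamma'), \qquad U \mapsto \beta^{d(U)} U,
\]
where $d(U)$ is the total dot count on $U$. This reflects $\Psi_\beta(X) = \beta X$, and the analogue of (\ref{eq:good_compo}) is immediate since $\beta^{d(V\circ U)} = \beta^{d(V)}\beta^{d(U)}$. A short inspection shows that this map carries the relation (\ref{eq:3d}) for coefficients $(a, b, c)$ to (\ref{eq:3d}) for $(a\beta^{-1}, b\beta^{-2}, c\beta^{-3})$, which describe $g(X)$.

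The subtlety is that Lemma \ref{lm:twistbeta} only yields the isomorphism $\Fr_f \cong \Fr_g^{\beta^{-2}}$ of Frobenius systems; dot-scaling alone will not intertwine the remaining relations (\ref{eq:cn}), (\ref{eq:s}), (\ref{eq:theta}). To compensate I would multiply each foam $U$ further by a scalar $\beta^{m(U)}$, where $m(U)$ is a topological invariant of $U$ built from its Euler characteristic, the number of components of its singular set, and a correction depending on $\partial U$. A direct check against the sphere relation (\ref{eq:s}) and the theta-foam relation (\ref{eq:theta}) pins down the value of $m$ on closed foams up to a normalisation, and the boundary correction is then forced by requiring additivity of $m$ under composition, so that the map remains functorial on $\Fm^\Lambda$. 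With these choices in place, the composite $U \mapsto \beta^{d(U)+m(U)}U$ intertwines all four local relations (\ref{eq:3d})--(\ref{eq:theta}) for $\Fr_f$ with their counterparts for $\Fr_g$.

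From here the proof runs parallel to that of Proposition \ref{prp:shift_equiv}. An induction on the number of dots in a foam $V$ from $\Gamma$ to $\emptyset$, together with the composition formula for the combined map, shows that it descends to a well-defined $\Lambda$-linear isomorphism $\Fr_f(\Gamma)\to \Fr_g(\Gamma)$ for each closed web $\Gamma$. On the chain-complex level, the differentials in $\Csl(D;\cdot)$ are induced by the zip/unzip foams of Figure \ref{fig:zip-foams}, which carry no dots and have fixed local topology; the shift of $m$ across each edge of the cube of resolutions is therefore a universal constant that can be absorbed by the standard grading conventions on $\Csl(D;\Fr)$. Invertibility is given by applying the same construction with $\beta^{-1}$. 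When $|\beta| = 0$, every factor of $\beta$ we introduce has $q$-degree $0$, so the isomorphism is grading-preserving. The main obstacle I anticipate is writing down the precise formula for $m(U)$ and checking that it simultaneously handles (\ref{eq:cn}), (\ref{eq:s}), and (\ref{eq:theta}); once the correct bookkeeping is in place, the remaining verifications reduce to direct inspection of the four local relations.
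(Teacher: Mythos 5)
Your plan is essentially the paper's proof: the paper defines $\Psi_\beta^\Gamma(U) = \beta^{d(U)-\chi(U)}U$ on foams $U$ from $\emptyset$ to $\Gamma$, so your anticipated topological correction $m(U)$ is simply $-\chi(U)$ — no need for the singular-set count or a separate boundary correction. One direct check against (\ref{eq:s}) and (\ref{eq:theta}) pins this down, exactly as you predict, and (\ref{eq:3d}) and (\ref{eq:cn}) then go through since both systems use the coefficients $(a\beta^{-1}, b\beta^{-2}, c\beta^{-3})$.

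However, the last step of your argument has a genuine gap. You claim that since the zip/unzip foams "have fixed local topology, the shift of $m$ across each edge of the cube of resolutions is therefore a universal constant that can be absorbed by the standard grading conventions." This is not true. The unzip-foam (used for a positive crossing) preserves Euler characteristic, while the zip-foam (used for a negative crossing) \emph{decreases} it by $1$. The shift is therefore $0$ or $-1$ depending on the crossing type, so it is not a single universal constant, and there is no grading convention that silently absorbs a discrepancy that accumulates differently along different paths in the cube. The paper resolves this by introducing, at each vertex $J$ of the cube, the number $z(J)$ of negative crossings that $J$ resolves by $1$, and defining the chain map on the summand $\Fr_f(D_J)$ to be $\beta^{-z(J)}\Psi_\beta^{D_J}$ rather than $\Psi_\beta^{D_J}$. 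The extra factor $\beta^{-z(J)}$ compensates exactly for the Euler-characteristic drop across the zip-foams, making the result commute with the boundary. Without this explicit per-vertex correction your map is not a chain map, so this step cannot be waved away.
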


\begin{proof}
Let $\Gamma$ be a web and $U$ a foam between $\emptyset$ and $\Gamma$. Define
\[
\Psi_\beta^\Gamma(U) = \beta^{d(U)-\chi(U)} U.
\]
It is straightforward to see that $\Psi_\beta^\Gamma$ is compatible with the relations (\ref{eq:3d}), (\ref{eq:cn}), (\ref{eq:s}), and (\ref{eq:theta}) between $\Fr_f$ and $\Fr_g$. Hence this induces an isomorphism $\Psi_\beta^\Gamma\colon \Fr_f(\Gamma) \to \Fr_g(\Gamma)$.

Now notice that adding a zip-foam to a foam $U$ decreases the Euler characteristic by $1$, while adding an unzip-foam does not change the Euler characteristic. However, if $J\in \{0,1\}^n$, there is a well defined number $z(J)$ which is the number of negative crossings that have been resolved with $1$. We then define $\Psi_\beta\colon \Csl(D;\Fr_f)\to \Csl(D;\Fr_g)$ by using $\beta^{-z(J)}\Psi_\beta^{D_J}$ between the direct summands $\Fr_f(D_J)$ and $\Fr_g(D_J)$, which because of the extra factor commutes with the boundary.
\end{proof}

\section{Equivalence classes for polynomials}\label{sec:kr_equiv}

In this section we consider a field $\F$ and polynomials $f(X)\in \F[h, X]$ of the form
\begin{equation}\label{eq:stand_poly}
f(X) = X^3 - ahX^2 - bh^2X - ch^3,
\end{equation}
where $a,b,c\in \F$. The corresponding Frobenius system over $\F[h]$ is, as before, denoted by $\Fr_f$.

Lewark and Lobb \cite{MR3458146} introduced the notion of $\KR$-equivalence classes for such polynomials to analyze the corresponding link homologies.

\begin{definition}
Two polynomials $f, g$ of the form (\ref{eq:stand_poly}) are called {\em $\KR$-equivalent} over a link $L$, if $\Csl(D;\Fr_f)$ and $\Csl(D;\Fr_g)$ are chain homotopy equivalent as graded cochain complexes over $\F[h]$ for some link diagram $D$ of $L$. The polynomials are called {\em $\KR$-equivalent}, if they are $\KR$-equivalent for all links $L$.
\end{definition}

Proposition \ref{prp:shift_equiv} and Proposition \ref{prp:scale_equiv} are simple criteria to determine $\KR$-equivalence between some polynomials, compare \cite[Prop.3.3]{MR3458146}. Furthermore, Lewark and Lobb \cite[Thm. 3.7, Cor. 3.8]{MR3458146} show that over $\F = \C$ for a fixed link $L$ there exist only finitely many $\KR$-equivalence classes, and hence only countably many $\KR$-equivalence classes. Moreover, one of these classes is generic in that it is a countable intersection of Zariski-open sets.

We note that \cite{MR3458146} also considers $\mathfrak{sl}_n$-link homologies for all positive integers $n$.  In the case $n=3$ we are now going to complement their results slightly and also consider other characteristics for $\F$.

\begin{proposition}\label{prp:stand_forms}
Let $f$ be a polynomial of the form (\ref{eq:stand_poly}) and assume that $\F$ is algebraically closed.
\begin{enumerate}
\item Assume $\chr \F \not=3$. Then $f$ is $\KR$-equivalent to the polynomial
\begin{itemize}
\item $g(X) = X^3 - h^2X - wh^3$ for some $w\in \F$, or
\item $g(X) = X^3 - h^3$, or
\item $g(X) = X^3$.
\end{itemize}
\item Assume $\chr \F = 3$. Then $f$ is $\KR$-equivalent to the polynomial
\begin{itemize}
\item $g(X) = X^3 - hX^2 - wh^3$ for some $w\in \F$ (provided $a\not=0$), or
\item $g(X) = X^3 - h^2X$ (provided $a=0$ and $b\not=0$), or
\item $g(X) = X^3$ (provided $a=b=0$).
\end{itemize}
\end{enumerate}
\end{proposition}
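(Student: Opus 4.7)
The plan is to combine Proposition \ref{prp:shift_equiv} and Proposition \ref{prp:scale_equiv}, restricted to their grading-preserving instances. Since $|h|_q = 2$, the requirement $|\alpha|_q = 2$ forces $\alpha = \lambda h$ with $\lambda \in \F$, while $|\beta|_q = 0$ forces $\beta \in \F^\ast$. Under $\Phi_{\lambda h}$ the parameter triple $(a,b,c)$ transforms to
\[
(a-3\lambda,\ b+2a\lambda-3\lambda^2,\ c+b\lambda+a\lambda^2-\lambda^3),
\]
and under $\Psi_\beta$ it transforms to $(a\beta^{-1}, b\beta^{-2}, c\beta^{-3})$. The strategy is to pick $\lambda$ and $\beta$ successively to push $f$ into the desired normal form, using that $\F$ is algebraically closed to solve the (at worst cubic) equations that arise.

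Assume first $\chr \F \neq 3$. Apply $\Phi_{\lambda h}$ with $\lambda = a/3$ to reduce to $a=0$, so that $f = X^3 - bh^2X - ch^3$. If $b\neq 0$, choose $\beta \in \F^\ast$ with $\beta^2 = -b$ and apply $\Psi_\beta$ to normalize the middle coefficient to $-1$, yielding $g = X^3 - h^2X - wh^3$. If $b=0$ and $c\neq 0$, take $\beta^3 = c$ to obtain $g = X^3 - h^3$. If $b=c=0$ we are already at $g = X^3$.

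Now suppose $\chr \F = 3$. The shift now leaves $a$ unchanged, so whether $a=0$ is an equivalence invariant (and, when $a=0$, whether $b=0$ is likewise invariant), which explains the case split in the statement. If $a\neq 0$, choose $\lambda = b/a$; then $\tilde b = b + 2a\lambda - 3\lambda^2 = 3b = 0$, and $\Psi_a$ normalizes the $X^2$-coefficient, producing $g = X^3 - hX^2 - wh^3$. If $a=0$ and $b\neq 0$, the shift gives $\tilde b = b$ and $\tilde c = c + b\lambda - \lambda^3$, so algebraic closure provides a $\lambda$ making $\tilde c = 0$, and a final $\Psi_\beta$ with $\beta^2 = b$ yields $g = X^3 - h^2X$. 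Finally, if $a=b=0$, surjectivity of the Frobenius on $\F$ allows $\lambda^3 = c$, reducing $f$ to $X^3$.

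The argument is essentially bookkeeping: the only genuine constraint is that all the chosen parameters must live in the grading-preserving subset ($\alpha \in \F h$, $\beta \in \F^\ast$), and the main subtlety — really a simple observation rather than a serious obstacle — is the characteristic $3$ case, where the $X^2$-coefficient cannot be eliminated by a shift and the three sub-cases $a\neq 0$, $a=0\neq b$, and $a=b=0$ give genuinely distinct $\KR$-equivalence classes.
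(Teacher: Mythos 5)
Your proof is correct and follows the same approach as the paper: combine the shift isomorphism of Proposition \ref{prp:shift_equiv} and the scaling isomorphism of Proposition \ref{prp:scale_equiv}, in each characteristic choosing $\lambda$ and $\beta$ to reach the normal form (the paper does scaling before shifting in the $\chr\F=3$, $a\neq 0$ case, but the order is immaterial). Two small nits: in the $\chr\F\neq 3$, $b\neq 0$ case you want $\beta^2=b$, not $\beta^2=-b$, to land on $X^3-h^2X-wh^3$; and your closing remark that the three sub-cases in characteristic $3$ are ``genuinely distinct $\KR$-equivalence classes'' is not established by this argument (it only shows the specific isomorphisms $\Phi_\alpha$, $\Psi_\beta$ preserve the invariants $a=0$ and $a=b=0$, not that no other chain-homotopy equivalence exists), nor is it claimed by the proposition.
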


\begin{proof}
If $\chr\F \not=3$, we can use Proposition \ref{prp:shift_equiv} to get the $X^2$-coefficient equal to $0$. If after that the $X$-coefficient is non-zero, we can normalize it to $-h^2$ using Proposition \ref{prp:scale_equiv}. This leads to the first case.

If, after making the $X^2$-coefficient equal to $0$, the $X$-coefficient is also $0$, we either have the third coefficient $0$ (leading to the third case) or we can normalize the third coefficient to $-h^3$ using Proposition \ref{prp:scale_equiv}.

If $\chr \F = 3$, we cannot make the $X^2$-coefficient equal to $0$ with Proposition \ref{prp:shift_equiv}. But if it is non-zero, we can normalize it to $-h$ using Proposition \ref{prp:scale_equiv}. After that we can use Proposition \ref{prp:shift_equiv} to make the $X$-coefficient equal to $0$. This leads to the first case.

If the $X^2$-coefficient is $0$ to begin with, we cannot change the $X$-coefficient with Proposition \ref{prp:shift_equiv}, but we can make the constant coefficient $0$. After that we are either in the third case, or use Proposition \ref{prp:scale_equiv} to get to the second case.
\end{proof}

\begin{remark}
Choosing the correct $\alpha$ or $\beta$ for Propositions \ref{prp:shift_equiv} and \ref{prp:scale_equiv} may have required us to solve a quadratic or cubic equation. Hence the assumption that $\F$ is algebraically closed in Proposition \ref{prp:stand_forms}. We now want to show that this is not a serious restriction.
\end{remark}

Since $\F$ is a field, the polynomial ring $\F[h]$ is Euclidean. In particular, we can use the standard Smith-Normal-Form algorithm on the finitely generated and free cochain complexes $\Csl(D;\Fr_f)$ over $\F[h]$. Since this can be done in a grading-preserving fashion, $\Csl(D;\Fr_f)$ is graded-chain homotopic to a direct sum of complexes of the form
\begin{itemize}
\item $C_\F = u^iq^j \F[h]$, and
\item $C_\F(k) = u^iq^j \F[h] \stackrel{h^k}{\longrightarrow} u^{i+1}q^{j+2k} \F[h]$ with $k\geq 1$.
\end{itemize}
Here $u^iq^j$ indicates the bi-degree which the copy of $\F[h]$ has in the cochain complex. 

So if $\varphi\colon \F \to \overline{\F}$ is a ring homomorphism between two fields, we get another base change Frobenius system that we denote $\Fr_f\otimes_\F \overline{\F}$. Then $\Csl(D;\Fr_f\otimes_\F \overline{\F}) = \Csl(D;\Fr_f)\otimes_\F \overline{\F}$, and this has the exact same Smith-Normal-Form decomposition as $\Csl(D;\Fr_f)$. Since the Smith-Normal-Form determines the graded chain homotopy type, we get the next lemma.

\begin{lemma}\label{lm:change_equiv}
Let $\varphi\colon \F\to \overline{\F}$ be a ring homomorphism between fields and $f, g$ polynomials of the form (\ref{eq:stand_poly}).  Then $f$ is $\KR$-equivalent to $g$ if and only if $\varphi(f)$ is $\KR$-equivalent to $\varphi(g)$.\hfill \qed
\end{lemma}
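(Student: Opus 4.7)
The plan is to invoke the Smith Normal Form decomposition described in the paragraph immediately preceding the lemma. Since any ring homomorphism between fields is injective, $\varphi$ embeds $\F$ as a subfield of $\overline{\F}$, and base change by $\varphi$ amounts to tensoring with $\overline{\F}$ over $\F$.

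First I would make explicit that the graded chain homotopy type of $\Csl(D;\Fr_f)$ over $\F[h]$ is completely determined by the multiplicities, in each bidegree, of the summands $C_\F$ and $C_\F(k)$ for $k\geq 1$ appearing in its Smith Normal Form decomposition. This follows from the uniqueness of Smith Normal Form for finitely generated modules over the Euclidean domain $\F[h]$, together with the fact that each of the complexes $C_\F$ and $C_\F(k)$ is indecomposable as a graded cochain complex. Consequently, $\Csl(D;\Fr_f)$ and $\Csl(D;\Fr_g)$ are graded chain homotopy equivalent over $\F[h]$ if and only if the multiplicities of their Smith Normal Form summands agree in every bidegree.

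Next I would observe that tensoring with $\overline{\F}$ over $\F$ respects this decomposition summand by summand: a free piece $u^iq^j\F[h]$ becomes $u^iq^j\overline{\F}[h]$, and a torsion piece $u^iq^j\F[h] \stackrel{h^k}{\longrightarrow} u^{i+1}q^{j+2k}\F[h]$ becomes $u^iq^j\overline{\F}[h] \stackrel{h^k}{\longrightarrow} u^{i+1}q^{j+2k}\overline{\F}[h]$. Since $\Csl(D;\Fr_f)\otimes_\F\overline{\F} = \Csl(D;\Fr_f\otimes_\F \overline{\F})$ (as recalled in the paragraph above the lemma), the multiplicity data of $\Csl(D;\Fr_f)$ over $\F[h]$ agree with those of its base change over $\overline{\F}[h]$, and analogously for $g$.

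Combining the two observations, $\Csl(D;\Fr_f) \simeq \Csl(D;\Fr_g)$ over $\F[h]$ if and only if their Smith multiplicities match, if and only if $\Csl(D;\Fr_f\otimes_\F \overline{\F}) \simeq \Csl(D;\Fr_g\otimes_\F \overline{\F})$ over $\overline{\F}[h]$. Letting $D$ range over diagrams of all links $L$ then gives the lemma. The only real point requiring care is the uniqueness/indecomposability assertion in the first step, but this is a standard consequence of the PID structure of $\F[h]$ and I do not expect it to pose a serious obstacle.
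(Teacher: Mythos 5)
Your proposal is correct and follows essentially the same route as the paper: decompose $\Csl(D;\Fr_f)$ over $\F[h]$ into Smith-Normal-Form summands $C_\F$ and $C_\F(k)$, note that the graded chain homotopy type is determined by the multiplicities of these pieces, and observe that base change along $\varphi$ preserves those multiplicities. You supply slightly more detail (explicit indecomposability of the summands) than the paper does, but the argument is the same.
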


Since every field includes into an algebraically closed field, we can drop this requirement from Proposition \ref{prp:stand_forms}. It will still be convenient to assume that a field is algebraically closed. 
Indeed, let $\overline{\F}$ be algebraically closed, let $w\in \overline{\F}$ and $f(X) = X^3 - h^2X - wh^3$. First observe that $f$ is $\KR$-equivalent to $g(X) = X^3 - h^2X + wh^3$, as an application of Proposition \ref{prp:scale_equiv} with $\beta = -1$. It is also $\KR$-equivalent to $h(X) = X^3 - h^2X \pm \varphi(w)h^3$ for every ring automorphism $\varphi\colon \overline{\F}\to \overline{\F}$ by Lemma \ref{lm:change_equiv}.

Now let $\F$ be the prime field contained in $\overline{\F}$. Then $f$ can also be considered a polynomial over $\F(w)[h]$, where $\F(w)\subset \overline{\F}$ is the simple extension of $\F$ by $w$. Galois Theory provides us potentially with many ring homomorphisms $\varphi\colon \F(w)\to\overline{\F}$ such that $\varphi(w)\not = w$. Although for some $w$ we may only ever get $\varphi(w) = w$. Maybe the neatest result we get is when $\F(w)$ is a transcendental extension of $\F$. In that case we can send $w$ to any other transcendental $w'$.

\begin{corollary}
Let $w,w'\in \overline{\F}$ be transcendental. Then $X^3-h^2X -wh^3$ is $\KR$-equivalent to $X^3-h^2X-w'h^3$.\hfill \qed 
\end{corollary}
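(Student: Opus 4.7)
The plan is to realise both polynomials as evaluations of a single universal polynomial over a rational function field, and then conclude via the Smith-Normal-Form bookkeeping set up in the discussion preceding Lemma \ref{lm:change_equiv}.

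First I would let $\F_0$ denote the prime field of $\overline{\F}$ and observe that, since $w$ and $w'$ are transcendental over $\F_0$, the simple extensions $\F_0(w), \F_0(w') \subset \overline{\F}$ are each abstractly isomorphic to the rational function field $\F_0(t)$. Composing these isomorphisms with inclusion into $\overline{\F}$ yields two ring homomorphisms $\iota_1, \iota_2 \colon \F_0(t) \to \overline{\F}$ with $\iota_1(t) = w$ and $\iota_2(t) = w'$.

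Next I would introduce the universal polynomial $F(X) = X^3 - h^2 X - t h^3 \in \F_0(t)[h, X]$ and, for a fixed diagram $D$ of a link $L$, the cochain complex $\Csl(D; \Fr_F)$ over the Euclidean ring $\F_0(t)[h]$. By the SNF discussion preceding Lemma \ref{lm:change_equiv} this complex is graded chain homotopy equivalent to a direct sum of blocks $C_{\F_0(t)}$ and $C_{\F_0(t)}(k)$ with $k \geq 1$. Applying $\iota_i$ to this decomposition produces the cochain complex $\Csl(D; \Fr_{\iota_i(F)})$ over $\overline{\F}[h]$ together with a direct sum decomposition into blocks $C_{\overline{\F}}$ and $C_{\overline{\F}}(k)$ with the same bidegrees and the same values of $k$, because each $\iota_i$ fixes $h$.

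Since $\iota_1(F) = X^3 - h^2 X - w h^3$ and $\iota_2(F) = X^3 - h^2 X - w' h^3$, the two resulting cochain complexes over $\overline{\F}[h]$ decompose into the same multiset of blocks and are therefore chain homotopy equivalent, giving the claimed $\KR$-equivalence. The only genuinely new input is the existence of the two embeddings $\iota_1$ and $\iota_2$, for which the transcendence hypothesis on $w$ and $w'$ is essential; once that is in place everything else is a direct reuse of the Smith-Normal-Form argument already developed above.
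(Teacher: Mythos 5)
Your proof is correct and follows essentially the same argument the paper intends: both polynomials are realised as specialisations of a single polynomial over a smaller field (you use the abstract rational function field $\F_0(t)$, the paper works directly with $\F_0(w)\subset\overline{\F}$ and a second embedding sending $w$ to $w'$), and the Smith-Normal-Form block decomposition, being preserved under base change along a field homomorphism fixing $h$, forces the two complexes over $\overline{\F}[h]$ to have the same graded chain homotopy type. You correctly cite the SNF discussion preceding Lemma \ref{lm:change_equiv} rather than the lemma itself, which is the right move since the lemma as stated is a biconditional and what is actually needed is the underlying invariance of the block decomposition.
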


In particular, for $\overline{\F} = \C$ we get that the Lewark--Lobb generic equivalence class from \cite[Cor.3.8]{MR3458146} contains all polynomials $X^3-h^2X -wh^3$ with $w\in \C$ transcendental.

\section{$\mathfrak{sl}_3$-homology for separable polynomials}
\label{sec:hom_sep}

The purpose of this section is to slightly generalize \cite[\S 3.1]{MR2336253}. There, Mackaay and Vaz show that for $\Fr_f = (\C, \C[X]/(f(X)), \varepsilon,\Delta)$ with $f(X)$ a separable (that is, a polynomial with three distinct roots) polynomial of degree $3$, the homology $\Hsl(L;\Fr_f)$ has $\C$-dimension $3^n$, where $n$ is the number of components of $L$, and is concentrated in even homological degrees with at least $3$ generators in homological degree $0$.

The Frobenius system we want to look at here is going to be of the form $\Fr_f = (\Lambda, \Lambda[X]/(f(X)), \varepsilon, \Delta)$, where $\Lambda$ is a commutative ring with $1$, and $f(X) = (X-r_1)(X-r_2)(X-r_3)$ with $r_1,r_2,r_3\in \Lambda$. We also need to require the following condition on the roots of $f$.
\begin{equation}\label{eq:root_cond}
r_i-r_j \mbox{ is a unit in }\Lambda\mbox{ for }i\not=j.
\end{equation}
This condition forces the roots to be different, and if $\Lambda$ is a field, the condition is equivalent to $f$ being separable. The purpose of this condition is to use a general form of the Chinese Remainder Theorem, compare \cite[II.Thm.2.1]{MR1878556}.

We note that for $\Lambda = \K[h]$ and $f(X) = (X-r_1h)(X-r_2h)(X-r_3h)$, where $r_1,r_2,r_3\in \K$ this condition cannot be satisfied, since $(r_i-r_j)h$ is not going to be a unit. However, the base change to $h^{-1}\Lambda = \K[h,h^{-1}]$ will often satisfy (\ref{eq:root_cond}), in particular when $\K$ is a field and all three roots are different. This ring carries a $q$-grading respected by $h^{-1}\Fr_f$, and so $\Hsl(L;h^{-1}\Fr_f)$ is bigraded.

The analogue of \cite[Thm.3.11]{MR2336253} is

\begin{theorem}\label{thm:nice_homology}
Let $L$ be a link and $\Fr_f = (\Lambda, \Lambda[X]/(f(X)), \varepsilon, \Delta)$ a Frobenius system over a commutative ring $\Lambda$ with $f(X) = (X-r_1)(X-r_2)(X-r_3)$ such that (\ref{eq:root_cond}) is satisfied. Then $\Hsl(L;\Fr_f)$ is free over $\Lambda$ of rank $3^n$, where $n$ is the number of components. Moreover, all homology is concentrated in even degrees, with at least three copies of $\Lambda$ in homological degree $0$.
\end{theorem}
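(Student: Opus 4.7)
The plan is to adapt \cite[\S3.1]{MR2336253} to this more general setting, with condition (\ref{eq:root_cond}) playing the role that separability over $\C$ plays there.

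The crucial algebraic input is the Chinese Remainder Theorem \cite[II.Thm.2.1]{MR1878556}: since $r_i-r_j$ is a unit in $\Lambda$ for $i\neq j$, the ideals $(X-r_i) \subset A = \Lambda[X]/(f(X))$ are pairwise coprime, so
$$A \;\xrightarrow{\sim}\; \prod_{i=1}^3 \Lambda[X]/(X-r_i) \;\cong\; \Lambda^3,$$
with orthogonal idempotents $e_i = \prod_{j\neq i}(X-r_j)/(r_i-r_j) \in A$ summing to $1$. These play the structural role of the idempotents used in \cite[\S3.1]{MR2336253}.

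Using these idempotents, I would transport Mackaay--Vaz's construction of canonical cycles in $\Csl(D;\Fr_f)$, one for each orientation $\mathcal{O}$ of $L$ together with a consistent choice of three colors on the link components. Their construction places the idempotent $e_{i_k}$ on each circle of the oriented resolution $D_{\mathcal{O}}$ labelled by the chosen color $r_{i_k}$ for the corresponding component; the verification that the resulting cochain is a cycle, and that any two distinct canonical cycles are independent in $\Hsl(L;\Fr_f)$, is purely algebraic and uses only addition, multiplication, and division by elements of the form $r_i-r_j$ in $\Lambda$. Counting these gives $3^n$ cycles in total, and each lives in an even homological degree because the writhe of any oriented resolution differs from the writhe of the given orientation by an even number. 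In particular, the three monochromatic cycles (all components colored by a single $r_i$, $i=1,2,3$) sit in homological degree $0$, yielding the required three copies of $\Lambda$ there.

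What remains is to show that these $3^n$ cycles generate the whole homology, which Mackaay--Vaz establish by a direct rank argument; again this uses only invertibility of the root differences. The main obstacle is therefore bookkeeping: one must revisit each step of \cite[\S3.1]{MR2336253} and certify that every division performed there is by a unit guaranteed by (\ref{eq:root_cond}), and that no step implicitly assumes that $\Lambda$ is a field (for instance via dimension counting rather than rank counting over $\Lambda$). With that verification completed, all three conclusions of the theorem follow simultaneously from the enumeration of canonical cycles above.
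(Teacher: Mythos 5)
Your overall strategy---invoke the Chinese Remainder Theorem (guaranteed by condition (\ref{eq:root_cond})), extract the idempotents, and then push the Mackaay--Vaz decomposition through over a commutative ring $\Lambda$ rather than over $\C$---matches the paper's. But two details are off, and you pass too quickly over the one step where the paper actually has to do something new.

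First, the $3^n$ generators are not indexed by orientations of $L$; the link comes with a fixed orientation throughout. They are indexed by \emph{canonical colorings}: functions from the components of $L$ to $\{r_1,r_2,r_3\}$. Each such coloring determines a unique resolution of $D$ which is a closed web (generally not a disjoint union of circles, since crossings between differently-colored strands produce the spidery resolution with trivalent vertices), and the idempotents are placed on the edges of that web. Second, your parity argument via ``writhe of the oriented resolution'' is the $\mathfrak{sl}_2$/Lee story imported verbatim; here the argument is that the spidery resolution is forced precisely at crossings between two differently-colored components, and the number of such crossings is even (essentially twice a linking number), which is why the resolution lands in even homological degree (see Remark \ref{rm:link_color}).

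Finally, ``certify that every division is by a unit and no step uses dimension counting'' undersells the genuine new input. The crux is the proposition that $R(\Gamma)\cong\bigoplus_{\varphi\in AC(\Gamma)}\Lambda$, and in particular that the idempotent $I_\varphi(\Gamma)$ vanishes whenever $\varphi$ is a \emph{non-admissible} coloring. Over $\C$ this is cheap, but over a general $\Lambda$ one must directly manufacture a unit $r$ (built from differences $r_i-r_j$) with $r\cdot I_\varphi(\Gamma)=0$ using the relations (\ref{eq:comm_relations}); the two-colors-at-a-vertex and one-color-at-a-vertex subcases require slightly different manipulations. That is the lemma your proof sketch needs to actually prove, and it is the only place where ``bookkeeping'' becomes a real argument. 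Once that is in place, the decomposition $\Csl(D;\Fr_f)\cong\bigoplus_{\varphi\in AC(D)}\Csl(D;\Fr_f)_\varphi$ and the contractibility of the non-canonical summands do go through exactly as in Mackaay--Vaz, as you suggest.
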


The proof mostly just follows \cite[\S 3.1]{MR2336253}, but there are a few steps where $\Lambda$ not being a field requires a more involved argument. We will also have use later for some of the constructions required.

As mentioned before, condition (\ref{eq:root_cond}) ensures that the Chinese Remainder Theorem applies, that is, we have an isomorphism of $\Lambda$-algebras
\[
\Lambda[X]/(f(X))\cong \Lambda[X]/(X-r_1)\oplus \Lambda[X]/(X-r_2)\oplus \Lambda[X]/(X-r_3).
\]
Moreover, the ring homomorphisms $\chi\colon \Lambda[X]/(X-r_i)\to \Lambda[X]/(f(X))$ are given by
\[
\chi(1) = I_{r_i}(X) = (X-r_j)(X-r_k)(r_i-r_j)^{-1}(r_i-r_k)^{-1},
\]
where $\{i,j,k\}=\{1,2,3\}$.

Let $\Gamma$ be a closed web and $E(\Gamma)$ be the set of edges in $\Gamma$ (this includes the loops). Following \cite[\S 6]{MR2100691} and \cite[Def.3.3]{MR2336253}, we associate to this web the commutative $\Lambda$-algebra $R(\Gamma)$ with generators $X_i$ for each $i\in E(\Gamma)$, modulo the relations
\begin{align}\label{eq:comm_relations}
\begin{split}
X_i+X_j+X_k &= r_1+r_2+r_3, \\
X_iX_j + X_jX_k+X_iX_k &= r_1r_2+r_2r_3+r_1r_3, \\
X_iX_jX_k &=r_1r_2r_3,
\end{split}
\end{align} 
for any triple of edges $i,j,k$ which share a trivalent vertex. If $i$ is a closed loop, we also require the relation
\begin{equation}\label{eq:spec_relation}
f(X_i) = (X_i-r_1)(X_i-r_2)(X_i-r_3) = 0.
\end{equation}
If $i$ is a non-closed edge, (\ref{eq:spec_relation}) follows from (\ref{eq:comm_relations}).

We now define {\em colorings} of closed webs, following \cite[Def.3.4]{MR2336253}. In \cite{MR2711478}, these were called states.

\begin{definition}
Let $\Gamma$ be a closed web. A {\em coloring} of $\Gamma$ is defined to be a function $\varphi\colon E(\Gamma)\to \{r_1,r_2,r_3\}$. The coloring is called {\em admissible}, if
\begin{align}\label{eq:admis_color}
\begin{split}
r_1+r_2+r_3 & = \varphi(i) + \varphi(j) + \varphi(k),\\
r_1r_2+r_2r_3+r_1r_3 & = \varphi(i)\varphi(j)+\varphi(j)\varphi(k)+\varphi(i)\varphi(k),\\
r_1r_2r_3 & = \varphi(i)\varphi(j)\varphi(k),
\end{split}
\end{align}
for any edges $i,j,k$ incident to the same trivalent vertex. The set of all colorings is denoted $C(\Gamma)$, with $AC(\Gamma)$ the subset of admissible colorings.
\end{definition}

Since (\ref{eq:spec_relation}) holds for all edges $i\in E(\Gamma)$, we have a homomorphism of $\Lambda$-algebras $\eta_i\colon \Lambda[X]/(f(X)) \to R(\Gamma)$ for each $i\in E(\Gamma)$ sending $X$ to $X_i$, and we define
\[
I_\varphi(\Gamma) = \prod_{i\in E(\Gamma)} \eta_i(I_{\varphi(i)}(X))\in R(\Gamma)
\]
for each coloring $\varphi\in C(\Gamma)$. As in \cite[Cor.3.6]{MR2336253}, we have
\begin{equation}\label{eq:sum_idem}
\sum_{\varphi\in C(\Gamma)} I_\varphi(\Gamma) = 1 \in R(\Gamma),
\end{equation}
and
\[
I_\varphi(\Gamma)I_\psi(\Gamma) = \left\{
\begin{array}{cc}
I_\varphi(\Gamma) & \varphi = \psi, \\
0 & \varphi\not= \psi.
\end{array}
\right.
\]

The next result is slightly more involved for general $\Lambda$ than the field version in \cite[Lm.3.7]{MR2336253}.

\begin{proposition}
Let $\Gamma$ be a closed web. Then there is an isomorphism of $\Lambda$-algebras
\[
R(\Gamma) \cong \bigoplus_{\varphi\in AC(\Gamma)}\Lambda,
\]
with the copy of $\Lambda$ corresponding to an admissible coloring $\varphi$ generated by $I_\varphi(\Gamma)$.
\end{proposition}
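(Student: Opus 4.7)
The plan is to exploit the orthogonal idempotent decomposition already furnished by~(\ref{eq:sum_idem}). Since the $I_\varphi(\Gamma)$ with $\varphi \in C(\Gamma)$ are mutually orthogonal idempotents summing to~$1$, one immediately has
\[
R(\Gamma) = \bigoplus_{\varphi \in C(\Gamma)} I_\varphi(\Gamma) R(\Gamma),
\]
so it suffices to establish two things: first, that $I_\varphi(\Gamma) = 0$ whenever $\varphi$ is not admissible; and second, that for admissible $\varphi$ the summand $I_\varphi(\Gamma) R(\Gamma)$ is free of rank one over $\Lambda$ with generator $I_\varphi(\Gamma)$.

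To handle non-admissible colorings, I would first observe that the relations~(\ref{eq:comm_relations}) are precisely the statement that the elementary symmetric functions of $X_i, X_j, X_k$ equal those of $r_1, r_2, r_3$, so at every trivalent vertex
\[
(Y - X_i)(Y - X_j)(Y - X_k) = f(Y)
\]
holds in $R(\Gamma)[Y]$. Specializing $Y = r_s$ then gives $(r_s - X_i)(r_s - X_j)(r_s - X_k) = 0$ in $R(\Gamma)$ for each $s \in \{1,2,3\}$. Now if $\varphi$ fails admissibility at some vertex, a pigeonhole argument forces some $r_s$ to be absent from the multiset $\{\varphi(i), \varphi(j), \varphi(k)\}$. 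Multiplying the previous identity by $I_\varphi(\Gamma)$ and using $X_e \cdot I_\varphi(\Gamma) = \varphi(e) \cdot I_\varphi(\Gamma)$, which is immediate from the fact that $I_\varphi(\Gamma)$ carries the idempotent factor $I_{\varphi(e)}(X_e)$, yields
\[
(r_s - \varphi(i))(r_s - \varphi(j))(r_s - \varphi(k)) \, I_\varphi(\Gamma) = 0.
\]
Each factor on the left is of the form $r_s - r_t$ with $t \neq s$, and hence a unit by~(\ref{eq:root_cond}), so $I_\varphi(\Gamma) = 0$.

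For admissible $\varphi$, the same identity $X_i \cdot I_\varphi(\Gamma) = \varphi(i) \cdot I_\varphi(\Gamma)$ shows that $I_\varphi(\Gamma) R(\Gamma) = \Lambda \cdot I_\varphi(\Gamma)$. To verify freeness of rank one I plan to construct the evaluation homomorphism $\psi_\varphi \colon R(\Gamma) \to \Lambda$ sending each $X_i$ to $\varphi(i)$. The admissibility conditions~(\ref{eq:admis_color}) are exactly what is needed for $\psi_\varphi$ to respect the vertex relations~(\ref{eq:comm_relations}); the loop relation~(\ref{eq:spec_relation}) holds automatically because $\varphi(i)$ is a root of $f$. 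A direct evaluation then gives $\psi_\varphi(I_{\varphi(i)}(X_i)) = I_{\varphi(i)}(\varphi(i)) = 1$ for every edge, and hence $\psi_\varphi(I_\varphi(\Gamma)) = 1$. Thus $\psi_\varphi$ is a retraction of the map $\lambda \mapsto \lambda I_\varphi(\Gamma)$, forcing the latter to be injective; combined with the previous paragraph this yields the claimed decomposition.

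The main obstacle, and the place where the hypothesis~(\ref{eq:root_cond}) plays an essential role, is the non-admissible vanishing step. Over a general commutative ring there is no blanket Chinese Remainder theorem to eliminate non-admissible idempotents, and the unit condition on the differences $r_i - r_j$ is precisely what promotes the polynomial identity $f(r_s) = 0$ into the ring-theoretic vanishing $I_\varphi(\Gamma) = 0$.
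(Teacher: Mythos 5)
Your proof is correct, and it takes a somewhat different and in places cleaner route than the paper's.

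For the vanishing of $I_\varphi(\Gamma)$ when $\varphi$ is not admissible, the paper argues by a two-case analysis on the size of $\Phi = \{\varphi(i),\varphi(j),\varphi(k)\}$ at a bad vertex: if $|\Phi| = 2$ it extracts a single unit $(r_s - r_t)$ annihilating $I_\varphi(\Gamma)$ from the first symmetric-function relation, and if $|\Phi| = 1$ it combines the first two relations to produce the unit $(r_2-r_1)(r_3-r_1)$ annihilating $I_\varphi(\Gamma)$. Your observation that the vertex relations are exactly the polynomial identity $(Y-X_i)(Y-X_j)(Y-X_k) = f(Y)$ in $R(\Gamma)[Y]$, after which specializing $Y = r_s$ for the absent root $r_s$ and using the eigenvector property $X_e \cdot I_\varphi(\Gamma) = \varphi(e) \cdot I_\varphi(\Gamma)$ gives a product of units annihilating $I_\varphi(\Gamma)$, handles both cases uniformly in one stroke. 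It makes the role of condition~(\ref{eq:root_cond}) completely transparent. For the rank-one freeness of the admissible summands, the paper simply invokes the corresponding argument in Mackaay--Vaz (which is stated over a field), whereas you supply a self-contained proof via the evaluation homomorphism $\psi_\varphi \colon R(\Gamma) \to \Lambda$, $X_i \mapsto \varphi(i)$. The admissibility conditions are precisely what make $\psi_\varphi$ respect the vertex relations, $\psi_\varphi(I_\varphi(\Gamma)) = 1$ gives a retraction, and injectivity of $\lambda \mapsto \lambda I_\varphi(\Gamma)$ follows; this is the natural generalization of the field argument and fills in a step the paper leaves implicit. Both routes are valid; yours has the advantage of being both unified and self-contained, at the cost of perhaps slightly more setup.
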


\begin{proof}
We first show that if the coloring $\varphi$ is not admissible, then $I_\varphi(\Gamma) = 0$. Note that $X_iI_\varphi(\Gamma) = \varphi(i)I_\varphi(\Gamma)$ for any coloring by the definition of $I_\varphi(\Gamma)$. Together with (\ref{eq:comm_relations}) this leads to
\begin{align}
(r_1+r_2+r_3) I_\varphi(\Gamma) &= (\varphi(i)+\varphi(j)+\varphi(k))I_\varphi(\Gamma), \label{eq:first_idem}\\
(r_1r_2+r_2r_3+r_1r_3)I_\varphi(\Gamma) & = (\varphi(i)\varphi(j)+\varphi(j)\varphi(k)+\varphi(i)\varphi(k)) I_\varphi(\Gamma), \label{eq:secon_idem}
\end{align}
whenever the edges $i,j,k$ have a trivalent vertex in common. If $\varphi$ is not admissible, then there exists such a triple $i,j,k$ with $\Phi = \{\varphi(i),\varphi(j),\varphi(k)\}\not= \{r_1,r_2,r_3\}$. Assume first that $\Phi$ contains two elements, say $r_1$ and $r_2$, with $r_1$ being hit twice. Then (\ref{eq:first_idem}) is $((r_1+r_2+r_3) I_\varphi(\Gamma) = (2r_1+r_2)I_\varphi(\Gamma)$, that is,
\[
(r_3-r_1)I_\varphi(\Gamma) = 0.
\]
But since $(r_3-r_1)$ is a unit, we have $I_\varphi(\Gamma) = 0$. 

If $\Phi$ only contains one element, say $r_1$, then (\ref{eq:first_idem}) reads
\begin{equation}\label{eq:sum_is_two}
(r_2+r_3)I_\varphi(\Gamma) = 2r_1 I_\varphi(\Gamma),
\end{equation}
while (\ref{eq:secon_idem}) together with (\ref{eq:sum_is_two}) implies
\begin{equation}\label{eq:prod_is_square}
r_2r_3 I_\varphi(\Gamma) = r_1^2 I_\varphi(\Gamma).
\end{equation}
For the unit $r=(r_2-r_1)(r_3-r_1)$ we get
\begin{align*}
rI_\varphi(\Gamma) & = (r_2r_3 - r_1(r_2+r_3)+r_1^2)I_\varphi(\Gamma)\\
&= (r_1^2 - 2r_1^2+r_1^2)I_\varphi(\Gamma) = 0,
\end{align*}
by using (\ref{eq:sum_is_two}) and (\ref{eq:prod_is_square}). Hence $I_\varphi(\Gamma) = 0$ also in this case. In particular, the sum in (\ref{eq:sum_idem}) need only be done over admissible colorings.

The rest of the proof works as in \cite[Lm.3.7]{MR2336253}.
\end{proof}

As in \cite{MR2100691} and \cite{MR2336253}, $R(\Gamma)$ acts on $\Fr_f(\Gamma)$: if $U$ is a foam, $X_i\cdot U$ puts a dot on the facet of $U$ that bounds the edge $i$. This action now leads to a decomposition as $\Lambda$-modules
\[
\Fr_f(\Gamma) \cong \bigoplus_{\varphi\in AC(\Gamma)} \Fr_f(\Gamma)_\varphi,
\]
where $\Fr_f(\Gamma)_\varphi = I_\varphi(\Gamma)\cdot \Fr_f(\Gamma)$. Furthermore, the proof of \cite[Lm.3.10]{MR2336253} carries over to show that for an admissible coloring $\varphi$
\[
\Fr_f(\Gamma)_\varphi \cong \Lambda
\]
as a $\Lambda$-module.

If $D$ is an oriented link diagram, let $E(D)$ be the set of edges in the graph obtained from $D$ by using crossings as vertices and forgetting whether the arcs in the diagram are overpasses or underpasses.  A {\em coloring} of $D$ is then a function $\varphi\colon E(D)\to \{r_1,r_2,r_3\}$. Such a coloring may or may not give rise to colorings of resolutions $\Gamma$ of $D$, compare Figure \ref{fig:color_link}.

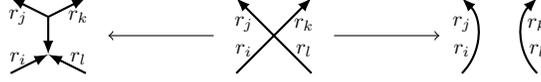
\begin{figure}[ht]
\begin{tikzpicture}
\jcrossing{0}{0}{1}
\smoothing{3}{0}{1}{->}
\spidering{-3}{0}{1}{->}
\draw[->] (1.3, 0.5) -- (2.7, 0.5);
\draw[->] (-0.3, 0.5) --  (-1.7, 0.5);
\node[scale = 0.8] at (0.1, 0.3) {$r_i$};
\node[scale = 0.8]  at (0.1, 0.65) {$r_j$};
\node[scale = 0.8] at (0.9, 0.3) {$r_l$};
\node[scale = 0.8] at (0.9, 0.65) {$r_k$};
\node[scale = 0.8] at (3, 0.3) {$r_i$};
\node[scale = 0.8]  at (3, 0.65) {$r_j$};
\node[scale = 0.8] at (4, 0.3) {$r_l$};
\node[scale = 0.8] at (4, 0.65) {$r_k$};
\node[scale = 0.8] at (-2.9, 0.2) {$r_i$};
\node[scale = 0.8]  at (-2.9, 0.75) {$r_j$};
\node[scale = 0.8] at (-2.1, 0.2) {$r_l$};
\node[scale = 0.8] at (-2.1, 0.75) {$r_k$};
\end{tikzpicture}
\caption{\label{fig:color_link}To get a coloring of the right resolution, we require $r_i=r_j$ and $r_k = r_l$. To get an admissible coloring on the left resolution, we need $\{r_i,r_l\} = \{r_j, r_k\}$ to be a two-element set.}
\end{figure} 

As in Section \ref{sec:link_hom} we write $D_J$ for $J\subset\{0,1\}^n$ for the resolutions of $D$, depending on an ordering of the crossings. 

\begin{definition}
A coloring $\varphi$ of $D$ is called {\em admissible}, if it admits an admissible coloring of at least one resolution $D_J$. An admissible coloring $\varphi$ of $D$ is called {\em canonical}, if the edges belonging to the same component of the underlying link have the same value. We denote the set of canonical colorings of $D$ by $C_c(D)$, and the set of admissible colorings by $AC(D)$.
\end{definition}

\begin{remark}\mbox{ }\label{rm:link_color}
\begin{enumerate}
\item If the coloring of $D$ is admissible, the admissible coloring of any $D_J$ is uniquely determined. Also, if $\varphi$ is a coloring of the web $D_J$, there is a unique coloring of $D$ corresponding to $\varphi$.
\item If the link has $n$ components, there are $3^n$ colorings $\varphi\colon E(D) \to \{r_1,r_2,r_3\}$ which are constant on components. Each of these colorings has exactly one resolution $D_J$ where it admits an admissible coloring of the web: in Figure \ref{fig:color_link} we have $r_i=r_k$ and $r_l = r_j$. If $r_i = r_j$ we have to resolve to the right to get an admissible coloring, and if $r_i\not= r_j$, we have to resolve to the left. In particular, all of these colorings are canonical. Furthermore, we have to resolve to the left an even number of times, so that the resolution $D_J$ is in even homological degree. A more precise statement can be obtained using linking numbers of components with different color, compare \cite[Thm.3.11]{MR2336253}.
\end{enumerate}
\end{remark}

Given an admissible coloring $\varphi\in AC(D)$, we can form
\[
\Csl(D;\Fr_f)_\varphi = \bigoplus_J q^{3p_+-2p_--|J|}\Fr_f(D_J)_\varphi,
\]
where the sum is over all $J$ such that $D_J$ admits an admissible coloring coming from $\varphi$. As in \cite[\S 3.1]{MR2336253}, this is a subcomplex of $\Csl(D;\Fr_f)$, and
\[
\Csl(D;\Fr_f) \cong \bigoplus_{\varphi\in AC(D)} \Csl(D;\Fr_f)_\varphi,
\]
as a direct sum of cochain complexes. To show Theorem \ref{thm:nice_homology} it remains to show that $\Csl(D;\Fr_f)_\varphi$ is contractible for $\varphi\in AC(D)-C_c(D)$ and isomorphic to one copy of $\Lambda$ for $\varphi\in C_c(D)$. But this follows as in the proof of \cite[Thm.3.9]{MR2336253}.

\section{Concordance invariants from $\mathfrak{sl}_3$-homology}

In this section we assume that $\F$ is a field, $f(X)=X^3-ahX^2-bh^2X-ch^3$ a polynomial with $a,b,c\in \F$ which has three different roots when viewed over $\overline{\F}[h]$, where $\overline{\F}$ is the algebraic closure of $\F$. Let $\Fr_f = (\F[h], \F[h,X]/(f(X)), \varepsilon, \Delta)$ be the corresponding Frobenius system, and $h^{-1}\Fr_f$ the base change system corresponding to the inclusion $\F[h]\subset \F[h,h^{-1}]$.

Let $K$ be a knot. By Theorem \ref{thm:nice_homology} $\Hsl^0(K;h^{-1}\overline{\Fr}_f)\cong (\overline{\F}[h,h^{-1}])^3$ as a graded module. Since localization is flat, $\F[h]$ is a Euclidean ring, and the inclusion $\F[h]\subset \overline{\F}[h]$ is also flat, we get an isomorphism of graded $\F[h]$-modules
\begin{equation}\label{eq:def_s_inv}
\Hsl^0(K;\Fr_f) \cong q^{s'}\F[h]\oplus q^{s''}\F[h]\oplus q^{s'''}\F[h]\oplus T,
\end{equation}
where $s'\leq s''\leq s'''$ are even integers, and $T$ is a torsion module. In particular, the $s',s'',s'''$ do not change if we pass to a field inclusion $\F\subset \overline{\F}$.

Observe that $\Hsl^{0,j}(K;h^{-1}\Fr_f)\cong \F^3$ for every even integer $j$. Now define
\begin{align*}
s'_f(K) &= \min\{j\in 2\Z\mid \dim_\F(\im (\Hsl^{0,j}(K;\Fr_f)\to \Hsl^{0,j}(K;h^{-1}\Fr_f))) \geq 1\} + 2,\\
s''_f(K) &= \min\{j\in 2\Z\mid \dim_\F(\im (\Hsl^{0,j}(K;\Fr_f)\to \Hsl^{0,j}(K;h^{-1}\Fr_f))) \geq 2\} , \\
s'''_f(K) &= \min\{j\in 2\Z\mid \dim_\F(\im (\Hsl^{0,j}(K;\Fr_f)\to \Hsl^{0,j}(K;h^{-1}\Fr_f))) \geq 3\} - 2.
\end{align*}

From (\ref{eq:def_s_inv}) we get $s'_f(K) = s'+2$, $s''_f(K) = s''$, and $s'''_f(K) = s'''-2$. We note that the shifts in the definition of $s'_f(K)$ and $s'''_f(K)$ are there to ensure that all three numbers vanish for the unknot. In analogy with the $s$-invariant from Khovanov homology one may expect that for a given knot these three numbers agree, but Lewark--Lobb \cite{MR3458146} have given examples where this is not the case, the smallest one being $10_{125}$ from Rolfsen's table.

These numbers are concordance invariants of $K$ and give lower bounds to the slice genus via
\[
|s^\ast_f(K)/4| \leq g_4(K), 
\]
see \cite{MR2554935, MR2509322}. Both papers work for general $\mathfrak{sl}_n$-link homology over $\C$. A proof along the line of Rasmussen's original argument \cite{MR2729272} can be done using the functoriality statement of \cite[\S 2.3]{MR2336253} while keeping track of the canonical generators along the cobordism.

As we already mentioned, the three numbers $s'_f(K), s''_f(K)$, and $s'''_f(K)$ need not agree. We will see later that they often do not agree for all $K$ except for one $f$. But in each characteristic there is a polynomial for which they do agree. 

Let $\chr \F \not= 3$ and let $f(X) = X^3 - h^3 = (X-h)(X^2+hX+h^2)$. For the polynomial $g(X) = (X^2+hX+h^2)$ we have $g(h) = 3h^2\not=0$, so $h$ is a simple root of $f$. Moreover, $g(X) = (X-ah)^2 = X^2 -2ahX+a^2h^2$ for some $a\in \F$ requires $a^2 = 1$, that is $a=\pm 1$. Since we ruled out characteristic $3$, this is not possible.

Hence $f(X)$ is separable over any algebraically closed field $\overline{\F}$ of characteristic different from $3$. While $f(X)$ is defined over any prime field, we will assume that there is $\rho\in \F$ with
\[
\rho^2+\rho+1 = 0.
\]
By the discussion above, this will not change the invariant. Recall the idempotents in $\F[h,X]/(X^3-h^3)$
\begin{align*}
I_1(X) &= (X-\rho h)(X-\rho^2 h) (h-\rho h)^{-1}(h-\rho^2 h)^{-1} = \frac{h^{-2}}{3} (X-\rho h)(X-\rho^2 h),\\
I_\rho(X) &=  (X-h)(X-\rho^2 h) (\rho h-h)^{-1}(\rho h-\rho^2 h)^{-1} = \frac{\rho h^{-2}}{3} (X-h)(X-\rho^2 h),\\
I_{\rho^2}(X) &=  (X-h)(X-\rho h) (\rho^2 h-h)^{-1}(\rho^2 h-\rho h)^{-1} = \frac{\rho^2 h^{-2}}{3} (X-h)(X-\rho h).
\end{align*}

Then $H^0(D; h^{-1}\Fr_{X^3-h^3})$ is freely generated by the elements
\begin{align*}
I_1 &= I_1(D_O) = I_1(X_1)\cdots I_1(X_k),\\
I_\rho &= I_\rho(D_O) = I_\rho(X_1)\cdots I_\rho(X_k),\\
I_{\rho^2} & = I_{\rho^2}(D_O) = I_{\rho^2}(X_1)\cdots I_{\rho^2}(X_k),
\end{align*}
where $D_O$ is the resolution of $D$ consisting of loops only, with $k$ loops. Note that $h^{-1}\Fr_{X^3-h^3}(D_O)\cong (\F[h,h^{-1},X]/(X^3-h^3))^k$, so $I_1,I_\rho, I_{\rho^2}$ are cochains, which by Section \ref{sec:hom_sep} are cocycles that generate $\Hsl^0(K;h^{-1}\Fr_{X^3-h^3})$.

Recall the ring automorphism $\Psi_\rho\colon \F[h, X]/(X^3-h^3) \to \F[h, X]/(X^3-h^3)$ with $\Psi_\rho(X) = \rho X$. By Proposition \ref{prp:scale_equiv} this induces a grading preserving automorphism of the cochain complexes $\Csl(D;\Fr_{X^3-h^3})$ and $\Csl(D;h^{-1}\Fr_{X^3-h^3})$, which we also denote by $\Psi_\rho$. A quick computation shows
\[
\Psi_\rho(I_1(X)) = I_{\rho^2}(X), \Psi_\rho(I_\rho(X)) = I_1, \Psi_\rho(I_{\rho^2}) = I_\rho,
\]
and therefore $\Psi_\rho$ permutes the generators $I_1, I_\rho$ and $I_{\rho^2}$ in the same way. Furthermore, $\Psi_\rho\colon \Hsl^0(K;h^{-1}\Fr_{X^3-h^3}) \to \Hsl^0(K;h^{-1}\Fr_{X^3-h^3})$ has three eigenvectors
\begin{align*}
E_1 & = I_1 + I_\rho + I_{\rho^2}, & \Psi_\rho(E_1) &= E_1,\\
E_\rho & = I_1 + \rho I_\rho + \rho^2 I_{\rho^2}, & \Psi_\rho(E_\rho) &= \rho E_\rho, \\
E_{\rho^2} &= I_1 + \rho^2 I_\rho + \rho I_{\rho^2}, & \Psi_\rho(E_{\rho^2}) &= \rho^2 E_{\rho^2}.
\end{align*}

Notice that there is a free $\F[h^3]$-cochain complex $C\subset \Csl(D;\Fr_{X^3-h^3})$ with
\[
\Csl(D;\Fr_{X^3-h^3}) \cong C \oplus hC \oplus h^2C
\]
as $\F[h^3]$-cochain complexes, where $hC = \{ h\otimes c \in \F[h]\otimes_{\F[h^3]}C\mid c \in C\}$ and similarly for $h^2C$.  In particular, $C\cong hC \cong h^2C$ as $\F[h^3]$-cochain complexes. Furthermore, we get the analogous decomposition
\[
\Csl(D;h^{-1}\Fr_{X^3-h^3}) \cong \overline{C}\oplus h\overline{C} \oplus h^2\overline{C},
\]
as $\F[h^3,h^{-3}]$-cochain complexes, with $\overline{C} = C\otimes_{\F[h^3]}\F[h^3,h^{-3}]$. The automorphism $\Psi_\rho$ keeps $C$, and therefore also $\overline{C}$ invariant. Now
\[
\Hsl^{0,2j}(K;h^{-1}\Fr_{X^3-h^3}) \cong \F^3 \cong H^{0,2j}(\overline{C}) \oplus H^{0,2j}(h\overline{C})\oplus H^{0,2j}(h^2\overline{C})
\]
and the latter three cohomology groups are all $\F$. Since $\Psi_\rho$ preserves $H^{0,2j}(\overline{C})$, and we have this is an eigenspace. In particular, $h^k E_{\rho^i}\in H^{0,2j}(\overline{C})$ for some integer $k$ and $i\in \{0,1,2\}$. Similarly, $H^{0,2j}(h\overline{C})$ and $H^{0,2j}(h^2\overline{C})$ are also eigenspaces of $\Psi_\rho$.

Consider the diagram
\[
\begin{tikzpicture}[scale = 0.825, transform shape]
\node at (0,1.5) {$\Hsl^{0,j}(K;\Fr)$};
\node at (4.2,1.5) {$H^{0,j}(C) \oplus H^{0,j}(hC) \oplus H^{0,j}(h^2C)$};
\node at (10,1.5) {$H^{0,j}(C) \oplus H^{0,j-2}(C) \oplus H^{0, j-4}(C)$};
\node at (1.25, 1.5) {$\cong$};
\node at (7.1, 1.5) {$\cong$};
\node at (-0.4,0) {$\Hsl^{0,j}(K;h^{-1}\Fr)$};
\node at (4.2,0) {$H^{0,j}(\overline{C}) \oplus H^{0,j}(h\overline{C}) \oplus H^{0,j}(h^2\overline{C})$};
\node at (10,0) {$H^{0,j}(\overline{C}) \oplus H^{0,j-2}(\overline{C}) \oplus H^{0, j-4}(\overline{C})$};
\node at (1.25, 0) {$\cong$};
\node at (7.1, 0) {$\cong$};
\draw[->] (0, 1.2) -- node [right] {$i$} (0, 0.3);
\draw[->] (2.3, 1.2) -- (2.3, 0.3);
\draw[->] (4, 1.2) -- (4, 0.3);
\draw[->] (5.9, 1.2) -- (5.9, 0.3);
\draw[->] (8, 1.2) -- (8, 0.3);
\draw[->] (9.8, 1.2) -- (9.8, 0.3);
\draw[->] (11.7, 1.2) -- (11.7, 0.3);
\end{tikzpicture}
\]

There exists a minimal $j$ such that $i\colon H^{0,j}(C)\to H^{0,j}(\overline{C}) \cong \F$ is surjective. The diagram shows that this minimal $j = s'$ from (\ref{eq:def_s_inv}), and 
\[
\im i\colon \Hsl^{0,j}(K;\Fr_{X^3-h^3}) \to \Hsl^{0,j}(K;h^{-1}\Fr_{X^3-h^3})
\]
is $1$-dimensional for $j = s'$, and at most $2$-dimensional for $j = s'+2$. In particular, there exists $a\in \Hsl^{0,s'}(K;\Fr_{X^3-h^3})$ with $i(a) = h^k E_{\rho^m}$ with $k\in \Z$ and $m\in \{0,1,2\}$ (the purpose of $k$ is to move the eigenvector into the correct $q$-degree).

Choosing a basepoint on the knot diagram $D$ gives an action of $\F[h,X]/(X^3-h^3)$ on $\Csl(D;\Fr_{X^3-h^3})$, where $X$ acts on a foam by placing a dot on the facet that bounds the arc with the basepoint. This action carries over to the localized version and commutes with the base change.

Then
\[
0 = (X-h)I_1, \hspace{1cm} 0 = (X-\rho h) I_\rho, \hspace{1cm}0 = (X-\rho^2 h) I_{\rho^2}.
\]
From this it follows that
\[
X E_1 = h E_\rho, \hspace{1cm} X E_\rho = h E_{\rho^2} \hspace{1cm} X E_{\rho^2} = h E_1,
\]
and therefore $i(Xa) = h^{k+1} E_{\rho^{m+1}}$ and $i(X^2 a) = h^{k+2} E_{\rho^{m-1}}$. Hence $s'' = s'+2$ and $s''' = s'+4$. We therefore define the $\mathfrak{sl}_3$-$s$-invariant as
\[
s^p_{\mathfrak{sl}_3}(K) = s'_{X^3-h^3}(K)/2 = s''_{X^3-h^3}(K)/2 = s'''_{X^3-h^3}(K)/2,
\]
where $p = \chr \F \not = 3$. We divide by $2$ to get the invariant closer to Rasmussen's original $s$-invariant.

Now assume that $\chr \F = 3$. We focus on the polynomial $f(X) = X^3 - h^2X = X(X-h)(X+h)$, which already factors over the prime field $\F_3$. The relevant idempotents are
\begin{align*}
I_0(X) &= -h^{-2} (X^2-h^2), \\
I_1(X) &= -h^{-2} (X^2+hX), \\
I_{-1}(X) &= -h^{-2} (X^2 - hX),
\end{align*}
giving rise to the generators $I_0, I_1, I_{-1}$ of $\Hsl^0(K;h^{-1}\Fr)$, where we drop the polynomial from the Frobenius system $\Fr$.

By Proposition \ref{prp:shift_equiv} there is a grading preserving automorphism
\[
\Phi_h\colon \Csl(D;\Fr)\to  \Csl(D;\Fr),
\]
which, after localization, permutes the generators
\[
\Phi_h(I_0) = I_{-1}, \hspace{1cm} \Phi_h(I_1) = I_0, \hspace{1cm} \Phi_h(I_{-1}) = I_1.
\]
This time, there is only one eigenvector $E_1 = I_0+I_1+I_{-1}$, but together with $J_1 = I_1 - I_{-1}$ and $J_2 = I_{-1}$ we get a basis in Jordan Normal Form
\[
\Phi_h(J_2) = J_2 + J_1, \hspace{1cm} \Phi_h(J_1) = J_1 + E_1, \hspace{1cm} \Phi_h(E_1) = E_1.
\]
In particular, the only $1$-dimensional subspace of $\Hsl^{0,j}(K;h^{-1}\Fr)$ invariant under $\Phi_h$ is the eigenspace of $E_1$, while the only $2$-dimensional $\Phi_h$-invariant subspace is generated by $E_1$ and $J_1$.

The definition of $\Phi_h$ given in (\ref{eq:def_reln}) shows that
\begin{equation}\label{eq:nice_shift}
\Phi_h(x) - x = h\, T(x),
\end{equation}
for all $x\in \Csl(D;\Fr)$, and some $T\colon \Csl(D;\Fr)\to \Csl(D;\Fr)$ lowering $q$-degree by $2$. Since multiplication by $h$ is injective and $\Phi_h-\id$ is a cochain map, $T$ also commutes with the boundary.

Let $s' = j$ be the minimal $q$-degree for which $i\colon \Hsl^{0,j}(K;\Fr)\to \Hsl^{0,j}(K;h^{-1}\Fr)$ is non-zero, and let $a\in \Hsl^{0,s'}(K;\Fr)$ be such that $i(a)\not=0$. Then $i(a)$ is an eigenvector, for otherwise $i(T(a)) \in \Hsl^{0,s'-2}(K;h^{-1}\Fr)$ satisfies
\[
i(T(a)) = h^{-1} (\Phi(i(a)) - i(a)) \not= 0,
\]
by (\ref{eq:nice_shift}), contradicting the minimality of $s'$. In particular, the image of $i$ in this $q$-degree is $1$-dimensional. Now note that $X$ acts on $\Csl(D;\Fr)$ and $\Csl(D;h^{-1}\Fr)$ as before, and
\[
X I_0 = 0, \hspace{1cm} X I_1 = h I_1, \hspace{1cm} X I_{-1} = -h I_{-1},
\]
so that
\[
X E_1 = h J_1 \mbox{ and } X (E_1-J_1) = h J_2.
\]
Hence the image of $i\colon \Hsl^{0,j}(K;\Fr)\to \Hsl^{0,j}(K;h^{-1}\Fr)$ is at least $2$-dimensional for $j = s'+2$, and $3$-dimensional for $j = s'+4$. If $i$ was also surjective for $j = s'+2$, we would use that $T(h^k J_{-1}) = J_1$, showing that the image for $j = s'$ would also be at least $2$-dimensional. Hence $s'_{X^3-h^2X}(K) = s''_{X^3-h^2X}(K) = s'''_{X^3-h^2X}(K)$ and we define
\[
s^3_{\mathfrak{sl}_3}(K) = s'_{X^3-h^2X}(K) / 2.
\]

\begin{remark}
The proof that $s'_f(K) = s''_f(K) = s'''_f(K)$ for $f(X) = X^3-h^3$ is mainly along the lines of Rasmussen's argument \cite[\S 3]{MR2729272}, compare also \cite[Thm.1.3]{MR2916277}, while the argument for $f(X) = X^3-h^2X$ in characteristic $3$ is similar to the one used in \cite[\S 2]{MR3189434} in characteristic $2$.
\end{remark}

It is shown in \cite[Thm.1.7]{MR2916277} that $s^0_{\mathfrak{sl}_3}$ induces a homomorphism from the knot-concordance group to the integers. We show that this also works in other characteristics.

\begin{theorem}\label{thm:s_hom}
Let $p$ be a prime number or $0$. Then the $\mathfrak{sl}_3$-s-invariant induces a homomorphism $s^p_{\mathfrak{sl}_3}\colon \mathfrak{C}\to \Z$ from the knot-concordance group $\mathfrak{C}$ to the integers.
\end{theorem}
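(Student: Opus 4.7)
The plan is to follow Lobb's argument for \cite[Thm.1.7]{MR2916277}, which treats the characteristic-zero case, adapting it to arbitrary characteristic and, crucially, to the $p=3$ polynomial $X^3 - h^2 X$. Since $s'_f, s''_f, s'''_f$ have already been identified as concordance invariants, $s^p_{\mathfrak{sl}_3}$ descends to a function $\mathfrak{C} \to \Z$ that vanishes on the class of the unknot (the $\pm 2$ shifts in the definitions of $s'_f$ and $s'''_f$ were put in precisely to ensure this). The entire content of the theorem is therefore the additivity
\[
s^p_{\mathfrak{sl}_3}(K_1 \# K_2) = s^p_{\mathfrak{sl}_3}(K_1) + s^p_{\mathfrak{sl}_3}(K_2).
\]

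I would establish this by tracking canonical generators through the merge and split saddle cobordisms that relate $K_1 \sqcup K_2$ with $K_1 \# K_2$. First, a K\"unneth-type observation: the $\mathfrak{sl}_3$-cochain complex of a disjoint union factors as a tensor product $\Csl(K_1 \sqcup K_2; \Fr) \cong \Csl(K_1; \Fr) \otimes_{\F[h]} \Csl(K_2; \Fr)$ compatibly with the coloring decomposition of Section \ref{sec:hom_sep}, so that a minimal-$q$-degree canonical generator on $K_1 \sqcup K_2$ is the tensor product of the canonical generators of its factors. Second, functoriality of the complexes under link cobordisms \cite[\S 2.3]{MR2336253} applied to the merge saddle $M\colon K_1 \sqcup K_2 \to K_1 \# K_2$ (of Euler characteristic $-1$) produces a cochain map of a fixed $q$-degree shift; by naturality $M_*$ commutes with $\Psi_\rho$ when $p \neq 3$, and with $\Phi_h$ when $p = 3$. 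Checking that $M_*$ sends the canonical generator on the left to a non-zero multiple of the canonical generator on $K_1 \# K_2$ yields one inequality, and the dual split saddle $M^\vee$ gives the reverse, forcing equality.

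The main obstacle is the non-vanishing of $M_*$ on canonical generators in characteristic $3$. There, $\Phi_h$ is non-diagonalisable, and the canonical generator lies in the unique one-dimensional $\Phi_h$-invariant subspace spanned by $E_1$, rather than in an eigenspace of an isolated eigenvalue. Since the operator $\Phi_h$ defined in (\ref{eq:def_reln}) is purely local on foams and commutes with composition by arbitrary foams (and in particular with the foam representing the saddle cobordism), the image under $M_*$ again lies in the $\Phi_h$-invariant subspace of the target. Combined with the compatibility of the basepoint $X$-action across the connected sum, and the explicit unknot computation carried out just before the theorem, this forces $M_*$ to send the canonical class to a non-zero multiple of $E_1$. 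For $p \neq 3$ the corresponding verification is easier because $\Psi_\rho$ is diagonalisable and eigenvectors are sent to eigenvectors of matching eigenvalue, reducing the question to the unknot.
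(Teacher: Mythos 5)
Your proposal follows the same broad outline as the paper's proof (K\"unneth factorisation of the disjoint union, a merge saddle giving one inequality, a dual argument giving the other), but it diverges at the two steps that actually carry the weight, and in both places the paper's route is simpler and your version has gaps.

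On the non-vanishing of $M_*$ on the canonical class: you try to argue via $\Phi_h$-equivariance of the cobordism map plus the basepoint $X$-action plus the unknot computation. The $\Phi_h$-equivariance only shows that the image of $i(a_1)\otimes i(a_2)$ lands in the $\Phi_h$-invariant subspace of the target; it does not by itself rule out the image being zero, and it is not explained how the basepoint action closes this gap for a general $K_1\# K_2$. The paper sidesteps all of this: at the level of the localized (separable) theory the merge saddle is simply multiplication in the Frobenius algebra, so $S(I_r\otimes I_r)=I_r$ and $S(I_r\otimes I_{r'})=0$ for $r\neq r'$. From this $S(E_1\otimes E_1)=E_1$ (resp.\ $S(E_{\rho^{m_1}}\otimes E_{\rho^{m_2}})=E_{\rho^{m_1+m_2}}$ when $p\neq 3$) follows by a one-line computation that is uniform in the characteristic, so there is no special obstacle at $p=3$ to surmount.

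On the reverse inequality: the paper does not use a split saddle at all. It instead observes that $s^p_{\mathfrak{sl}_3}(-K)=-s^p_{\mathfrak{sl}_3}(K)$, which follows from the self-duality of the Mackaay--Vaz system (Proposition \ref{prp:self_dual}) together with the definition (\ref{eq:def_s_inv}), and then applies the standard concordance trick $s(K_1)\leq s(K_1\# K_2)+s(-K_2)=s(K_1\# K_2)-s(K_2)$. Your split-saddle route is plausible, but to extract a lower bound on $s'(K_1)+s'(K_2)$ from non-vanishing of $i$ on $K_1\sqcup K_2$ you would need to run the K\"unneth decomposition of $\Hsl^0(K_1\sqcup K_2)$ and argue that the image of $i$ decomposes accordingly; none of that is spelled out, and ``forcing equality'' is too quick. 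Filling in either of these two steps as you have sketched them would take more work than the paper's direct computation; as written, both contain genuine gaps.
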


\begin{proof}
Let $K_1$ and $K_2$ be knots, $s_i = s^p_{\mathfrak{sl}_3}(K_i)$ for $i=1,2$, and $f(X)$ be the polynomial $X^3-h^3$, if $p\not=3$, and $X^3-h^2X$, if $p = 3$. 

If we denote the split union by $K_1\sqcup K_2$, we have an isomorphism of cochain complexes
\[
\Csl(D_1\sqcup D_2; \Fr_f) \cong \Csl(D_1;\Fr_f)\otimes_{\F[h]}\Csl(D_2;\Fr_f),
\]
and surgery implies a commutative diagram
\[
\begin{tikzpicture}
\node at (0,1.5) {$\Hsl^{0,j_1}(K_1;\Fr_f)\otimes \Hsl^{0,j_2}(K_2; \Fr_f)$};
\node at (0,0) {$\Hsl^{0,j_1}(K_1;h^{-1}\Fr_f)\otimes \Hsl^{0,j_2}(K_2; h^{-1}\Fr_f)$};
\node at (7, 1.5) {$\Hsl^{0,j_1+j_2+2}(K_1\# K_2; \Fr_f)$};
\node at (7, 0) {$\Hsl^{0,j_1+j_2+2}(K_1\# K_2; h^{-1}\Fr_f)$};
\draw[->] (0, 1.2) -- (0, 0.3);
\draw[->] (7, 1.2) -- (7, 0.3);
\draw[->] (2.6, 1.5) -- node [above] {$S$}  (5, 1.5);
\draw[->] (3.2, 0) -- node [above] {$S$} (4.7, 0);
\end{tikzpicture}
\]
In the bottom row, applying $S$ to the generators $I_r$ for $r$ a root of $f$ gives $S(I_r\otimes I_r) = I_r$, and $S(I_r\otimes I_{r'}) = 0$ for $r\not= r'$, as surgery is just multiplication on the Frobenius system for these generators. Hence eigenvectors are mapped to eigenvectors, and the vertical map on the right is therefore non-zero for $j_1 = s_1-2$ and $j_2 = s_2-2$, that is, for $j_1+j_2+2 = s_1+s_2-2$. This shows that
\begin{equation}\label{eq:hom_inequ}
s^p_{\mathfrak{sl}_3}(K_1\# K_2) \leq s^p_{\mathfrak{sl}_3}(K_1) + s^p_{\mathfrak{sl}_3}(K_1).
\end{equation}
By Proposition \ref{prp:self_dual} and (\ref{eq:def_s_inv}) we have $s^p_{\mathfrak{sl}_3}(-K) = -s^p_{\mathfrak{sl}_3}(K)$ for any knot $K$ with $-K$ the mirror knot. Hence (\ref{eq:hom_inequ}) is actually an equality.
\end{proof}

In \cite{MR2554935, MR2509322} the $s$-invariants are defined via a filtered complex rather than a graded complex. We can obtain the filtered version as follows. Consider the base change $\eta\colon \F[h] \to \F$ sending $h$ to $1$. This factors through $\F[h,h^{-1}]$ and we get a Frobenius system as in Example \ref{ex:easy_change} that we denote here as $\F_f$. The natural projection
\[
\eta\colon \Csl(D;\Fr_f) \to \Csl(D;\F_f)
\]
becomes injective, if we restrict to a particular grading, that is, to $\Csl^{\ast, j}(D;\Fr_f)$. This leads to the filtration
\[
\cdots F_j \subset F_{j+2} \subset \cdots \subset \Csl(D;\F_f),
\]
for even $j$ with $F_j = \eta(\Csl^{\ast, j}(D;\Fr_f))$. In particular, $F_j/ F_{j-2} \cong \Csl(D;\F)$, the $\mathfrak{sl}_3$-link homology corresponding to the polynomial $X^3$. This gives rise to a spectral sequence $(E^{i,j}_n)$ starting with $E^{i,j}_1 = \Hsl^{i,j}(L;\F)$ converging to $\Hsl(L;\F_f)$.

If $\chr \F \not=3$, we have $E^{i,j}_2 = E^{i,j}_1$ by Proposition \ref{prp:stand_forms}. Furthermore, for $f(X) = X^3-1$ the relevant pages are $E_{1+3k}$, while for $f(X) = X^3 - X$ the relevant pages are $E_{1+2k}$. In the case of $X^3-X$ this is because the cochain complex $\Csl(D;\F_f)$ splits into a direct sum
\[
\Csl(D;\F_{X^3-X}) \cong C_0\oplus C_2
\]
of cochain complexes, where $C_0$ is generated by the generators whose $q$-degree is $0 \bmod 4$, and $C_2$ by those with $q$-degree $2\bmod 4$. We note that this holds in any characteristic.

Given a link diagram $D$ denote the cocycle arising from the canonical coloring of $D$ using the constant function $0$, and giving rise to $I_0\in H^0(L;\F_{X^3-X})$, by $I_0(D) \in \Csl(D;\F_{X^3-X})$.

\begin{lemma}
Let $D$ be a diagram for the link $L$. Then $I_0(D) \in C_{2\cdot c}$, where $c$ is the number of components of $L$ $\bmod\, 2$.
\end{lemma}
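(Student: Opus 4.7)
The plan is to identify which resolution contributes to $I_0(D)$, compute the $q$-degree modulo $4$ of all its monomials, and then invoke the Seifert genus formula to finish.

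First I would determine the unique resolution $D_J$ that carries the coloring associated to the constant function $0$. By Remark \ref{rm:link_color}, a canonical coloring that assigns the same color to both strands at every crossing forces the oriented (right) smoothing in Figure \ref{fig:color_link}. In terms of Figure \ref{fig:resolutions}, this is the $1$-resolution at every positive crossing and the $0$-resolution at every negative crossing, so $|J|=p_+$ and $D_J$ is placed in homological degree $0$. The web $D_J$ is exactly the Seifert resolution of $D$: a disjoint union of $k$ oriented loops, where $k$ is the number of Seifert circles.

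Next I would track $q$-degrees. On a single oriented loop, the generators $1,X,X^2$ of $\Fr_f(\text{loop})$ sit in internal $q$-degrees $-2,0,2$, so a tensor generator $X^{d_1}\otimes\cdots\otimes X^{d_k}\in\Fr_f(D_J)$ has internal degree $-2k+2\sum d_i$. Combined with the homological shift $q^{3p_+-2p_--|J|} = q^{2(p_+-p_-)} = q^{2w}$ at $|J|=p_+$, a basis element in $\Csl^{0,\ast}(D;\F_{X^3-X})$ sits in $q$-degree $2w-2k+2\sum d_i$. Now $I_0(X) = 1 - X^2$ in $\F[X]/(X^3-X)$, so
\[
I_0(D) \;=\; \prod_{i=1}^k\bigl(1 - X_i^2\bigr) \;=\; \sum_{S\subseteq\{1,\dots,k\}} (-1)^{|S|}\prod_{i\in S} X_i^2,
\]
and every monomial has each $d_i\in\{0,2\}$, so $\sum d_i$ is even. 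Hence every monomial of $I_0(D)$ lies in $q$-degree $2w-2k+4m\equiv 2(w-k)\pmod 4$.

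Finally I would check that $2(w-k)\equiv 2c\pmod 4$, i.e.\ $w-k\equiv c\pmod 2$. For this I would apply Seifert's algorithm to $D$: the resulting surface has Euler characteristic $k-n$, where $n=p_++p_-$, and also equals $2-2g-c$ (with $c$ components). Reducing $k-n = 2-2g-c$ modulo $2$ gives $n-k\equiv c\pmod 2$, and since $n\equiv w\pmod 2$, we conclude $w-k\equiv c\pmod 2$. Therefore every monomial of $I_0(D)$ has $q$-degree $\equiv 2c\pmod 4$, which places $I_0(D)$ in $C_{2c}$.

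The only step that requires any real care is making sure the expansion of $I_0(D)$ really is homogeneous modulo $4$; once that is established, the rest is bookkeeping and the classical Seifert identity. I do not anticipate an obstacle, but it is worth emphasising that although the cochain complex has no honest $q$-grading (it carries only a filtration), the decomposition $C_0\oplus C_2$ is well-defined because the relation $X^3=X$ changes $q$-degree by a multiple of $4$.
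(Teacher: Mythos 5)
Your proof is correct, but it takes a genuinely different route from the paper's. The paper argues by reduction to the unknot: it verifies the statement for the standard unknot diagram (where $I_0(O)=1-X^2$ has both terms in $q$-degree $2\bmod 4$) and then checks, one move at a time, that births, deaths, orientation-preserving surgeries, and Reidemeister I, II, III moves all preserve the parity relation between $q$-degree and number of components. You instead compute directly on an arbitrary diagram: you identify the oriented (Seifert) resolution $D_O$ with $|J|=p_+$, observe that $I_0(D)=\prod_i(1-X_i^2)$ expands into monomials with $\sum d_i$ even so that every term sits in $q$-degree $2(w-k)\bmod 4$ (using the $q$-shift $2w$ and the internal degrees $-2,0,2$ of $1,X,X^2$ on each loop), and then close with the classical identity $\chi=k-n\equiv -c\pmod 2$ for the Seifert surface to get $w-k\equiv c\pmod 2$. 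Your approach is shorter and avoids the Reidemeister/Morse case analysis, at the cost of invoking Seifert's Euler-characteristic formula; the paper's approach is more elementary and stays entirely within the cochain-complex machinery. Both are valid. One point worth being slightly more explicit about if you were to polish this: the formula $\chi=2m-2g-c$ for a possibly disconnected Seifert surface (with $m$ components) still reduces to $\chi\equiv c\pmod 2$, so your argument does not need connectedness.

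Your closing remark is also well taken and worth keeping: because $f(X)=X^3-h^2X$ involves only even powers of $h$, the boundary of $\Csl(D;\F_{X^3-X})$ shifts $q$-degree by $0$ modulo $4$, which is exactly why the direct-sum decomposition $C_0\oplus C_2$ of the merely filtered complex is legitimate.
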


\begin{proof}
This is true for the standard diagram $O$ of the unknot, since then $I_0(O) = - (X^2-1) \in \F[X]/(X^3-X)$, and both $X^2$ and $1$ are generators of $q$-degree $2\bmod 4$.

Now if $D$ is an arbitrary link diagram, we can turn it into the standard unknot diagram in a finite sequence of Morse and Reidemeister moves. So, if the statement being true for a diagram $D'$ implies the statement being true for any $D$ obtained from $D'$ by a Morse or a Reidemeister move, we get the result.

If $D'$ is obtained from $D$ by a birth, $I_0(D') = -I_0(D)\otimes (X^2-1)$ and the $q$-degree changes by $2 \bmod 4$, but we also have an extra component. A symmetric argument works if $D'$ is obtained from $D$ by a death.

If $D'$ is obtained from $D$ by an orientation-preserving surgery, again $I_0(D')$ differs from $I_0(D)$ in that one has an extra tensor factor of $-(X^2-1)$. Since the number of components also changes by $1$, the correct statement for $D'$ implies the correct statement for $D$.

If $D'$ is obtained from $D$ by a Reidemeister I move, one of $I_0(D')$ and $I_0(D)$ has an extra factor $-(X^2-1)$ without producing a new component. However, the extra crossing results in a global shif in $q$-degree by $\pm 2$ on the cochain complex, which compensates for this.

If $D'$ is obtained from $D$ by a Reidemeister II move, we have to distinguish two cases, namely whether the two arcs point in the same direction or not. If they point in the same direction, $I_0(D') = I_0(D)$. If they point in opposite directions, we get a small extra circle in one of the smoothings, but the rest also differs by an orientation-preserving surgery. Then either $I_0(D) = I_0(D')$, or they differ by a factor of $(X^2-1)\otimes (X^2-1)$. The two extra crossings have opposite sign, so do not contribute to a global shift in $q$-degree. In particular, the correct statement for $D'$ implies the correct statement for $D$.

If $D'$ is obtained from $D$ by a Reidemeister III move, have again two cases to distinguish. One case leads to identical smoothings, while in the other case the two smoothings differ by two orientation-preserving surgeries. As the crossing signs are the same, we again obtain the correct statement for $D$ from the correct statement for $D'$.
\end{proof}

\begin{proof}[Proof of Theorem \ref{thm:first_main}]
The first two properties follow from the usual arguments. For the third property, note that since $I_0(D) \in C_2$ for a knot diagram, we get $s'''_{X^3-X}(K)\in 4\Z$, and therefore $s^3_{\mathfrak{sl}_3}(K)\in 2\Z$. Since $s^3_{\mathfrak{sl}_3}(T(2,3)) = 2$, the result follows.
\end{proof}

\section{A tangle version}

In order to perform effective computations, a tangle version for the $\mathfrak{sl}_3$-cochain complexes is needed. In \cite{MR2457839} such a tangle version is developed, but only after inverting $3$. Lewark \cite{MR3248745} describes a tangle version that allows for computations over $\Z$, albeit only for $f(X) = X^3$. From these two papers one can readily interpolate how to get the right setting for the correct tangle version over $\ZMV$, and we will only describe certain properties useful for computations.

Given a link diagram $D$, we can always find a sequence of tangle diagrams
\[
\emptyset = T_0 \subset T_1 \subset \cdots \subset T_n = D,
\]
with $T_i - T_{i-1}$ being the diagram of a single crossing. Furthermore, we can assume that each $T_i \subset B_i$ with $B_i$ a closed disc, $B_i = B_{i-1} \cup C_i$, $B_{i-1}\cap C_i = J_i$ an interval, and $C_i$ a closed disc containing $T_i-T_{i-1}$.

Since we are mostly interested in applying the scanning technique for fast calculations developed by Bar-Natan \cite{MR2320156} using the sequence of tangles above, we will restrict ourselves to tangles contained in discs $B$. Given a closed disc $B \subset \R^2$, let $\dot{B}\subset \partial B$ be an oriented $0$-dimensional compact manifold bordant to the empty set. 

A {\em web $\Gamma$ in $(B,\dot{B})$} is a finite oriented graph in $B$ which intersects $\partial B$ transversely in $\dot{B}$, possibly with vertex-less loops, such that its vertices are either of degree $3$ or of degree $1$. At vertices of degree $3$ all edges are either incoming, or all are outgoing. The vertices of degree $1$ agree with $\dot{B}$, and the induced orientation from the edge agrees with the orientation at the point in $\dot{B}$.

Foams between webs in $(B,\dot{B})$ are defined similarly, with the condition that near $\dot{B}$ they are the product foam. Then $\Fm(B,\dot{B})$ is defined as the category whose objects are webs in $(B,\dot{B})$ and whose morphisms are $\ZMV$-linear combinations of isotopy classes of foams.

As before, we want to consider $\Fm(B,\dot{B})$ modulo the relations (\ref{eq:3d}), (\ref{eq:cn}), (\ref{eq:s}) and (\ref{eq:theta}), but we will also need a few relations that are automatically true over $\Fm_\ell$.

First we need the following relations, which revolve dots around a singular arc with either orientation.
\begin{align}
\begin{tikzpicture}[baseline={([yshift=-.6ex]current bounding box.center)}, scale = 0.9, transform shape]
\tripod{0}{0}{1}{0}{0}
\node at (1.1,0.45) {$+$};
\tripod{2}{0}{0}{1}{0}
\node at (3.1,0.45) {$+$};
\tripod{4}{0}{0}{0}{1}
\end{tikzpicture}
& = \fa\hspace{0.2cm}
\begin{tikzpicture}[baseline={([yshift=-.6ex]current bounding box.center)}, scale = 0.9, transform shape]
\tripod{0}{0}{0}{0}{0}
\end{tikzpicture}
\tag{RA}\label{eq:ra}\\
\begin{tikzpicture}[baseline={([yshift=-.6ex]current bounding box.center)}, scale = 0.9, transform shape]
\tripod{0}{0}{1}{1}{0}
\node at (1.1,0.45) {$+$};
\tripod{2}{0}{0}{1}{1}
\node at (3.1,0.45) {$+$};
\tripod{4}{0}{1}{0}{1}
\end{tikzpicture}
& = -\fb \hspace{0.2cm}
\begin{tikzpicture}[baseline={([yshift=-.6ex]current bounding box.center)}, scale = 0.9, transform shape]
\tripod{0}{0}{0}{0}{0}
\end{tikzpicture}
\tag{RB}\label{eq:rb}\\
\begin{tikzpicture}[baseline={([yshift=-.6ex]current bounding box.center)}, scale = 0.9, transform shape]
\tripod{0}{0}{1}{1}{1}
\end{tikzpicture}
& = \fc \hspace{0.2cm}
\begin{tikzpicture}[baseline={([yshift=-.6ex]current bounding box.center)}, scale = 0.9, transform shape]
\tripod{0}{0}{0}{0}{0}
\end{tikzpicture}
\tag{RC}\label{eq:rc}
\end{align}

If the arc is part of a circle, these relations can be shown to hold using (\ref{eq:cn}) and (\ref{eq:theta}), so we really only need them for a singular arc which is not a circle.

We also need the {\em digon removal} relation
\begin{equation}\label{eq:dr}
\begin{tikzpicture}[baseline={([yshift=-.6ex]current bounding box.center)}, scale = 0.825, transform shape]
\shade[color = gray, opacity = 0.5] (0,0) -- (0.5, 0) -- (0.5,1.5) -- (0, 1.5);
\draw (0,0) -- (0.5,0);
\draw (0, 1.5) -- (0.5, 1.5);
\shade[color = gray, opacity = 0.5] (1.2,0) -- (1.7, 0) -- (1.7,1.5) -- (1.2, 1.5);
\draw (1.2,0) -- (1.7, 0);
\draw (1.7,1.5) -- (1.2, 1.5);
\shade[ball color = gray!40, opacity = 0.3] (0.5,0) to [out = 45, in = 180] (0.85, 0.15) to [out = 0, in = 135] (1.2,0) to (1.2, 1.5) to [out = 135, in = 0] (0.85, 1.65) to [out = 180, in = 45] (0.5, 1.5);
\draw (0.5,0) to [out = 45, in = 180] (0.85, 0.15) to [out = 0, in = 135] (1.2,0);
\draw (1.2, 1.5) to [out = 135, in = 0] (0.85, 1.65) to [out = 180, in = 45] (0.5, 1.5);
\shade[ball color = gray!40, opacity = 0.5] (0.5, 0) to [out = 315, in = 180] (0.85, -0.15) to [out = 0, in = 225] (1.2, 0) to (1.2,1.5) to [out = 225, in = 0] (0.85, 1.35) to [out = 180, in = 315] (0.5, 1.5);  
\draw (0.5, 0) to [out = 315, in = 180] (0.85, -0.15) to [out = 0, in = 225] (1.2, 0);
\draw (1.2,1.5) to [out = 225, in = 0] (0.85, 1.35) to [out = 180, in = 315] (0.5, 1.5);
\draw[very thick, ->] (0.5,0) -- (0.5,0.8);
\draw[very thick] (0.5, 0.7) -- (0.5,1.5);
\draw[very thick, <-] (1.2,0.7) -- (1.2,1.5);
\draw[very thick] (1.2, 0) -- (1.2, 0.8);
\end{tikzpicture}
=
\begin{tikzpicture}[baseline={([yshift=-.6ex]current bounding box.center)}, scale = 0.825, transform shape]
\shade[color = gray, opacity = 0.5] (0,0) -- (0.5, 0) to [out = 90, in = 180]  (0.85,0.5) to [out = 0, in = 90] (1.2,0) -- (1.7, 0) -- (1.7, 1.5) -- (1.2, 1.5) to [out = 270, in = 0] (0.85, 1) to [out = 180, in = 270] (0.5, 1.5) -- (0, 1.5);
\draw (0,0) -- (0.5,0);
\draw (0, 1.5) -- (0.5, 1.5);
\draw (1.2,0) -- (1.7, 0);
\draw (1.7,1.5) -- (1.2, 1.5);
\shade[ball color = gray!40, opacity = 0.3] (0.5,0) to [out = 45, in = 180] (0.85, 0.15) to [out = 0, in = 135] (1.2,0) to [out = 90, in = 0] (0.85, 0.5) to [out = 180, in = 90] (0.5, 0);
\shade[ball color = gray!40, opacity = 0.3] (0.5, 1.5) to [out = 45, in = 180] (0.85, 1.65) to [out = 0, in = 135] (1.2, 1.5) to [out = 270, in = 0] (0.85, 1) to [out = 180, in = 270] (0.5, 1.5);
\draw (0.5,0) to [out = 45, in = 180] (0.85, 0.15) to [out = 0, in = 135] (1.2,0);
\draw (1.2, 1.5) to [out = 135, in = 0] (0.85, 1.65) to [out = 180, in = 45] (0.5, 1.5);
\shade[ball color = gray!40, opacity = 0.5] (0.5, 0) to [out = 315, in = 180] (0.85, -0.15) to [out = 0, in = 225] (1.2, 0) to [out = 90, in = 0] (0.85, 0.5) to [out = 180, in = 90] (0.5, 0); 
\shade[ball color = gray!40, opacity = 0.5] (0.5, 1.5) to [out = 315, in = 180] (0.85, 1.35) to [out = 0, in = 225] (1.2, 1.5) to [out = 270, in = 0] (0.85, 1) to [out = 180, in = 270] (0.5, 1.5);
\draw (0.5, 0) to [out = 315, in = 180] (0.85, -0.15) to [out = 0, in = 225] (1.2, 0);
\draw (1.2,1.5) to [out = 225, in = 0] (0.85, 1.35) to [out = 180, in = 315] (0.5, 1.5);
\draw[very thick, ->] (0.5,0) to [out = 90, in = 180]  (0.85,0.5);
\draw[very thick] (0.85, 0.5) to [out = 0, in = 90] (1.2,0);
\draw[very thick, <-] (0.85,1) to [out = 0, in = 270] (1.2,1.5);
\draw[very thick] (0.5, 1.5) to [out = 270, in = 180] (0.85, 1);
\node at (0.85, 1.5) {$\bullet$};
\end{tikzpicture}
-
\begin{tikzpicture}[baseline={([yshift=-.6ex]current bounding box.center)}, scale = 0.825, transform shape]
\shade[color = gray, opacity = 0.5] (0,0) -- (0.5, 0) to [out = 90, in = 180]  (0.85,0.5) to [out = 0, in = 90] (1.2,0) -- (1.7, 0) -- (1.7, 1.5) -- (1.2, 1.5) to [out = 270, in = 0] (0.85, 1) to [out = 180, in = 270] (0.5, 1.5) -- (0, 1.5);
\draw (0,0) -- (0.5,0);
\draw (0, 1.5) -- (0.5, 1.5);
\draw (1.2,0) -- (1.7, 0);
\draw (1.7,1.5) -- (1.2, 1.5);
\shade[ball color = gray!40, opacity = 0.3] (0.5,0) to [out = 45, in = 180] (0.85, 0.15) to [out = 0, in = 135] (1.2,0) to [out = 90, in = 0] (0.85, 0.5) to [out = 180, in = 90] (0.5, 0);
\shade[ball color = gray!40, opacity = 0.3] (0.5, 1.5) to [out = 45, in = 180] (0.85, 1.65) to [out = 0, in = 135] (1.2, 1.5) to [out = 270, in = 0] (0.85, 1) to [out = 180, in = 270] (0.5, 1.5);
\draw (0.5,0) to [out = 45, in = 180] (0.85, 0.15) to [out = 0, in = 135] (1.2,0);
\draw (1.2, 1.5) to [out = 135, in = 0] (0.85, 1.65) to [out = 180, in = 45] (0.5, 1.5);
\shade[ball color = gray!40, opacity = 0.5] (0.5, 0) to [out = 315, in = 180] (0.85, -0.15) to [out = 0, in = 225] (1.2, 0) to [out = 90, in = 0] (0.85, 0.5) to [out = 180, in = 90] (0.5, 0); 
\shade[ball color = gray!40, opacity = 0.5] (0.5, 1.5) to [out = 315, in = 180] (0.85, 1.35) to [out = 0, in = 225] (1.2, 1.5) to [out = 270, in = 0] (0.85, 1) to [out = 180, in = 270] (0.5, 1.5);
\draw (0.5, 0) to [out = 315, in = 180] (0.85, -0.15) to [out = 0, in = 225] (1.2, 0);
\draw (1.2,1.5) to [out = 225, in = 0] (0.85, 1.35) to [out = 180, in = 315] (0.5, 1.5);
\draw[very thick, ->] (0.5,0) to [out = 90, in = 180]  (0.85,0.5);
\draw[very thick] (0.85, 0.5) to [out = 0, in = 90] (1.2,0);
\draw[very thick, <-] (0.85,1) to [out = 0, in = 270] (1.2,1.5);
\draw[very thick] (0.5, 1.5) to [out = 270, in = 180] (0.85, 1);
\node at (0.85, 0) {$\bullet$};
\end{tikzpicture}
\tag{DR}
\end{equation}

and the {\em square removal} relation, compare \cite[Lm.2.3]{MR2336253}.

\begin{equation}
\label{eq:sr}
\begin{tikzpicture}[baseline={([yshift=-.6ex]current bounding box.center)}, scale = 0.825, transform shape]
\shade[color = gray, opacity = 0.3] (0.4, 0.7) -- (0.9, 0.3) -- (0.9, 1.8) -- (0.4, 2.2);
\draw (0.5,0) -- (0.9, 0.3) -- (0.4, 0.7);
\shade[color = gray, opacity = 0.4] (0.5, 0) -- (0.9, 0.3) -- (0.9, 1.8) -- (0.5, 1.5);
\shade[color = gray, opacity = 0.6] (0, -0.1) -- (0.5, 0) -- (0.5, 1.5) -- (0, 1.4);
\shade[color = gray, opacity = 0.3] (0.9, 0.3) -- (1.6, 0.3) -- (1.6, 1.8) -- (0.9, 1.8);
\draw (0.9, 0.3) -- (1.6, 0.3);
\draw (0.9, 1.8) -- (1.6, 1.8);
\shade[color = gray, opacity = 0.3] (1.6, 0.3) -- (2.1, 0.4) -- (2.1, 1.9) -- (1.6, 1.8);
\draw (1.6, 0.3) -- (2.1, 0.4);
\draw (1.6, 1.8) -- (2.1, 1.9);
\draw [very thick, ->] (0.9, 0.3) -- (0.9, 1.1);
\draw [very thick] (0.9, 1) -- (0.9, 1.8); 
\draw [very thick, <-] (1.6, 1) -- (1.6, 1.8);
\draw [very thick] (1.6, 0.3) -- (1.6, 1.1);
\shade[color = gray, opacity = 0.4] (1.2, 0) -- (1.6, 0.3) -- (1.6, 1.8) -- (1.2, 1.5);
\draw (1.2, 0) -- (1.6, 0.3);
\draw (1.2, 1.5) -- (1.6, 1.8);
\shade[color = gray, opacity = 0.5] (0.5, 0) -- (1.2, 0) -- (1.2, 1.5) -- (0.5, 1.5);
\draw (0.5, 0) -- (1.2, 0);
\draw (0.5, 1.5) -- (1.2, 1.5);
\shade[color = gray, opacity = 0.6] (1.2, 0) -- (1.7, -0.4) -- (1.7, 1.1) -- (1.2, 1.5);
\draw (1.2, 0) -- (1.7, -0.4);
\draw (1.2, 1.5) -- (1.7, 1.1);
\draw [very thick] (0.5, 0) -- (0.5, 0.8);
\draw [very thick, <-] (0.5, 0.7) -- (0.5, 1.5);
\draw [very thick, ->] (1.2, 0) -- (1.2, 0.8);
\draw [very thick] (1.2, 0.7) -- (1.2, 1.5);
\draw (0, -0.1) -- (0.5,0);
\draw (0, 1.4) -- (0.5, 1.5) -- (0.9, 1.8) -- (0.4, 2.2); 
\end{tikzpicture}
= - \,
\begin{tikzpicture}[baseline={([yshift=-.6ex]current bounding box.center)}, scale = 0.825, transform shape]
\shade[color = gray, opacity = 0.3] (0.4, 0.7) -- (0.9, 0.3) to [out = 90, in = 180] (1.25, 0.5) to [out = 0, in = 90] (1.6, 0.3) -- (2.1, 0.4) -- (2.1, 1.9) -- (1.6, 1.8) to [out = 270, in = 0] (1.25, 1.6) to [out = 180, in = 270] (0.9, 1.8) -- (0.4, 2.2);
\draw (0.4, 0.7) -- (0.9, 0.3);
\draw (1.6, 0.3) -- (2.1, 0.4);
\draw (0.4, 2.2) -- (0.9, 1.8);
\draw (1.6, 1.8) -- (2.1, 1.9);
\shade[color = gray, opacity = 0.3] (0.9, 1.8) to [out = 270, in = 180] (1.25, 1.6) to [out = 0, in =270] (1.6, 1.8);
\draw (0.9, 1.8) -- (1.6, 1.8);
\shade[color = gray, opacity = 0.9] (0.9, 1.8) -- (0.5, 1.5) to [out = 270, in = 180] (0.85, 1.3) to [out = 0, in = 270] (1.2, 1.5) -- (1.6, 1.8) to [out = 270, in = 0] (1.25, 1.6) to [out = 180, in = 270] (0.9, 1.8);
\draw [very thick, ->] (1.6, 1.8) to [out = 270, in = 0] (1.25, 1.6);
\draw [very thick] (1.25, 1.6) to [out = 180, in = 270] (0.9, 1.8);
\draw (0.9, 1.8) -- (0.5, 1.5);
\draw (1.6, 1.8) -- (1.2, 1.5);
\shade[color = gray, opacity = 0.5] (0.5, 1.5) to [out = 270, in = 180] (0.85, 1.3) to [out = 0, in = 270] (1.2, 1.5);
\draw (0.5, 1.5) -- (1.2, 1.5);
\shade[color = gray, opacity = 0.3] (0.9, 0.3) -- (1.6, 0.3) to [out = 90, in = 0] (1.25, 0.5) to [out = 180, in = 90] (0.9, 0.3);
\draw (0.9, 0.3) -- (1.6, 0.3);
\shade[color = gray, opacity = 0.7] (1.2, 0) -- (1.6, 0.3) to [out = 90, in = 0] (1.25, 0.5) to [out = 180, in = 90] (0.9, 0.3) -- (0.5, 0) to [out = 90, in = 180] (0.85, 0.2) to [out = 0, in = 90] (1.2, 0);
\draw [very thick, ->] (0.9, 0.3) to [out = 90, in = 180] (1.25, 0.5);
\draw [very thick] (1.25, 0.5) to [out = 0, in = 90] (1.6, 0.3); 
\draw (0.5, 0) -- (0.9, 0.3);
\draw (1.2, 0) -- (1.6, 0.3);
\shade[color = gray, opacity = 0.6] (0, -0.1) -- (0.5, 0) to [out = 90, in = 180] (0.85, 0.2) to [out = 0, in = 90] (1.2, 0) -- (1.7, -0.4) -- (1.7, 1.1) -- (1.2, 1.5) to [out = 270, in = 0] (0.85, 1.3) to [out = 180, in = 270] (0.5, 1.5) -- (0, 1.4);
\shade[color = gray, opacity = 0.5] (0.5, 0) to [out = 90, in = 180] (0.85, 0.2) to [out = 0, in = 90] (1.2, 0);
\draw (0, -0.1) -- (0.5, 0) -- (1.2, 0) -- (1.7, -0.4);
\draw (0, 1.4) -- (0.5, 1.5);
\draw (1.2, 1.5) -- (1.7, 1.1);
\draw[very thick, ->] (1.2, 0) to [out = 90, in = 0] (0.85, 0.2);
\draw[very thick] (0.85, 0.2) to [out = 180, in = 90] (0.5, 0);
\draw [very thick, ->] (0.5, 1.5) to [out = 270, in = 180] (0.85, 1.3);
\draw [very thick] (0.85, 1.3) to [out = 0, in = 270] (1.2, 1.5);
\end{tikzpicture}
\, - \,
\begin{tikzpicture}[baseline={([yshift=-.6ex]current bounding box.center)}, scale = 0.825, transform shape]
\shade[color = gray, opacity = 0.3] (0.4, 0.7) -- (0.9, 0.3) to [out = 110, in = 250] (0.9, 1.8) -- (0.4, 2.2);
\shade[color = gray, opacity = 0.3] (2.1, 1.9) -- (1.6, 1.8) to [out = 260, in = 0] (1.3, 1.4) to [out = 180, in = 280] (1.2, 1.5) to [out = 290, in = 70] (1.2, 0) to [out = 80, in = 100] (1.6, 0.3) -- (2.1, 0.4); 
\draw (0.4, 0.7) -- (0.9, 0.3);
\draw (1.6, 0.3) -- (2.1, 0.4);
\draw (0.4, 2.2) -- (0.9, 1.8);
\draw (1.6, 1.8) -- (2.1, 1.9);
\draw (0.5, 0) -- (0.9, 0.3) -- (1.6, 0.3) -- (1.2, 0);
\shade[color = gray, opacity = 0.5] (0.8, 1.5) -- (1.2, 1.5) to [out = 280, in = 180] (1.3, 1.4) to [out = 0, in = 260] (1.6, 1.8) -- (0.9, 1.8) to [out = 260, in = 45] (0.8, 1.5);
\shade[color = gray, opacity = 0.6] (1.2, 1.5) to [out = 280, in = 180] (1.3, 1.4) to [out = 0, in = 260] (1.6, 1.8);
\shade[color = gray, opacity = 0.6] (0.5, 1.5) to [out = 280, in = 180] (0.6, 1.4) to [out = 0, in = 260] (0.9, 1.8);
\shade[color = gray, opacity = 0.7] (0.5, 0) -- (0.9, 0.3) to [out = 100, in = 0] (0.8, 0.4) to [out = 180, in = 45] (0.6, 0.3)to [out = 225, in = 80] (0.5, 0);
\shade[color = gray, opacity = 0.7] (1.2, 0) -- (1.6, 0.3) to [out = 100, in = 0] (1.5, 0.4) to [out = 180, in = 45] (1.3, 0.3)to [out = 225, in = 80] (1.2, 0);
\shade[color = gray, opacity = 0.6] (0.5, 0) to [out = 80, in = 180] (0.8, 0.4) -- (1.5, 0.4) to [out = 180, in = 80] (1.2, 0);
\shade[color = gray, opacity = 0.6] (0, -0.1) -- (0.5, 0) to [out = 80, in = 225] (0.6, 0.3) to [out = 45, in = 180] (0.8, 0.4) to [out = 100, in = 260] (0.8, 1.5) to [out = 225, in = 0] (0.6, 1.4) to [out = 180, in = 280] (0.5, 1.5) -- (0, 1.4);
\shade[color = gray, opacity = 0.6] (1.7, -0.4) -- (1.2, 0) to [out = 80, in = 280] (1.2, 1.5) -- (1.7, 1.1);
\draw [very thick, ->] (1.6, 1.8) to [out = 260, in = 0] (1.3, 1.4);
\draw [very thick] (1.3, 1.4) to [out = 180, in = 280] (1.2, 1.5);
\draw [very thick] (0.6, 1.4) to [out = 0, in = 260] (0.9, 1.8);
\draw [very thick, ->] (0.5, 1.5) to [out = 280, in = 180] (0.6, 1.4) to  [out = 0, in = 225] (0.8, 1.5);
\shade[color = gray, opacity = 0.6] (0.5,1.5) -- (1.2, 1.5) to [out = 280, in = 180] (1.3, 1.4) -- (0.6, 1.4) to [out = 180, in = 280] (0.5, 1.5);
\draw [very thick, ->] (1.2, 0) to [out = 80, in = 180] (1.5, 0.4);
\draw [very thick] (1.5, 0.4) to [out = 0, in = 100] (1.6, 0.3);
\draw [very thick, ->] (0.9, 0.3) to [out = 100, in = 0] (0.8, 0.4) to [out = 180, in = 45] (0.6, 0.3);
\draw [very thick] (0.6, 0.3) to [out = 225, in = 80] (0.5, 0);
\draw (0, -0.1) -- (0.5, 0) -- (1.2, 0) -- (1.7, -0.4);
\draw (0, 1.4) -- (0.5, 1.5) -- (1.2, 1.5) -- (1.7, 1.1);
\draw (0.5, 1.5) -- (0.9, 1.8) -- (1.6, 1.8) -- (1.2, 1.5);
\end{tikzpicture}
\tag{SR}
\end{equation}

Define $\Fm_\ell(B,\dot{B})$ to be the quotient of $\Fm(B,\dot{B})$ modulo the relations  (\ref{eq:3d}), (\ref{eq:cn}), (\ref{eq:s}), (\ref{eq:theta}), (\ref{eq:ra}), (\ref{eq:rb}), (\ref{eq:rc}), (\ref{eq:dr}), and (\ref{eq:sr}).

We can now define the cochain complex $\Csl(T;\Fr_{MV})$ of a tangle diagram $T\subset B$ over $\Fm_\ell(B,\dot{B})$ as before, and get it to be invariant under Reidemeister moves as in \cite[\S 2.2]{MR2336253}. Furthermore, we can remove digons and squares inside webs as in \cite[($D^c$), ($S^c$)]{MR3248745}. We can also remove loops, but the isomorphism is slightly more complicated than in \cite{MR3248745} because of the more complicated (\ref{eq:cn}) relation. It is given by
\[
\begin{tikzpicture}
\matrix [matrix of math nodes,left delimiter={(},right delimiter={)}] at (0,1.5)
{
- \, 
\begin{tikzpicture}[baseline={([yshift=-.6ex]current bounding box.center)}]
\bowlud{0}{0}{0.75}{0.05}
\node[scale = 0.75] at (0.3,0) {$\bullet\,\bullet$};
\end{tikzpicture} 
+ \fa \, 
\begin{tikzpicture}[baseline={([yshift=-.6ex]current bounding box.center)}]
\bowlud{0}{0}{0.75}{0.05}
\node[scale = 0.75] at (0.3,0) {$\bullet$};
\end{tikzpicture} 
+ \fb \,
\begin{tikzpicture}[baseline={([yshift=-.6ex]current bounding box.center)}]
\bowlud{0}{0}{0.75}{0.05}
\end{tikzpicture} 
\\
- \, 
\begin{tikzpicture}[baseline={([yshift=-.6ex]current bounding box.center)}]
\bowlud{0}{0}{0.75}{0.05}
\node[scale = 0.75] at (0.3,0) {$\bullet$};
\end{tikzpicture} 
+ \fa \, 
\begin{tikzpicture}[baseline={([yshift=-.6ex]current bounding box.center)}]
\bowlud{0}{0}{0.75}{0.05}
\end{tikzpicture} 
\\
-\,
\begin{tikzpicture}[baseline={([yshift=-.6ex]current bounding box.center)}]
\bowlud{0}{0}{0.75}{0.05}
\end{tikzpicture} 
\\
};
\draw[->] (-2.5, 0) -- (2.5, 0);
\node at (3,0) {$q^{-2}\,\emptyset$};
\node at (3, 0.5) {$\oplus$};
\node at (3,1) {$\emptyset$};
\node at (3,1.5) {$\oplus$};
\node at (3,2) {$q^2\,\emptyset$};
\matrix [matrix of math nodes,left delimiter={(},right delimiter={)}] at (5.2,0.85) {
\begin{tikzpicture}[baseline={([yshift=-.6ex]current bounding box.center)}]
\bowl{0}{0}{0.75}{0.05}
\end{tikzpicture} 
&
\begin{tikzpicture}[baseline={([yshift=-.6ex]current bounding box.center)}]
\bowl{0}{0}{0.75}{0.05}
\node[scale = 0.75] at (0.3,-0.1) {$\bullet$};
\end{tikzpicture} 
&
\begin{tikzpicture}[baseline={([yshift=-.6ex]current bounding box.center)}]
\bowl{0}{0}{0.75}{0.05}
\node[scale = 0.75] at (0.3,-0.1) {$\bullet\,\bullet$};
\end{tikzpicture} \\
};
\draw [->] (3.5, 0) -- (7, 0);
\draw[thick] (-3,0) circle (0.25);
\draw[thick] (7.5,0) circle (0.25);
\end{tikzpicture}
\]

To run the usual scanning algorithm of \cite{MR2320156} effectively, the only thing missing is a good way to simplify foams. In particular, we need the singular circle removal relation described in \cite[Fig.5]{MR3248745}. To see that we get the same relation, let us first derive the {\em bursting bubbles} relation of \cite[Fig.18]{MR2100691}.

\begin{lemma}\label{lm:burst_bub}
The following relations hold in $\Fm_\ell(B,\dot{B})$.
\begin{align*}
\begin{tikzpicture}[baseline={([yshift=-.6ex]current bounding box.center)}]
\bubble
\end{tikzpicture}
\, &= \, 0\,  = \,
\begin{tikzpicture}[baseline={([yshift=-.6ex]current bounding box.center)}]
\bubble
\node[scale = 0.75] at (0.4, 0.4) {$\bullet$};
\node[scale = 0.75] at (0.4, -0.4) {$\bullet$};
\end{tikzpicture}
\\
\begin{tikzpicture}[baseline={([yshift=-.6ex]current bounding box.center)}]
\bubble
\node[scale = 0.75] at (0.4, -0.4) {$\bullet$};
\end{tikzpicture}
\, & = \,
\begin{tikzpicture}[baseline={([yshift=-.6ex]current bounding box.center)}]
\fill[color = gray, opacity = 0.4] (-0.65,-0.325) -- (1, -0.325) -- (1.5, 0.325) -- (-0.15, 0.325); 
\draw (-0.65,-0.325) -- (1, -0.325) -- (1.5, 0.325) -- (-0.15, 0.325) -- (-0.65,-0.325); 
\end{tikzpicture}
\\
\begin{tikzpicture}[baseline={([yshift=-.6ex]current bounding box.center)}]
\bubble
\node[scale = 0.75] at (0.4, -0.4) {$\bullet\, \bullet$};
\end{tikzpicture}
\, &=\, - \,
\begin{tikzpicture}[baseline={([yshift=-.6ex]current bounding box.center)}]
\fill[color = gray, opacity = 0.4] (-0.65,-0.325) -- (1, -0.325) -- (1.5, 0.325) -- (-0.15, 0.325); 
\draw (-0.65,-0.325) -- (1, -0.325) -- (1.5, 0.325) -- (-0.15, 0.325) -- (-0.65,-0.325); 
\node[scale = 0.75] at (0.4, 0) {$\bullet$};
\end{tikzpicture}
\, + \fa \, 
\begin{tikzpicture}[baseline={([yshift=-.6ex]current bounding box.center)}]
\fill[color = gray, opacity = 0.4] (-0.65,-0.325) -- (1, -0.325) -- (1.5, 0.325) -- (-0.15, 0.325); 
\draw (-0.65,-0.325) -- (1, -0.325) -- (1.5, 0.325) -- (-0.15, 0.325) -- (-0.65,-0.325); 
\end{tikzpicture}
\end{align*}
\end{lemma}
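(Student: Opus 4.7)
The plan is to reduce each of the three identities to an evaluation of theta-foams by applying the neck-cutting relation \eqref{eq:cn} on a small cylindrical piece of the sheet just outside the bubble, and then invoking the theta-foam evaluation \eqref{eq:theta}. First I would isotope the sheet so that immediately outside the bubble's singular equator $C$ it contains a short cylindrical collar $N$: the top boundary of $N$ is the circle $C$ itself, and the bottom boundary bounds a small disc-shaped hole in the otherwise flat outer sheet. Applying \eqref{eq:cn} to $N$ writes it as a $\ZMV$-linear combination of six split configurations, each pairing an inner capping disc $D_{\mathrm{in}}$ attached to $C$ with an outer capping disc $D_{\mathrm{out}}$ filling the hole in the outer sheet, with dots distributed as prescribed by \eqref{eq:cn}.

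In each of the six terms the two hemispheres of the original bubble together with the inner cap $D_{\mathrm{in}}$ form three facets meeting along $C$, i.e.\ a theta-foam; meanwhile $D_{\mathrm{out}}$ merely fills the hole, restoring the flat outer sheet (possibly with extra dots from \eqref{eq:cn}). Thus each term factors as $\pm\Theta(k,l,m)$ times a flat sheet, where $k$ and $l$ are the hemisphere dots of the original bubble and $m\in\{0,1,2\}$ is the number of dots on $D_{\mathrm{in}}$. The overall sign comes from reconciling the cyclic order of facets around $C$ induced by the left-hand rule from the orientation of the equator drawn in the bubble with the standard cyclic order used to normalise $\Theta$ in \eqref{eq:theta}; the bubble's orientation turns out to reverse this order, so each theta factor picks up an extra sign.

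With this expansion in hand, the three claims reduce to mechanical substitution. For $(k,l)=(0,0)$ every coefficient $\Theta(0,0,m)$ vanishes by \eqref{eq:theta}, and the same happens for $(k,l)=(1,1)$; this yields the first two identities. For $(k,l)=(0,1)$ the only non-vanishing contribution is $\Theta(0,1,2)=1$ appearing with $0$ dots on $D_{\mathrm{out}}$, and after combining the sign from \eqref{eq:cn} with the cyclic-order reversal one obtains the plain flat sheet. For $(k,l)=(0,2)$ the non-vanishing contributions come from $\Theta(0,2,1)=-1$, which appears both with $1$ dot on $D_{\mathrm{out}}$ and, multiplied by $-\fa$, with $0$ dots on $D_{\mathrm{out}}$; collecting these terms yields $-(\text{sheet with a dot})+\fa\cdot(\text{sheet})$.

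I expect the main obstacle to be precisely the sign bookkeeping alluded to above: fixing the orientation of $C$ from the figure, deducing via the left-hand rule the cyclic order of the three facets meeting at $C$, comparing with the standard cyclic order in the definition of $\Theta$, and then reconciling these against the overall minus sign on the LHS of \eqref{eq:cn}. Once the conventions are pinned down, what remains is a routine substitution involving at most six theta-foam evaluations per case.
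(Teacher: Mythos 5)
Your proposal is correct and follows essentially the same approach as the paper: apply the neck-cutting relation \eqref{eq:cn} to a cylinder just outside the singular circle, which reduces the bubble to the flat sheet times a theta-foam, then evaluate via \eqref{eq:theta}, with a sign adjustment coming from the orientation of the bubble's singular circle relative to the standard orientation on the theta-foam. This is exactly the paper's (very terse) argument, including its remark that Khovanov's Figure 18 uses the opposite orientation convention, hence the sign difference.
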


\begin{proof}
We have a cylinder just outside the singular circle. Performing the surgery leads to the plane and a $\Theta$-foam. The result follows from (\ref{eq:cn}) and (\ref{eq:theta}). Note that \cite[Fig.18]{MR2100691} uses a different orientation on the singular circle, hence the difference in signs.
\end{proof}

\begin{lemma}
The following relation holds in $\Fm_\ell(B,\dot{B})$.
\begin{align*}
\begin{tikzpicture}[baseline={([yshift=-.6ex]current bounding box.center)}]
\cylinderslice
\end{tikzpicture}
\, & = \,
\begin{tikzpicture}[baseline={([yshift=-.6ex]current bounding box.center)}]
\resolveslice
\node[scale = 0.75] at (0.4, 0.8) {$\bullet\,\bullet$};
\node[scale = 0.75] at (0.4, -0.8) {$\bullet$};
\end{tikzpicture}
\, + \,
\begin{tikzpicture}[baseline={([yshift=-.6ex]current bounding box.center)}]
\resolveslice
\node[scale = 0.75] at (0.4, 0) {$\bullet\,\bullet$};
\node[scale = 0.75] at (0.4, 0.8) {$\bullet$};
\end{tikzpicture}
\, + \,
\begin{tikzpicture}[baseline={([yshift=-.6ex]current bounding box.center)}]
\resolveslice
\node[scale = 0.75] at (0.4, -0.8) {$\bullet\,\bullet$};
\node[scale = 0.75] at (0.4, 0) {$\bullet$};
\end{tikzpicture}
\\
& \, - \,
\begin{tikzpicture}[baseline={([yshift=-.6ex]current bounding box.center)}]
\resolveslice
\node[scale = 0.75] at (0.4, 0.8) {$\bullet\,\bullet$};
\node[scale = 0.75] at (0.4, 0) {$\bullet$};
\end{tikzpicture}
\, - \,
\begin{tikzpicture}[baseline={([yshift=-.6ex]current bounding box.center)}]
\resolveslice
\node[scale = 0.75] at (0.4, -0.8) {$\bullet\,\bullet$};
\node[scale = 0.75] at (0.4, 0.8) {$\bullet$};
\end{tikzpicture}
\, - \,
\begin{tikzpicture}[baseline={([yshift=-.6ex]current bounding box.center)}]
\resolveslice
\node[scale = 0.75] at (0.4, 0) {$\bullet\,\bullet$};
\node[scale = 0.75] at (0.4, -0.8) {$\bullet$};
\end{tikzpicture}
\end{align*}
\end{lemma}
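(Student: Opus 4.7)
The plan is to apply the neck-cutting relation (\ref{eq:cn}) to each half of the tubular portion of the cylinderslice — once to the upper tube (between the top free circle and the singular circle) and once to the lower tube (between the singular circle and the bottom free circle). Each application of (\ref{eq:cn}) replaces the corresponding tube with a signed sum of six terms, each of which consists of a free cap (at the original top or bottom circle) disjoint from a new disc whose boundary is the singular circle, decorated with $0$, $1$, or $2$ dots on each piece, with coefficients $1$, $-\fa$, and $-\fb$ as prescribed by (\ref{eq:cn}).

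After both neck cuts, the picture near the singular circle consists of three discs all meeting along the singular circle: the two new discs produced by (\ref{eq:cn}) from above and below, together with the original middle disc extending into the surrounding foam. Locally this is precisely the theta-foam configuration. The two new discs, glued to the middle disc along the singular circle, form a bubble of the type appearing in Lemma~\ref{lm:burst_bub}, and so the bubble bursting formulas allow us to transfer the dots from the CN-produced discs onto the middle disc, producing only dotted copies of the fully resolved configuration (i.e.\ resolveslice) with coefficients in $\ZMV$.

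Expanding all $36$ intermediate terms and then applying Lemma~\ref{lm:burst_bub} twice leads to a large sum that must collapse to the six-term formula on the right-hand side. The sign pattern is forced by the theta evaluation (\ref{eq:theta}): dot distributions on the three sheets at the singular circle that are cyclic permutations of $(1,2,0)$ contribute $+1$, those that are cyclic permutations of $(2,1,0)$ contribute $-1$, and all others vanish. This is exactly the pattern of signs and dot placements appearing on the right-hand side of the claimed relation.

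The key obstacle is verifying that all the $\fa$- and $\fb$-coefficients arising from (\ref{eq:cn}) and from Lemma~\ref{lm:burst_bub} cancel, leaving a clean $\Z$-linear combination of dotted resolveslices with only $\pm 1$ coefficients. A tidier alternative, which avoids the explicit expansion, is to use the fact that morphisms in $\Fm_\ell(B,\dot B)$ are determined by evaluation on arbitrary closures: one verifies the relation by pre- and post-composing both sides with arbitrary cap foams $V$, $V'$ and checking equality of the resulting elements of $\ZMV$ using the closed-foam evaluation rules (\ref{eq:3d}), (\ref{eq:cn}), (\ref{eq:s}), and (\ref{eq:theta}). Since any closure of the cylinderslice reduces to a disjoint union of spheres and theta-foams, this becomes a finite bookkeeping exercise matching the theta-foam evaluation pattern described above.
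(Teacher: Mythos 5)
Your first approach is topologically the same as the paper's: apply (\ref{eq:cn}) above and below the singular circle, then burst the resulting bubbles with Lemma~\ref{lm:burst_bub}. But you yourself flag the $\fa$- and $\fb$-cancellations as ``the key obstacle'' and then assert, rather than show, that they occur. The ingredient you are missing, and which the paper uses precisely at this point, is the pair of dot-rotation relations (\ref{eq:ra}) and (\ref{eq:rb}) around the singular arc: after the first neck cut, the paper applies (\ref{eq:ra}) twice and (\ref{eq:rb}) once to replace the $+\fa(\cdots)$ and $+\fb(\cdots)$ terms from (\ref{eq:cn}) by dotted terms with a dot moved onto the middle sheet, which reduces the six-term result to a four-term one with no $\fa,\fb$ coefficients. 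Only then does the paper apply (\ref{eq:cn}) again and Lemma~\ref{lm:burst_bub}; without this intermediate step, a brute-force expansion of your proposed $36$ terms will not visibly collapse, and the $\fa,\fb$ terms do not cancel pairwise in any obvious way --- they are \emph{traded} for dotted terms via the rotation relations. So as written there is a genuine gap: the plan names the correct topological moves but neither carries out the bookkeeping nor invokes the relations that make it terminate.

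Your second, alternative route --- checking the identity by closing up with arbitrary $V,V'$ and evaluating closed foams --- is not available at face value in this setting. The category $\Fm_\ell(B,\dot B)$ is \emph{defined} as a quotient by the listed local relations, not by the universal closure relation used for $\Fm_\ell$ on closed webs, so ``determined by evaluation on closures'' is a statement you would first have to prove for the tangle category. That is a nontrivial point the paper sidesteps by giving an explicit local derivation.
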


\begin{proof}
By (\ref{eq:cn}) we have
\begin{align*}
\begin{tikzpicture}[baseline={([yshift=-.6ex]current bounding box.center)}, scale = 0.55, transform shape]
\cylinderslice
\end{tikzpicture}
\, & = \, - \,
\begin{tikzpicture}[baseline={([yshift=-.6ex]current bounding box.center)}, scale = 0.55, transform shape]
\resolvetop
\node[scale = 0.75] at (0.4, 1) {$\bullet\,\bullet$}; 
\end{tikzpicture}
\, - \,
\begin{tikzpicture}[baseline={([yshift=-.6ex]current bounding box.center)}, scale = 0.55, transform shape]
\resolvetop
\node[scale = 0.75] at (0.4, 1) {$\bullet$}; 
\node[scale = 0.75] at (0.4, 0.1) {$\bullet$}; 
\end{tikzpicture}
\, - \,
\begin{tikzpicture}[baseline={([yshift=-.6ex]current bounding box.center)}, scale = 0.55, transform shape]
\resolvetop
\node[scale = 0.75] at (0.4, 0.1) {$\bullet\,\bullet$}; 
\end{tikzpicture}
\, + \fa \,
\begin{tikzpicture}[baseline={([yshift=-.6ex]current bounding box.center)}, scale = 0.55, transform shape]
\resolvetop
\node[scale = 0.75] at (0.4, 1) {$\bullet$}; 
\end{tikzpicture}
\, + \fa \,
\begin{tikzpicture}[baseline={([yshift=-.6ex]current bounding box.center)}, scale = 0.55, transform shape]
\resolvetop
\node[scale = 0.75] at (0.4, 0.1) {$\bullet$}; 
\end{tikzpicture}
\, + \fb \,
\begin{tikzpicture}[baseline={([yshift=-.6ex]current bounding box.center)}, scale = 0.55, transform shape]
\resolvetop
\end{tikzpicture}
\\
& \, = \, - \,
\begin{tikzpicture}[baseline={([yshift=-.6ex]current bounding box.center)}, scale = 0.55, transform shape]
\resolvetop
\node[scale = 0.75] at (0.4, 1) {$\bullet\,\bullet$}; 
\end{tikzpicture}
\, + \,
\begin{tikzpicture}[baseline={([yshift=-.6ex]current bounding box.center)}, scale = 0.55, transform shape]
\resolvetop
\node[scale = 0.75] at (0.4, 1) {$\bullet$}; 
\node[scale = 0.75] at (-0.2, -0.1) {$\bullet$};
\end{tikzpicture}
\, + \,
\begin{tikzpicture}[baseline={([yshift=-.6ex]current bounding box.center)}, scale = 0.55, transform shape]
\resolvetop
\node[scale = 0.75] at (0.4, 1) {$\bullet$}; 
\node[scale = 0.75] at (0.4, -0.6) {$\bullet$};
\end{tikzpicture}
\, + \,
\begin{tikzpicture}[baseline={([yshift=-.6ex]current bounding box.center)}, scale = 0.55, transform shape]
\resolvetop
\node[scale = 0.75] at (0.4, 0.1) {$\bullet$}; 
\node[scale = 0.75] at (-0.2, -0.1) {$\bullet$};
\end{tikzpicture}
\, + \,
\begin{tikzpicture}[baseline={([yshift=-.6ex]current bounding box.center)}, scale = 0.55, transform shape]
\resolvetop
\node[scale = 0.75] at (0.4, 0.1) {$\bullet$}; 
\node[scale = 0.75] at (0.4, -0.6) {$\bullet$};
\end{tikzpicture}
\, + \fb \,
\begin{tikzpicture}[baseline={([yshift=-.6ex]current bounding box.center)}, scale = 0.55, transform shape]
\resolvetop
\end{tikzpicture}
\\
& = \, - \,
\begin{tikzpicture}[baseline={([yshift=-.6ex]current bounding box.center)}, scale = 0.55, transform shape]
\resolvetop
\node[scale = 0.75] at (0.4, 1) {$\bullet\,\bullet$}; 
\end{tikzpicture}
\, + \,
\begin{tikzpicture}[baseline={([yshift=-.6ex]current bounding box.center)}, scale = 0.55, transform shape]
\resolvetop
\node[scale = 0.75] at (0.4, 1) {$\bullet$}; 
\node[scale = 0.75] at (-0.2, -0.1) {$\bullet$};
\end{tikzpicture}
\, + \,
\begin{tikzpicture}[baseline={([yshift=-.6ex]current bounding box.center)}, scale = 0.55, transform shape]
\resolvetop
\node[scale = 0.75] at (0.4, 1) {$\bullet$}; 
\node[scale = 0.75] at (0.4, -0.6) {$\bullet$};
\end{tikzpicture}
\, - \,
\begin{tikzpicture}[baseline={([yshift=-.6ex]current bounding box.center)}, scale = 0.55, transform shape]
\resolvetop
\node[scale = 0.75] at (-0.2, -0.1) {$\bullet$}; 
\node[scale = 0.75] at (0.4, -0.6) {$\bullet$};
\end{tikzpicture}
\end{align*}
where for the second equation we used (\ref{eq:ra}) twice, and for the third (\ref{eq:rb}). Using (\ref{eq:cn}) below the singular circle gives
\begin{align*}
\begin{tikzpicture}[baseline={([yshift=-.6ex]current bounding box.center)}, scale = 0.55, transform shape]
\cylinderslice
\end{tikzpicture}
\, & = \,
\begin{tikzpicture}[baseline={([yshift=-.6ex]current bounding box.center)}, scale = 0.55, transform shape]
\resolvebot
\node[scale = 0.75] at (0.4, 1) {$\bullet\,\bullet$}; 
\node[scale = 0.75] at (0.4, -0.4) {$\bullet\, \bullet$};
\end{tikzpicture}
\, + \,
\begin{tikzpicture}[baseline={([yshift=-.6ex]current bounding box.center)}, scale = 0.55, transform shape]
\resolvebot
\node[scale = 0.75] at (0.4, 1) {$\bullet\,\bullet$}; 
\node[scale = 0.75] at (0.4, -0.4) {$\bullet$}; 
\node[scale = 0.75] at (0.4, -1) {$\bullet$};
\end{tikzpicture}
\, - \fa \,
\begin{tikzpicture}[baseline={([yshift=-.6ex]current bounding box.center)}, scale = 0.55, transform shape]
\resolvebot
\node[scale = 0.75] at (0.4, 1) {$\bullet\,\bullet$}; 
\node[scale = 0.75] at (0.4, -0.4) {$\bullet$}; 
\end{tikzpicture}
\, -  \,
\begin{tikzpicture}[baseline={([yshift=-.6ex]current bounding box.center)}, scale = 0.55, transform shape]
\resolvebot
\node[scale = 0.75] at (0.4, 1) {$\bullet$}; 
\node[scale = 0.75] at (-0.2, -0.1) {$\bullet$};
\node[scale = 0.75] at (0.4, -0.4) {$\bullet\, \bullet$};
\end{tikzpicture}
\, -  \,
\begin{tikzpicture}[baseline={([yshift=-.6ex]current bounding box.center)}, scale = 0.55, transform shape]
\resolvebot
\node[scale = 0.75] at (0.4, 1) {$\bullet$}; 
\node[scale = 0.75] at (-0.2, -0.1) {$\bullet$};
\node[scale = 0.75] at (0.4, -0.4) {$\bullet$};
\node[scale = 0.75] at (0.4, -1) {$\bullet$};
\end{tikzpicture}
\, + \fa  \,
\begin{tikzpicture}[baseline={([yshift=-.6ex]current bounding box.center)}, scale = 0.55, transform shape]
\resolvebot
\node[scale = 0.75] at (0.4, 1) {$\bullet$}; 
\node[scale = 0.75] at (-0.2, -0.1) {$\bullet$};
\node[scale = 0.75] at (0.4, -0.4) {$\bullet$};
\end{tikzpicture}
\\
& \, -  \,
\begin{tikzpicture}[baseline={([yshift=-.6ex]current bounding box.center)}, scale = 0.55, transform shape]
\resolvebot
\node[scale = 0.75] at (0.4, 1) {$\bullet$}; 
\node[scale = 0.75] at (0.4, -1) {$\bullet$};
\node[scale = 0.75] at (0.4, -0.4) {$\bullet\, \bullet$};
\end{tikzpicture}
\, -  \,
\begin{tikzpicture}[baseline={([yshift=-.6ex]current bounding box.center)}, scale = 0.55, transform shape]
\resolvebot
\node[scale = 0.75] at (0.4, 1) {$\bullet$}; 
\node[scale = 0.75] at  (0.4, -1) {$\bullet\,\bullet$};
\node[scale = 0.75] at (0.4, -0.4) {$\bullet$};
\end{tikzpicture}
\, + \fa  \,
\begin{tikzpicture}[baseline={([yshift=-.6ex]current bounding box.center)}, scale = 0.55, transform shape]
\resolvebot
\node[scale = 0.75] at (0.4, 1) {$\bullet$}; 
\node[scale = 0.75] at  (0.4, -1) {$\bullet$};
\node[scale = 0.75] at (0.4, -0.4) {$\bullet$};
\end{tikzpicture}
\, +  \,
\begin{tikzpicture}[baseline={([yshift=-.6ex]current bounding box.center)}, scale = 0.55, transform shape]
\resolvebot
\node[scale = 0.75] at (-0.2, -0.1) {$\bullet$}; 
\node[scale = 0.75] at (0.4, -1) {$\bullet$};
\node[scale = 0.75] at (0.4, -0.4) {$\bullet\, \bullet$};
\end{tikzpicture}
\, +  \,
\begin{tikzpicture}[baseline={([yshift=-.6ex]current bounding box.center)}, scale = 0.55, transform shape]
\resolvebot
\node[scale = 0.75] at (-0.2, -0.1) {$\bullet$}; 
\node[scale = 0.75] at  (0.4, -1) {$\bullet\,\bullet$};
\node[scale = 0.75] at (0.4, -0.4) {$\bullet$};
\end{tikzpicture}
\, - \fa  \,
\begin{tikzpicture}[baseline={([yshift=-.6ex]current bounding box.center)}, scale = 0.55, transform shape]
\resolvebot
\node[scale = 0.75] at (-0.2, -0.1) {$\bullet$}; 
\node[scale = 0.75] at  (0.4, -1) {$\bullet$};
\node[scale = 0.75] at (0.4, -0.4) {$\bullet$};
\end{tikzpicture}
\end{align*}
where we already use Lemma \ref{lm:burst_bub} on the bubbles having no dots. Using the other relations of Lemma \ref{lm:burst_bub} gives
\begin{align*}
\begin{tikzpicture}[baseline={([yshift=-.6ex]current bounding box.center)}, scale = 0.55, transform shape]
\cylinderslice
\end{tikzpicture}
\, & = \, - \,
\begin{tikzpicture}[baseline={([yshift=-.6ex]current bounding box.center)}, scale = 0.55, transform shape]
\resolveslice
\node[scale = 0.75] at (0.4, 0.8) {$\bullet\,\bullet$};
\node[scale = 0.75] at (0.4, 0) {$\bullet$};
\end{tikzpicture}
\, + \,
\begin{tikzpicture}[baseline={([yshift=-.6ex]current bounding box.center)}, scale = 0.55, transform shape]
\resolveslice
\node[scale = 0.75] at (0.4, 0.8) {$\bullet\,\bullet$};
\node[scale = 0.75] at (0.4, -0.8) {$\bullet$};
\end{tikzpicture}
\, + \,
\begin{tikzpicture}[baseline={([yshift=-.6ex]current bounding box.center)}, scale = 0.55, transform shape]
\resolveslice
\node[scale = 0.75] at (0.4, 0) {$\bullet\,\bullet$};
\node[scale = 0.75] at (0.4, 0.8) {$\bullet$};
\end{tikzpicture}
\, - \,
\begin{tikzpicture}[baseline={([yshift=-.6ex]current bounding box.center)}, scale = 0.55, transform shape]
\resolveslice
\node[scale = 0.75] at (0.4, 0) {$\bullet$};
\node[scale = 0.75] at (0.4, 0.8) {$\bullet$};
\node[scale = 0.75] at (0.4, -0.8) {$\bullet$};
\end{tikzpicture}
\\ 
& \, + \,
\begin{tikzpicture}[baseline={([yshift=-.6ex]current bounding box.center)}, scale = 0.55, transform shape]
\resolveslice
\node[scale = 0.75] at (0.4, 0) {$\bullet$};
\node[scale = 0.75] at (0.4, 0.8) {$\bullet$};
\node[scale = 0.75] at (0.4, -0.8) {$\bullet$};
\end{tikzpicture}
\, - \,
\begin{tikzpicture}[baseline={([yshift=-.6ex]current bounding box.center)}, scale = 0.55, transform shape]
\resolveslice
\node[scale = 0.75] at (0.4, 0.8) {$\bullet$};
\node[scale = 0.75] at (0.4, -0.8) {$\bullet\,\bullet$};
\end{tikzpicture}
\, - \,
\begin{tikzpicture}[baseline={([yshift=-.6ex]current bounding box.center)}, scale = 0.55, transform shape]
\resolveslice
\node[scale = 0.75] at (0.4, -0.8) {$\bullet$};
\node[scale = 0.75] at (0.4, 0) {$\bullet\,\bullet$};
\end{tikzpicture}
\, + \,
\begin{tikzpicture}[baseline={([yshift=-.6ex]current bounding box.center)}, scale = 0.55, transform shape]
\resolveslice
\node[scale = 0.75] at (0.4, 0) {$\bullet$};
\node[scale = 0.75] at (0.4, -0.8) {$\bullet\,\bullet$};
\end{tikzpicture}
\end{align*}
and this is the statement we need.
\end{proof}

\section{Computations}
The author has developed a computer programme, \verb+ knotjob+, which can perform computations of the $s$-invariants and various spectral sequences, and which is available from his website. Memory consumption and speed are noticeably worse when compared to ordinary Khovanov homology. While the $(5,6)$-torus knot works fairly well, the $(6,7)$-torus knot represents a challenge for a 32GB desktop PC.

\subsection{$S$-invariants}\label{subsec:sinv}

We calculated $s^p_{\mathfrak{sl}_3}(K)$ for $p = 0, 2, 3, 5, 7$ for all prime knots with up to $14$ crossings, and found $372$ knots for which $s^p_{\mathfrak{sl}_3}(K) \not= s^p(K)$ for at least one $p\in \{0,2,3,5,7\}$, and where $s^p(K)$ denotes the $s$-invariant from Khovanov homology in characteristic $p$. In fact, for all of these knots we have $s^p_{\mathfrak{sl}_3}(K) = s^{p'}_{\mathfrak{sl}_3}(K)$ for $p,p'\in \{0,2,5,7\}$, and $s^0_{\mathfrak{sl}_3}(K) \not= s^3_{\mathfrak{sl}_3}(K)$. Furthermore, for all of these knots we have $s^3_{\mathfrak{sl}_3}(K)\in 2\Z$, and $s^p_{\mathfrak{sl}_3}(K)\notin 2\Z$ for all $p\in \{0,2,5,7\}$.

There are two knots with up to $14$ crossings for which $s^0_{\mathfrak{sl}_3}(K) \not= s^2_{\mathfrak{sl}_3}(K)$, but these are the same two knots for which $s^0(K)\not= s^2(K)$. One of these knots is the Whitehead double $D_+(T(2,3), 3)$. Another interesting Whitehead double in this vein is $D_+(T(3,4),8)$, the first observed example \cite{LewarkZib} where $s^p(K) = 0$ for all $p\not= 3$, and $2 = s^3(K)$. We get a similar behaviour for the $\mathfrak{sl}_3$-s-invariant.
\[
s^p_{\mathfrak{sl}_3}(D_+(T(3,4),8)) = \left\{
\begin{array}{cc}
0 & p\in \{0, 2, 5\}, \\
2 & p = 3.
\end{array}
\right.
\]

For prime knots with $15$ and $16$ crossings we only computed $s^p_{\mathfrak{sl}_3}(K)$ for $p\in \{2,3,7\}$. We found $17,901$ knots ($2,490$ with $15$ crossings, $15,411$ with $16$ crossings) where at least one of $s^p_{\mathfrak{sl}_3}(K)$ differs from $s^0(K)$. Table \ref{tb:slice_improv} lists the number of knots with $s^p_{\mathfrak{sl}_3}(K) > s^p(K)$ for various values of $p$.

\begin{table}[ht]
\begin{tabular}{|c|c|c|c|c|}
\hline
\backslashbox{\!$c$\!}{\!$p$\!} & 2 & 3 & 7 & Total \\
 \hline
 12 & 1 & 1 & 1 & 1\\
 13 & 7 & 4 & 7 & 7 \\
 14 & 55 & 45 & 55 & 55 \\
 15 & 57 & 397 & 448 & 459 \\
 16 & 3327 & 2689 & 3306 & 3342 \\
 \hline 
 \end{tabular}
 \vspace{0.2cm}
 \caption{\label{tb:slice_improv}The number of knots with $c$ crossings where $s^p_{\mathfrak{sl}_3}(K) > s^p(K)$.} 
 \end{table}

For prime knots with $17$ crossings we restricted ourselves to those hyperbolic knots whose Alexander polynomial satisfies the Fox-Milnor condition, the signature and $s^0(K)$ equal $0$. Of the remaining $120,890$ knots, $239$ have a non-zero $s^p_{\mathfrak{sl}_3}(K)$ with $p=2,3$ or $7$. Of these, $238$ are non-zero for $p=2$ and $7$, with $224$ are non-zero for $p = 3$.

For $18$ crossing prime knots we restricted ourselves even further, additionally requiring that the complete invariant of \cite{dunfield2024} has to vanish as well. Among the first $15$ million non-alternating hyperbolic knots (these knots, taken from \cite{MR4117738}, are ordered by increasing hyperbolic volume) we found $283,158$ such knots, and for $534$ of them  we computed $s^p_{\mathfrak{sl}_3}(K) \not=0$ for at least one $p\in\{ 2, 3, 5\}$. Again, for $p=3$ we have the most knots with vanishing $s$-invariant, only $466$ knots have a non-zero invariant. For $p=2$, resp.\ $p=5$, the invariant is non-zero for $531$, resp.\ $532$, knots. Finally, we calculated $s^0_{\mathfrak{sl}_3}$ for these $534$ knots and found that it agrees with $s^5_{\mathfrak{sl}_3}$ there.

Among these knots, the knot $K=18^{nh}_{1478167}$ stands out in that it has the property that $s_{\mathfrak{sl}_3}^2(K) = 1$, while for all other $p\leq 19$ we have $s^p_{\mathfrak{sl}_3}(K) = 0$, and all the slice obstructions from the spectral sequences below also vanish. Together with $18^{nh}_{272257}$, where $s^p_{\mathfrak{sl}_3}$ vanishes for $p=2$, but not for $p=0, 3$, and $18^{nh}_{5561369}$, where $s^p_{\mathfrak{sl}_3}$ vanishes for $p = 0, 2$, but not for $p = 3$, we see that $s^p_{\mathfrak{sl}_3}$ are linearly independent homomorphisms for $p=0, 2, 3$, even when restricted to the kernel of all the $s$-invariants from Khovanov homology.

In almost all examples of prime knots $K$ with $s^p_{\mathfrak{sl}_3}(K)\not= s^p(K)$ and $p\not= 3$ we calculated, we have $s^p_{\mathfrak{sl}_3}(K)\notin 2\Z$. By Theorem \ref{thm:first_main} this cannot happen for $p=3$. It would be interesting to know whether $s^p_{\mathfrak{sl}_3}(K)$ being odd has other consequences for $K$. One observation we made is that for such $K$ the numbers $s'_{X^3-X}(K), s''_{X^3-X}(K)$ and $s'''_{X^3-X}(K)$ tend to not all agree. Indeed, this happens for all knots with up to $14$ crossings and $s^0_{\mathfrak{sl}_3}(K)$ odd.

\subsection{Spectral Sequences in characteristic $0$}

Let $f(X) = X^3 - X - w$ with $w\in \C$. Lewark--Lobb \cite{MR3458146} initiated the study of $\KR$-equivalence and showed that there are knots where the polynomial with $w=0$ is not $\KR$-equivalent to the one with $w=1$. By the results from Section \ref{sec:kr_equiv} we can assume that real-part and imaginary-part of $w$ are non-negative.

A necessary and sufficient criterion for two polynomials to be $\KR$-equivalent for a link is that the associated spectral sequences are identical \cite[Prp.3.9]{MR3458146}. We use the notation $_wE^{i,j}_k$ for the spectral sequence  to indicate the dependence on $w\in \C$, noting that $_wE^{i,j}_1={_wE}^{i,j}_2=\Hsl^{i,j}(L;\C)$, and $_wE^{i,j}_3 = {_0E}^{i,j}_3 = {_0E}^{i,j}_4$ for all $w\in \C$. We are therefore particularly interested in bi-gradings $(i,j)$ and $w\in \C$ with $_wE^{i,j}_3 \not={_wE}^{i,j}_4$.

The first such examples were observed in \cite[Tb.3]{MR3458146}, and include $10_{125}\# 10_{125}$ for $w=1$. Indeed, a good recipe for getting such knots is to look at $K\# K$, where $K$ is a knot with $s^0_{\mathfrak{sl}_3}(K)\notin 2\Z$. However, we do not know if this holds in general.

\verb+knotjob+ can calculate $_wE^{i,j}_k$ for various values of $w$, including $w=n, \sqrt{n}$ for small non-negative integers, $w=\sqrt{-1}$, and for transcendental $w$. However, we have not observed much variation when varying $w$, due to the following result and the fact that our calculations are limited to fairly small knots due to memory restrictions.

\begin{proposition}\label{prp:kr_crit}
Let $L$ be a link. Then $_wE^{i,j}_4 = {_{w'}E}^{i,j}_4$ for all $w,w'\in \C-\{0\}$. Additionally, assume that the spectral sequence corresponding to the polynomial $X^3-X^2$ collapses at the $E_3$-page, and the spectral sequence corresponding to the polynomial $X^3-X$ collapses at the $E_5$-page. Then all $w,w' \in \C-\{\pm \frac{2\sqrt{3}}{9}\}$ give rise to $\KR$-equivalent polynomials over $L$.
\end{proposition}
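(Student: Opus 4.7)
The strategy is to track how each spectral-sequence differential on $_wE^{i,j}_r$ depends on the parameter $w$ in the polynomial $f_w(X)=X^3-X-w$ (equivalently, in the graded setting, $\tilde f_w(X)=X^3-h^2X-wh^3$), and then combine this with the scaling and shift isomorphisms of Propositions \ref{prp:shift_equiv} and \ref{prp:scale_equiv} and the dimension count provided by Theorem \ref{thm:nice_homology}. The key observation is that the term $-wh^3$ enters the Frobenius comultiplication linearly in $w$ through relation (\ref{eq:cn}), so every contribution it makes to the differential on the filtered complex, and hence to every spectral-sequence page, carries $w$ as a prefactor.

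For the first assertion, the paper already records $_wE^{i,j}_3 = {_0E}^{i,j}_3$ for every $w$, so it remains to analyse $d_3$ on this common $E_3$-page. Under the linearity observation above, this differential takes the form $d_3 = w\cdot\widetilde d_3$ for some universal (i.e.\ $w$-independent) linear map $\widetilde d_3$; the filtration shift associated to the $h^3$-weight of the correction identifies the correct spectral-sequence page. Since scaling a linear map by a non-zero scalar preserves its kernel and image, the quotient $\ker d_3/\mathrm{im}\,d_3$ is the same for every $w\in\C\setminus\{0\}$, giving $_wE^{i,j}_4 = {_{w'}E}^{i,j}_4$ on that range.

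For the second assertion, the exceptional values $w=\pm\tfrac{2\sqrt{3}}{9}$ are precisely the roots of the discriminant of $f_w$, so for every other $w$ the polynomial is separable and Theorem \ref{thm:nice_homology} yields $\dim_{\C}\Hsl^\ast(L;\Fr_{f_w})=3^n$, where $n$ is the number of components of $L$. The hypothesis that the $X^3-h^2X$ spectral sequence collapses at $E_5$ gives $\sum_{i,j}\dim{_0E}^{i,j}_5=3^n$. Extending the linearity argument of Part 1, each subsequent differential $d_5,d_7,\ldots$ on $_wE$ factors as a polynomial in $w$ times a universal map. The hypothesis that the $X^3-hX^2$ spectral sequence collapses at $E_3$ is used to rule out the only possible cross-term between the (absent) $X^2$-deformation and the $X$-deformation that could otherwise appear in $d_5$ on $_wE$, so $d_5$ on $_wE$ agrees up to a non-zero scalar with $d_5$ on $_0E$. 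After this page the total dimension has already reached $3^n$, and since the spectral sequence converges to a space of the same dimension there is no room for any further non-trivial differential. Hence $_wE^{i,j}_\infty = {_0E}^{i,j}_\infty$ for every admissible $w$, and the criterion of \cite[Prp.3.9]{MR3458146} yields $\KR$-equivalence of all the $f_w$ on this range.

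The main obstacle is the foam-level justification of the linearity-in-$w$ identity for $d_3$ and its analogues for $d_5,d_7$: one needs to trace how the relations (\ref{eq:cn}) and the theta-foam evaluation (\ref{eq:theta}) depend on the coefficients $b$ and $c$, and to verify that cross-terms between $a$ and the other coefficients are controlled exactly by the $X^3-hX^2$ collapse hypothesis. Once this bookkeeping is in place, the dimension count together with the two collapse hypotheses closes the argument.
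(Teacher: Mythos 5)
Your argument for the first assertion is in the right spirit and matches the paper's approach: pass to the two-parameter graded form $\tilde f(X)=X^3-h^2X-h^3t$, note that with $a=0$ the only available $q$-degree-$6$ structure constant is $c=h^3t$, and conclude that the reduced boundary component $\partial^6$ is a rational multiple of $h^3t$; after specializing $h\mapsto 1,\ t\mapsto w$ one gets $d_3=w\cdot\widetilde d_3$ and nonzero scaling does not change the $E_4$-page. You should, however, make explicit why $d_3$ has \emph{no} $w$-independent summand — this is the whole content of the ``linearity observation'' and your phrasing leaves it as an assertion.

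The second assertion has a genuine gap. You claim that ``each subsequent differential $d_5,d_7,\ldots$ on $_wE$ factors as a polynomial in $w$ times a universal map.'' This is false a priori. The decisive differential is the one dropping $q$-degree by $8$ (the paper's $d_4$), and its matrix entries have the form $a_i+b_iw^2$ with $a_i,b_i\in\Q$ where different entries can carry independent pairs $(a_i,b_i)$. Without a further constraint there is no reason for $_wd_4$ to be a scalar multiple of $_0d_4$, which is what your dimension-count step silently requires. The missing step, which the paper supplies, is this: $X^3-X^2$ is $\KR$-equivalent (via Propositions \ref{prp:shift_equiv} and \ref{prp:scale_equiv}) to $X^3-X\mp\frac{2\sqrt{3}}{9}$, so its $E_3$-collapse forces $a_i+\frac{4}{27}b_i=0$ for every $i$; hence $a_i+b_iw^2=a_i\bigl(1-\frac{27}{4}w^2\bigr)$ and $_wd_4=\bigl(1-\frac{27}{4}w^2\bigr)\,{_0d_4}$, a single scalar multiple. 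Only then does the $E_5$-collapse of $X^3-X$, together with the minimal total rank $3^n$, close the argument (with a further case distinction between $_0d_4=0$ and $_0d_4\neq 0$, which you do not make). Your ``cross-term between the absent $X^2$-deformation and the $X$-deformation'' explanation does not produce this identity: once $a=0$ has been normalized away there is no $X^2$-deformation left, and the proportionality is a root-of-a-quadratic constraint coming from the evaluation at $w=\pm\frac{2\sqrt{3}}{9}$, not from a vanishing cross-term. Finally, note that the relevant page is $d_4$ (degree drop $8$), not $d_5$ or $d_7$; your indexing skips the differential where all the work happens.
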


\begin{proof}
Consider the polynomial ring in two variables $R=\Q[h, t]$, which is graded by $|h|_q = 2$ and $|t|_q=0=|1|_q$. Consider the polynomial $f(X) = X^3-h^2X-h^3t\in R[X]$ and let $D$ be a diagram for the link $L$. Then $\Csl(D;\Fr_f)$ is a free graded $R$-cochain complex which has a basis of homogeneous elements. The units of $R$ agree with the units of the subring $\Q$, and we can do Gaussian elimination on $\Csl(D;\Fr_f)$ until we obtain a free graded $R$-cochain complex $C$ with a basis $c_1,\ldots,c_l$ of homogeneous elements such that
\[
C^{i,j}\otimes_R\Q \cong \Hsl^{i,j}(L;\Q),
\]
for the base change $\eta\colon R \to \Q$ sending $h, t$ to $0$. This means for a basis element $c$ that
\[
\partial c = \partial^4 c+\partial^6 c +\cdots,
\]
where $\partial^\ell c$ is a linear combination of basis elements $c_{i_1},\ldots, c_{i_\ell}$ with $|c_{i_j}|_q = |c|_q - \ell$. Furthermore, for $\ell = 4, 8$ we have
\begin{equation}\label{eq:boundary_4}
\partial^\ell c = h^{\ell/2} \left(\sum a_{i_\ell}c_{i_\ell}\right) \hspace{0.4cm}\mbox{with }a_{i_\ell}\in \Q,
\end{equation}
and
\begin{equation}\label{eq:boundary_6}
\partial^6 c = h^3 t \left(\sum a_{i_6}c_{i_6}\right) \hspace{0.4cm}\mbox{with }a_{i_6}\in \Q.
\end{equation}
Now consider the base change $\chi\colon R \to \Q[t]$ sending $h$ to $1$. The basis elements of $\tilde{C} = C\otimes_R \Q[t]$ still have a well-defined $q$-degree, although the complex is now only filtered. But we still have a spectral sequence $(E_k)$ over $\Q[t]$, with $E_1^{i,j} = E_2^{i,j} = \tilde{C}^{i,j}$. Moreover, $E_3$ is obtained from $\tilde{C}$ using $\tilde{\partial}^4$ and Gaussian elimination, and by (\ref{eq:boundary_4}) we get that $E^{i,j}_3$ is a free $\Q[t]$-module in every bi-degree. The boundary map $d_3\colon E^{i,j}_3\to E^{i+1, j-6}$ is induced by $\partial^6$ and summands coming from the zig-zags of Gaussian elimination. But these zig-zags involve either $\partial^6|\circ (\partial^4|)^{-1}\circ \partial^4|$ or $\partial^4|\circ (\partial^4|)^{-1}\circ \partial^6|$. Either way, because of (\ref{eq:boundary_6}) we get that $d_3$ is of the form
\[
d_3(c) = t\left(\sum b_{i_6} c_{i_6}\right) \hspace{0.4cm}\mbox{with }b_{i_6}\in \Q.
\]
Now consider the base change $\sigma\colon \Q[t] \to \C$ sending $t\to w\in \C-\{0\}$. Then $_wE_3^{i,j} = E^{i,j}_3\otimes_{\Q[t]}\C$ and $_wd_3 = d_3\otimes\id$. In particular, the $_wd_3$ for varying $w$ only differ by an invertible factor. Hence the $E_4$ page does not depend on $w\not= 0$.

If we additionally assume that the spectral sequence for the polynomial $X^3-X^2$ collapses at the $E_3$-page, we get that $d_3$ above needs to be the $0$-homomorphism. This is because $X^3-X^2$ is $\KR$-equivalent to $X^3-X\pm\frac{2\sqrt{3}}{9}$, and therefore covered by the argument above.

Recall how $E_3$ was obtained from $\tilde{C}$ using Gaussian elimination. Since we now have $d_3=0$, we get $E_4 = E_3$, and $d_4\colon E_4^{i,j}\to E_4^{i+1, j-8}$ is induced by $\partial^8$ and zig-zags from the Gaussian elimination. These zig-zags can involve $\partial^8|\circ (\partial^4|)^{-1}\circ \partial^4|$ or $\partial^4|\circ (\partial^4|)^{-1}\circ \partial^8|$, but also $\partial^6|\circ (\partial^4|)^{-1}\circ \partial^6|$. By (\ref{eq:boundary_4}) and (\ref{eq:boundary_6}) we get that
\[
d_4(c) = \sum (a_{i_8}+t^2b_{i_8}) c_{i_8}\hspace{0.4cm}\mbox{with }a_{i_8},b_{i_8}\in \Q.
\]
Again, we can do the base change $\sigma\colon \Q[t]\to \C$ sending $t$ to $w$, this time even allowing $w=0$. As before, for $w=\pm\frac{2\sqrt{3}}{9}$ we need to get $_wd_4 = 0$. This means that $\pm w$ is a root for all the quadratic polynomials $a_{i_8}+b_{i_8}t^2$ appearing in $d_4$. 

If the spectral sequence for $X^2-X$ collapses at the $E_3$-page, these quadratic polynomial also need $w=0$ as a root, making all of $d_4=0$. Since the total rank of the $E_3$ page is minimal, all other spectral sequences $_wE_3$ also collapse. 

If the spectral sequence for $X^2-X$ collapses at the $E_5$-page, but not sooner, we need $_0d_4\not=0$, and so at least one of these polynomials $a_{i_8}+b_{i_8}t^2$ is non-zero. But then no other $w\not= \pm \frac{2\sqrt{3}}{9}$ can be a root of any of these polynomials, and so $_wd_4$ is also just $_0d_4$ up to an invertible factor. It follows that $_wE^{i,j}_5 = {_0E}^{i,j}_5$ for all $w\not=\pm \frac{2\sqrt{3}}{9}$. Since $_0E_5$ has minimal rank, so do all the other $_wE_5$.
\end{proof}

Computations show that the spectral sequence for $X^3-X^2$ collapses at the $E_3$-page, and for $X^3-X$ at the $E_5$-page for all prime knots with up to $14$ crossings. 

\begin{corollary}
Let $K$ be a prime knot with less than $15$ crossings. Then $X^3-X-w$ is $\KR$-equivalent to $X^3-X$ for all $w\in \C-\{\pm\frac{2\sqrt{3}}{9}\}$.\hfill \qedsymbol
\end{corollary}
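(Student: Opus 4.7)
The plan is to apply Proposition \ref{prp:kr_crit} directly, after verifying its two hypotheses hold for every prime knot $K$ with fewer than $15$ crossings. Recall the proposition requires (a) the spectral sequence associated to the polynomial $X^3 - X^2$ collapses at its $E_3$-page, and (b) the spectral sequence associated to $X^3 - X$ collapses at its $E_5$-page. The conclusion of the proposition is then that all $w \in \C - \{\pm\frac{2\sqrt{3}}{9}\}$ give rise to $\KR$-equivalent polynomials of the form $X^3 - X - w$ over the link in question.

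First I would invoke the computational statement made immediately before the corollary: the author has verified via \texttt{knotjob} that for every prime knot with at most $14$ crossings, both collapse properties (a) and (b) hold. Under the standing convention that ``less than $15$ crossings'' means at most $14$ crossings, this is precisely what is needed to activate Proposition \ref{prp:kr_crit}.

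Next, I would apply the proposition to the given knot $K$: since both collapse hypotheses are satisfied, the polynomials $X^3 - X - w$ and $X^3 - X - w'$ are $\KR$-equivalent over $K$ for every pair $w, w' \in \C - \{\pm\frac{2\sqrt{3}}{9}\}$. Specializing to $w' = 0$ yields $\KR$-equivalence of $X^3 - X - w$ with $X^3 - X$ for every such $w$, which is the claim.

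The main obstacle, such as it is, is entirely computational rather than theoretical: one must trust (and have performed) the large-scale verification that the two spectral sequences collapse at the claimed pages across the whole $14$-crossing prime knot census. Once that computational input is granted, the corollary is an immediate consequence of Proposition \ref{prp:kr_crit}, so no additional argument is required.
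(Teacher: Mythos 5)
Your proof is correct and is exactly the argument the paper intends: the sentence preceding the corollary records the computational verification that the two collapse hypotheses of Proposition~\ref{prp:kr_crit} hold for all prime knots with at most $14$ crossings, and the corollary is then the case $w'=0$ of that proposition's conclusion. The paper leaves the proof implicit (marked only with a \qedsymbol), so there is nothing more to compare.
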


If the spectral sequence for $X^3-X^2$ satisfies $E_4\not= E_3$, we get that $X^3-X$ is not equivalent to $X^3-X-w$ for any $w$ by Proposition \ref{prp:kr_crit}. In particular, this gives a strategy for finding knots with different spectral sequences.

The smallest knot example with this property is the $(4,5)$-torus knot, and we also observed this for other torus knots with $4$ and $5$ strands, and also for the $(6,7)$-torus knot. For torus links we observed this for $(4,8)$ and $(5,5)$, but not for $(4,4)$ or $(4,6)$. 

Table \ref{tb:t45} shows the $E_3$ page for the $(4,5)$-torus knot for the spectral sequence corresponding to $X^3-X-w$. The two arrows indicate the boundary map $d_3\colon E_3\to E_3$ for $w\not=0$. If $w\not=\pm \frac{2\sqrt{3}}{9}$, the $d_4$ map will kill all the remaining generators in non-zero homological degree, as otherwise any survivors remain for the $E_\infty$-page. In particular, for $w,w'\in \C-\{0,\pm \frac{2\sqrt{3}}{9}\}$ we have $X^3-X-w$ and $X^3-X-w'$ are $\KR$-equivalent over the $(4,5)$-torus knot.

\begin{table}[ht]
\begin{tabular}{|c||c|c|c|c|c|c|c|c|c|c|c|c|c|}
\hline
\backslashbox{\!$q$\!}{\!$h$\!} & $-12$ & $-11$ & $-10$ & $-9$ & $-8$ & $-7$ & $-6$ & $-5$ & $-4$ & $-3$ & $-2$ & $-1$ & $0$ \\
\hline
\hline
$52$  & $1$ &   &   &   &   &   &   &   &   &   &   &   &   \\
\hline
$50$  &   &   &   &   &   &   &   &   &   &   &   &   &   \\
\hline
$48$  &   &   &   &   &   &   &   &   &   &   &   &   &   \\
\hline
$46$  &   & $1$ &   &   &   &   &   &   &   &   &   &   &   \\
\hline
$44$  &   & $1$ &   &   &   &   &   &   &   &   &   &   &   \\
\hline
$42$  &   &   &   & $1$ &   &   &   &   &   &   &   &   &   \\
\hline
$40$  &   &   &   &   &   & $1$ &   &   &   &   &   &   &   \\
\hline
$38$  &   &   & $1$ &   &   & $1$ &   & $1$ &   &   &   &   &   \\
\hline
$36$  &   &   &   &   &   &   &   & $1$ &   &   &   &   &   \\
\hline
$34$  &   &   &   &   & $1$ &   &   &   &   & $1$ &   &   &   \\
\hline
$32$  &   &   &   &   &   &   & $1$ &   &   &   &   &   &   \\
\hline
$30$  &   &   &   &   &   &   & $1$ &   & $1$ &   &   &   &   \\
\hline
$28$  &   &   &   &   &   &   &   &   & $1$ &   &   &   &   \\
\hline
$26$  &   &   &   &   &   &   &   &   &   &   & $1$ &   & $1$ \\
\hline
$24$  &   &   &   &   &   &   &   &   &   &   &   &   & $1$ \\
\hline
$22$  &   &   &   &   &   &   &   &   &   &   &   &   & $1$ \\
\hline
\end{tabular}
\\[0.2cm]
\begin{tikzpicture}[overlay]
\draw[->] (-4.3,7) -- (-3.6, 5.7);
\draw[->] (-3.3, 5.25) -- (-2.65, 4);
\end{tikzpicture}
\caption{\label{tb:t45}The $E_3$-page of the $(4,5)$-torus knot for the polynomial $X^3-X-w$. The boundary $d_3$ for $w\not=0$ is indicated by arrows. Notice that the spectral sequence has to collapse at the $E_5$-page, so that all polynomials with $w\not=0, \pm\frac{2\sqrt{3}}{9}$ are $\KR$-equivalent.}
\end{table}

Most of the examples that we found where $X^3-X-w$ for $w\not=0$ is not $\KR$-equivalent to $X^3-X$ have the property that the spectral sequence with respect to $X^3-X^2$ does not collapse at the $E_3$-page, but at the $E_4$-page. The spectral sequence with respect to $X^3-X$ still collapses at the $E_5$-page. For such knots we do not expect different $\KR$-equivalence classes for different $w\in \C-\{0,\pm\frac{2\sqrt{3}}{9}\}$.

Interestingly, for the $(6,7)$-torus knot the spectral sequence with respect to $X^3-X^2$ collapses at the $E_5$-page rather than the $E_4$-page. The spectral sequence for $X^3-X$ also collapses at the $E_5$-page. 

\begin{question}
Do there exist knots or links such that $X^3-X-w$ and $X^3-X-w'$ are not $\KR$-equivalent for $w,w'\in \C-\{0, \pm\frac{2\sqrt{3}}{9}\}$?
\end{question} 

We expect the answer to this question to be yes, but we may require that the spectral sequence for $X^3-X$ collapses only after the $E_5$-page. There exist $4$-component links with $12$ crossings and $3$-component links with $13$ crossings where the spectral sequence collapses at the $E_7$-page, but since their spectral sequence for $X^3-X^2$ collapses at $E_3$, we do not expect them to be candidates. The $(6,6)$-torus link has the property that the spectral sequence for $X^3-X$ collapses at the $E_7$-page, and the spectral sequence for $X^3-X^2$ collapses at $E_4$.

\subsection{Spectral Sequences in characteristic $3$}

Let $f(X) = X^3 - X^2 - w$ with $w  \in \{\pm 1\} \subset \F_3$. Unlike in characteristic $0$, it is not clear whether $1$ and $-1$ give rise to the same results, although we have not found a knot where the spectral sequences are different for these two values. Nevertheless, the spectral sequence is very different from the case $X^3-X$. We usually have $E_2\not= E_1$, and we found examples where it collapses at $E_6$. The three numbers $s'_f(K), s''_f(K), s'''_f(K)$ can be different, although this happens much less frequently than in characteristic $0$.

As a slice obstruction, the numbers $s'_f(K), s''_f(K), s'''_f(K)$ in characteristic $3$ appear more efficient than $s^3_{\mathfrak{sl}_3}$. Recall from our computations in Section \ref{subsec:sinv} that the $s$-invariant for $p = 3$ was less efficient than other characteristics. In nearly all cases of $17$ and $18$ crossing knots with one $s^p_{\mathfrak{sl}_3}(K)\not=0$ and $s^3_{\mathfrak{sl}_3}(K)=0$ we get a non-zero entry from the polynomial $X^3-X^2-1$.

\bibliography{KnotHomology}

\providecommand{\bysame}{\leavevmode\hbox to3em{\hrulefill}\thinspace}
\providecommand{\MR}{\relax\ifhmode\unskip\space\fi MR }
\providecommand{\MRhref}[2]{%
  \href{http://www.ams.org/mathscinet-getitem?mr=#1}{#2}
}
\providecommand{\href}[2]{#2}
\begin{thebibliography}{DLS24}

\bibitem[BN07]{MR2320156}
Dror Bar-Natan, \emph{Fast {K}hovanov homology computations}, J. Knot Theory
  Ramifications \textbf{16} (2007), no.~3, 243--255. \MR{2320156}

\bibitem[Bur20]{MR4117738}
Benjamin~A. Burton, \emph{The next 350 million knots}, 36th {I}nternational
  {S}ymposium on {C}omputational {G}eometry, LIPIcs. Leibniz Int. Proc.
  Inform., vol. 164, Schloss Dagstuhl. Leibniz-Zent. Inform., Wadern, 2020,
  pp.~Art. No. 25, 17. \MR{4117738}

\bibitem[DLS24]{dunfield2024}
Nathan~M. Dunfield, Robert Lipshitz, and Dirk Sch\"utz, \emph{Local equivalence
  and refinements of {R}asmussen's s-invariant}, ArXiv e-print 2312.09114
  (2024).

\bibitem[Gor08]{MR2711478}
Bojan Gornik, \emph{Duality of {K}hovanov-{R}ozansky link homology}, ProQuest
  LLC, Ann Arbor, MI, 2008, Thesis (Ph.D.)--Princeton University. \MR{2711478}

\bibitem[Kho00]{MR1740682}
Mikhail Khovanov, \emph{A categorification of the {J}ones polynomial}, Duke
  Math. J. \textbf{101} (2000), no.~3, 359--426. \MR{1740682}

\bibitem[Kho04]{MR2100691}
\bysame, \emph{sl(3) link homology}, Algebr. Geom. Topol. \textbf{4} (2004),
  1045--1081. \MR{2100691}

\bibitem[Kho06]{MR2232858}
\bysame, \emph{Link homology and {F}robenius extensions}, Fund. Math.
  \textbf{190} (2006), 179--190. \MR{2232858}

\bibitem[KR08]{MR2391017}
Mikhail Khovanov and Lev Rozansky, \emph{Matrix factorizations and link
  homology}, Fund. Math. \textbf{199} (2008), no.~1, 1--91. \MR{2391017}

\bibitem[Lan02]{MR1878556}
Serge Lang, \emph{Algebra}, third ed., Graduate Texts in Mathematics, vol. 211,
  Springer-Verlag, New York, 2002. \MR{1878556}

\bibitem[Lew13]{MR3248745}
Lukas Lewark, \emph{{$\mathfrak{sl}_3$}-foam homology calculations}, Algebr.
  Geom. Topol. \textbf{13} (2013), no.~6, 3661--3686. \MR{3248745}

\bibitem[Lew24]{lewark2024new}
Lukas Lewark, \emph{A new concordance homomorphism from {K}hovanov homology},
  ArXiv e-print 2401.08480 (2024).

\bibitem[Liv04]{MR2057779}
Charles Livingston, \emph{Computations of the {O}zsv\'ath-{S}zab\'o{} knot
  concordance invariant}, Geom. Topol. \textbf{8} (2004), 735--742.
  \MR{2057779}

\bibitem[LL16]{MR3458146}
Lukas Lewark and Andrew Lobb, \emph{New quantum obstructions to sliceness},
  Proc. Lond. Math. Soc. (3) \textbf{112} (2016), no.~1, 81--114. \MR{3458146}

\bibitem[Lob09]{MR2554935}
Andrew Lobb, \emph{A slice genus lower bound from {${\rm sl}(n)$}
  {K}hovanov-{R}ozansky homology}, Adv. Math. \textbf{222} (2009), no.~4,
  1220--1276. \MR{2554935}

\bibitem[Lob12]{MR2916277}
\bysame, \emph{A note on {G}ornik's perturbation of {K}hovanov-{R}ozansky
  homology}, Algebr. Geom. Topol. \textbf{12} (2012), no.~1, 293--305.
  \MR{2916277}

\bibitem[LS14]{MR3189434}
Robert Lipshitz and Sucharit Sarkar, \emph{A refinement of {R}asmussen's
  {$S$}-invariant}, Duke Math. J. \textbf{163} (2014), no.~5, 923--952.
  \MR{3189434}

\bibitem[LZ21]{LewarkZib}
Lukas Lewark and Claudius Zibrowius, \emph{Rasmussen invariants}, Mathematical
  Research Postcards \textbf{1} (2021), no.~2.

\bibitem[MN08]{MR2457839}
Scott Morrison and Ari Nieh, \emph{On {K}hovanov's cobordism theory for
  {$\mathfrak{su}_3$} knot homology}, J. Knot Theory Ramifications \textbf{17}
  (2008), no.~9, 1121--1173. \MR{2457839}

\bibitem[MV07]{MR2336253}
Marco Mackaay and Pedro Vaz, \emph{The universal {${\rm sl}_3$}-link homology},
  Algebr. Geom. Topol. \textbf{7} (2007), 1135--1169. \MR{2336253}

\bibitem[Ras10]{MR2729272}
Jacob Rasmussen, \emph{Khovanov homology and the slice genus}, Invent. Math.
  \textbf{182} (2010), no.~2, 419--447. \MR{2729272}

\bibitem[Sch25]{MR4873797}
Dirk Sch\"utz, \emph{On an integral version of the {R}asmussen invariant},
  Michigan Math. J. \textbf{75} (2025), no.~1, 65--88. \MR{4873797}

\bibitem[Wu09]{MR2509322}
Hao Wu, \emph{On the quantum filtration of the {K}hovanov-{R}ozansky
  cohomology}, Adv. Math. \textbf{221} (2009), no.~1, 54--139. \MR{2509322}

\end{thebibliography}
\bibliographystyle{amsalpha}

\end{document}